\newtheorem{theorem}{Theorem}
\newtheorem{lemma}[theorem]{Lemma}
\newtheorem{corollary}[theorem]{Corollary}
\newtheorem{proposition}[theorem]{Proposition}
\theoremstyle{definition}
\newtheorem{example}[theorem]{Example}
\newtheorem{remark}[theorem]{Remark}
\newtheorem{definition}[theorem]{Definition}
\newcommand{\cC}{\mathscr{C}}
\newcommand{\cK}{\mathscr{K}}
\newcommand{\cD}{\mathscr{D}}
\newcommand{\cB}{\mathscr{B}}
\newcommand{\cZ}{\mathscr{Z}}
\newcommand{\csf}{\text{-}\mathrm{csf}}
\newcommand{\lmod}{\text{-}\mathrm{mod}}
\newcommand{\csfcat}{\mathfrak{M}^{\mathrm{dg}}}
\newcommand{\tworep}{\text{-}\mathfrak{mod}^{\mathrm{pre}}}
\newcommand{\dgmod}{\text{-}\mathrm{mod}^{\mathrm{dg}}}
\newcommand*\cocolon{
        \nobreak
        \mskip6mu plus1mu
        \mathpunct{}
        \nonscript
        \mkern-\thinmuskip
        {:}%
        \mskip2mu
        \relax
}
\newcommand{\adj}[4]{#1\colon #2\rightleftarrows #3\cocolon #4}
\newcommand{\shift}[1]{\langle #1\rangle}
\newcommand{\diag}{\operatorname{diag}}
\newcommand{\Hom}{\mathrm{Hom}}
\newcommand{\End}{\mathrm{End}}
\newcommand{\rad}{\operatorname{rad}}
\newcommand{\Ann}{\operatorname{Ann}}
\newcommand{\op}{\mathrm{op}} 
\newcommand{\Spec}{\operatorname{Spec}}
\newcommand{\MaxSpec}{\operatorname{MaxSpec}}
\newcommand{\rF}{\mathrm{F}}
\newcommand{\rG}{\mathrm{G}}
\newcommand{\rH}{\mathrm{H}}
\newcommand{\rX}{\mathrm{X}}
\newcommand{\rY}{\mathrm{Y}}
\newcommand{\A}{\mathcal{A}}
\newcommand{\C}{\mathcal{C}}
\def\D{{\mathcal{D}}}
\newcommand{\E}{\mathcal{E}}
\newcommand{\I}{\mathcal{I}}
\newcommand{\J}{\mathcal{J}}
\newcommand{\K}{\mathcal{K}}
\def\L{{\mathcal{L}}}
\newcommand{\R}{\mathcal{R}}
\newcommand{\Y}{\mathcal{Y}}
\newcommand{\Z}{\mathcal{Z}}
\newcommand{\del}{\partial}
\newcommand{\id}{\mathrm{id}}
\newcommand{\one}{\mathbbm{1}}
\newcommand{\bfM}{\mathbf{M}}
\newcommand{\bfK}{\mathbf{K}}
\newcommand{\bfG}{\mathbf{G}}
\newcommand{\bfP}{\mathbf{P}}
\newcommand{\bfN}{\mathbf{N}}
\newcommand{\bfR}{\mathbf{R}}
\newcommand{\bfC}{\mathbf{C}}
\newcommand{\bfI}{\mathbf{I}}
\newcommand{\bfJ}{\mathbf{J}}
\newcommand{\tone}{\mathtt{1}}
\newcommand{\tzero}{\mathtt{0}}
\newcommand{\ti}{\mathtt{i}}
\newcommand{\tj}{\mathtt{j}}
\newcommand{\tk}{\mathtt{k}}
\newcommand{\tl}{\mathtt{l}}
\newcommand{\tn}{\mathtt{n}}
\newcommand{\tI}{\mathtt{I}}
\newcommand{\tJ}{\mathtt{J}}
\newcommand{\tE}{\mathtt{E}}
\newcommand{\tX}{\mathtt{X}}
\newcommand{\steq}{-}
\newcommand{\ov}[1]{\overline{#1}}
\newcommand{\mat}[1]{\left(\begin{smallmatrix}#1\end{smallmatrix}\right)}
\numberwithin{equation}{subsection}
\numberwithin{theorem}{subsection}
\begin{document}

\title[Dg cell 2-representations]{Differential graded cell 2-representations}
\author{Robert Laugwitz}
\address{School of Mathematical Sciences,
University of Nottingham, University Park, Nottingham, NG7 2RD, UK}
\email{robert.laugwitz@nottingham.ac.uk}

\author{Vanessa Miemietz}
\address{
School of Mathematics, University of East Anglia, Norwich NR4 7TJ, UK}
\email{v.miemietz@uea.ac.uk}
\urladdr{https://www.uea.ac.uk/~byr09xgu/}

\date{January 17, 2025}

\begin{abstract}
This article develops a theory of cell combinatorics and cell $2$-rep\-resentations for differential graded $2$-categories. We introduce two types of partial preorders, called the strong and weak preorder. We then analyse and compare them. The weak preorder is more easily tractable, while the strong preorder is more closely related to the combinatorics of the associated homotopy $2$-representations. To each left cell, we associate a maximal ideal spectrum, and each maximal ideal gives rise to a differential graded cell $2$-representation. We prove that any strong cell is contained in a weak cell and that there is a bijection between the corresponding maximal ideal spectra. Finally, we classify weak and strong cell $2$-representations for dg $2$-categories of projective bimodules over finite-dimensional differential graded algebras.
\end{abstract}

\subjclass[2020]{18N10, 18N25, 16E45}
\keywords{2-category, 2-representation, differential graded category, pretriangulated category, cell 2-representation}

\maketitle

\tableofcontents

\section{Introduction}\label{s0}

\subsection{Motivation}
Traditionally, representation theory studies algebras via the actions of their elements as endomorphisms of vector spaces. Categorification lifts such actions to a higher categorical level, by replacing elements by $1$-morphisms, which now act as functors on categories, and adding an additional layer of $2$-morphisms, acting via natural transformations. One recovers the original structures by passing to Grothendieck groups. Some of the most important examples of categorification in representation theory include Soergel bimodules, categorifying Hecke algebras, and Kac--Moody $2$-categories, categorifying quantum groups. Studying these examples has led to tremendous progress in representation theory, as evidenced by solutions to long-standing conjectures, see e.g. \cite{CR,EW,W}. 

Systematic studies of categorical representations, or $2$-representations, emerged from two angles. One is the study of module categories for fusion or, more generally, abelian tensor categories, as developed by Etingof--Ostrik and others, see \cite{EGNO}. Many $2$-categories arising in categorification in representation theory are, however, not abelian, and passing to their abelianisations sacrifices the existence of adjunctions.  This motivates the second angle, namely the development of a $2$-representation theory encompassing these classes of examples, which was initiated in \cite{MM1} and focuses on the additive structures of the relevant $2$-categories and $2$-representations. This has led to a theory of finitary $2$-representations of finitary $2$-categories, see also \cite{MM2,MM3,MM5,MMMZ,MMMTZ} among others. The definition of a finitary $2$-category imposes strong finiteness conditions, and should be thought of as the $2$-categorical analogue of a finite-dimensional algebra. Relaxations of these conditions, allowing for the treatment of larger examples, were studied in \cite{Mac1,Mac2}.

However, there is emerging evidence that many features of categorification require the use of triangulated structures. Examples include categorified braid groups and categorical braid group actions \cite{ST,KS,Ro1,Ro2, RZ}, where braid relations hold only up to homotopy. Similarly, \cite{MMV} describes triangulated $2$-representations of Soergel bimodules in affine type $A$, which do not arise as triangulated hulls of finitary $2$-representations.   Further examples of categorification on the level of triangulated categories include, e.g., \cite{BFK,CR,CL,JY}. This rich supply of examples motivates the development of a $2$-representation theory allowing $2$-categories to act on homotopy categories. 

A technical obstacle when working directly with triangulated $2$-categories and $2$-rep\-resentations is the lack of functoriality of crucial constructions such as taking cones, which motivates the use of further enriched structures. One categorically appealing choice here is to enrich in $\infty$-categories such as simplicial categories (e.g. the dg nerves of dg categories), but keeping track of higher morphisms is not conducive to doing explicit computations in examples. Thus, our compromise is to enrich by the $1$-category of (pretriangulated) dg categories and to consider pretriangulated $2$-categories and $2$-representations. This approach provides the desired functoriality properties while maintaining concreteness for applications. Indeed, there are examples of categorification using very explicit constructions of dg algebras and bimodules, such as dg $2$-categories coming from quantum group categorification at the complex parameter $i$ (see e.g. \cite{EQ1, EQ2, ElQ, EL, KQ, KT, Ti1}) or contact geometry (see e.g.\ \cite{Ti2, Ti3}).

A systematic treatment of pretriangulated $2$-representations of dg $2$-categories was initiated in \cite{LM2}. In the latter article, we introduce the relevant notions and study cyclic pretriangulated $2$-representations via dg algebra $1$-morphisms, inspired by the analogous theory for tensor categories in \cite{EGNO}. In the present article, we continue to work with the same setup.
 
\subsection{Cell theory in finitary $2$-representation theory}
Classically, Green's cells for semigroups \cite{Gr} and Kazhdan--Lusztig cells for Hecke algebras \cite{KaLu}, combinatorially defined as equivalence classes with respect to certain (left, right and two-sided) partial preorders on (basis) elements, allow for a significant reduction of the problem of constructing simple representations. More precisely, to each simple representation one can associate a unique two-sided cell, its apex. Moreover, simple representations with a fixed apex are in bijection with simple representations associated to the intersection of certain right and left cells (resulting in a so-called $H$-cell) inside this apex. This allows for a vast reduction in the complexity of computations. 

The ideas of classical cell theory were generalised to finitary $2$-representation theory in \cite{MM1}, and have been one of the most important tools for constructing and classifying simple $2$-representations, in the original literature referred to as \emph{simple transitive} $2$-representations, of finitary $2$-categories such as certain quotients of categorified quantum groups (see e.g.\ \cite{MM5, Mac1}) and Soergel bimodules (see e.g.\ \cite{MMMTZ}). In particular, the existence of an apex for a simple $2$-representation and (in the presence of adjunctions) the reduction of the classification of simple $2$-representations to those associated to $H$-cells were proved in \cite{ChMa} and \cite{MMMZ}, respectively.

A crucial role in the development of the theory is played by so-called \emph{cell $2$-representations}. More precisely,
any left cell $\L$, defined as an equivalence class of indecomposable $1$-morphisms with respect to a certain partial preorder,
generates a certain $2$-representation $\bfR_\L$, which turns out to have a unique maximal ideal. The cell $2$-representation is then the quotient of $\bfR_\L$ by this ideal and is simple by construction. In many cases, cell $2$-representations exhaust all simple $2$-representations, see e.g. \cite{MM5, MMZ2}. In other cases, all simple $2$-representations can be constructed from cell $2$-representations, see e.g. \cite{JM, MMZ3}.

\subsection{Cell theory for dg $2$-categories}
In this article, we generalise cell theory to pretriangulated $2$-categories, while at the same time vastly relaxing finiteness assumptions compared to \cite{MM1}. 
We introduce two natural generalisations of the partial preorder given in \cite{MM1} to dg indecomposable $1$-morphisms, which we call the \emph{strong} and \emph{weak} partial preorders, denoting them by the symbols $\leq$ and $\preceq$. This leads to two different notions of (left) cells, and two different $2$-representations generated by a (strong or weak) cell $\L$, denoted by  $\bfR_\L^\leq$ and  $\bfR_\L^\preceq$, respectively. 
The weak order is defined in terms of containment of left (dg) $2$-ideals generated by the identities on the respective dg indecomposable $1$-morphisms. In contrast, the strong order is defined by containment of dg $2$-subrepresentations generated by the respective dg indecomposable $1$-morphisms inside principal (i.e.\ representable) pretriangulated $2$-representations. 

Both notions of cell structure have their advantages.  The weak cells are often easier to determine,  see e.g. Section \ref{secweakCA}, because, informally speaking, weak cells do not see the differential (see Remark \ref{explicit}) and are instead determined by the underlying $\Bbbk$-linear $2$-category. On the other hand, strong cells are closely related to a notion of triangulated cell, which we develop in Section \ref{sectriangcell}. In particular, assuming the $2$-subcategory whose $2$-morphisms consist of all cycles is locally Krull--Schmidt,  (non-acylic) strong and  triangulated left cells coincide, see Corollary \ref{strongequalstriang}.

Moreover, relaxing finiteness assumptions leads to loss of uniqueness of cell $2$-representations associated with a fixed left cell. Instead of unique maximal ideals, we obtain maximal ideal spectra $\MaxSpec(\bfR^\leq_{\L})$, respectively $\MaxSpec(\bfR^\preceq_{\L})$. Each maximal ideal $\bfI$ (in $\MaxSpec(\bfR^\leq_{\L})$, respectively $\MaxSpec(\bfR^\preceq_{\L})$) gives rise to a (strong or weak) dg cell $2$-representation $\bfC_{\bfI}^\leq$, respectively $\bfC_{\bfI}^\preceq$, both of which are quotient-simple, in the sense of not having any non-trivial quotients. This concept of spectrum should be clearly distinguished from the Balmer spectrum for tensor triangulated categories (see e.g.\ \cite{B1, B2}). The latter considers tensor ideals, i.e. ideals generated by identities, while in our theory, it is crucial that ideals do not necessarily need to contain identities. However,
for the simple example of a dg $2$-category with only one object, whose endomorphism category consist of bounded complexes over a (finitely generated) commutative ring, our notion of spectrum recovers the usual algebro-geometric definition of the maximal ideal spectrum of the ring (Example \ref{geomex}).

One of our main results, Theorem \ref{maxspecbij}, compares the maximal ideal spectra obtained via the two constructions, and shows that given a strong left cell $\L$, which is necessarily contained in a weak left cell $\L'$, there is a bijection between $\MaxSpec(\bfR^\leq_{\L})$ and $\MaxSpec(\bfR^\preceq_{\L'})$. This bijection induces a containment of the corresponding strong cell $2$-representation in its weak counterpart (Proposition \ref{cellinclude}).

\subsection{The example of $\cC_A$}
In finitary $2$-representation theory, a prominent example is the $2$-category $\cC_A$ of projective bimodules for a finite-dimensional algebra $A$. This $2$-category should be thought of as a $2$-categorical analogue of a matrix algebra. In particular,  in a certain sense, any finitary $2$-category $\cD$ has a filtration by $2$-categories $\cD_\J$, labelled by two-sided cells $J$, such that the restriction of any simple $2$-representation with apex $\J$ to $\cD_\J$ factors through some $2$-category of the form $\cC_A$. In this sense, (cyclotomic quotients of) categorified Kac--Moody algebras are filtered by $2$-categories of projective bimodules over cyclotomic quiver Hecke algebras \cite{MM5, Mac1}. Crucially, simple $2$-representations of $\cC_A$ are cell $2$-representations and, up to equivalence, there are (for $A$ connected) precisely two of these, the trivial one on $\Bbbk\lmod$ where all projective bimodules act as zero, and the natural one on the category of projective $A$-modules \cite{MM5, MMZ2}.

These $2$-categories of projective bimodules admit a natural generalisation to the world of dg $2$-categories. For a finite-dimensional dg algebra $A$ (with a choice of primitive idempotents annihilated by the differential), we define the dg $2$-category $\cC_A$, whose $1$-morphisms are the thick closure of the identity and projective dg $A$-$A$-bimodules. For this class of examples, we completely classify both weak and strong cell $2$-representations (Theorem \ref{allweakcell2reps}, Corollary \ref{allstrongcell2reps}). In particular, in this case, strong and weak cell $2$-representations are dg equivalent. 

\subsection{Organisation of the article} Section \ref{sec:dggen} contains generalities on dg categories, before providing background on dg $2$-categories and their pretriangulated $2$-representations in Section \ref{dg2}. Section \ref{cellchapter} is the technical core of the article. Here, we define our strong and weak partial preorders in Section \ref{cellcomb} and the respective associated pretriangulated $2$-representations $\bfR_\L^\leq$ and $\bfR_\L^\preceq$. This leads to the definition of the maximal ideal spectra of both in Section \ref{secspectra} and the construction and analysis of cell $2$-representations in Section \ref{cell2section}. We consider dg idempotent completions in Section \ref{secidempcomp} and,  in Section \ref{locendosec}, assuming the existence of a generating set of $1$-morphisms with local endomorphism rings, relate the cell $2$-representations of a dg $2$-category and of its dg idempotent completion. We apply our results to dg $2$-categories obtained from finitary $2$-categories by taking bounded complexes of $1$-morphisms in Section \ref{secfinitarycomp}. Finally, we define a triangulated cell structure and make the connection to the strong cell structure in Section \ref{sectriangcell}. To conclude the article, we discuss the example of $\cC_A$ in detail and classify its weak and strong cell $2$-representations in Section \ref{CAsec}, which ends with a discussion of some explicit examples.

\subsection*{Acknowledgements}

The authors would like to thank Ben Elias, Gustavo Jasso, and Marco Mackaay for helpful conversations related to this work. We further like to thank the referees for helpful comments on the manuscript. 
R.~L. was supported by a Nottingham Research Fellowship and V.~M. was supported by EPSRC grant EP/S017216/1.

\section{Generalities}
\label{sec:dggen}

\subsection{Dg categories}\label{dgsect}

Let $\Bbbk$ be a field and let $C(\Bbbk\lmod)$ denote the symmetric monoidal category of cochain complexes of $\Bbbk$-modules. Its objects are $\mathbb{Z}$-graded $\Bbbk$-vector spaces endowed with a differential $\del$ of degree $+1$ satisfying $\del^2=0$, and whose morphisms preserve the $\mathbb{Z}$-degree and commute with the differentials. 

A category $\C$ enriched over $C(\Bbbk\lmod)$ is called a {\bf dg category} (see e.g.  \cite[Section~2.1]{Ke}, \cite[Section~1.6]{K}). We will always assume that any dg category in this article is small.

We define the dg category $\Bbbk\dgmod$ of {\bf dg $\Bbbk$-modules}  to have the same objects as $C(\Bbbk\lmod)$ but arbitrary $\Bbbk$-linear maps as morphisms. A morphism $f\colon V\to W$ has  degree $n$ if $f(V^k)\subset W^{k+n}$ for each $k\in \mathbb{Z}$. The differential of a morphism of degree $n$ is given by 
$\del(f)=\del_W\circ f-(-1)^n f\circ \del_V.$  This way, $\Bbbk\dgmod$ is enriched over $C(\Bbbk\lmod)$. It has a {\bf shift functor} given by $(V\langle 1\rangle)^k=V^{k+1}$, for $k\in \mathbb{Z}$.

For a dg category $\C$, the category $\Z(\C)$ is the $\Bbbk$-linear category of {\bf dg morphisms} with the same objects as $\C$ and $\Bbbk$-vector spaces
$$\Hom_{\Z(\C)}(X,Y)=\left\{f\in \Hom_{\C}(X,Y)\vert \deg f=0\text{ and }\del f=0 \right\} $$ as morphisms. In particular, $\Z(\Bbbk\dgmod)=C(\Bbbk\lmod)$. We use the terminology of dg idempotent, dg direct summand, dg indecomposable etc. for properties referring to or defined by morphisms in $\Z(\C)$.

If $\C$ is a dg category, we define the dg category $\C\dgmod$ of {\bf dg modules over $\C$} as the dg category of dg functors $\C^\op\to \Bbbk\dgmod$ (cf. \cite[Section 1.2]{Ke}, \cite[Section 2.2]{Or1}). The Yoneda lemma then yields a fully faithful dg functor 
$$\C\to \C\dgmod, \qquad X\mapsto X^\vee:=\Hom_\C(-,X),$$
which we refer to as a {\bf free} dg $\C$-module. Recall that for any $\C$-module $M$, there is a dg isomorphism 
$\Hom_{\C\dgmod}(X^\vee,M)\cong M(X).$

For a dg category $\C$, its {\bf dg idempotent completion} $\C^\circ$ has objects $X_e$ for any dg idempotent $e\in \End_\C(X)$ (meaning  $\deg e=0$ and $\del e=0$)  and morphism spaces
$$\Hom_{\C^\circ}(X_e,Y_f)=f\Hom_{\C}(X,Y)e.$$
Note that $\C^\circ$ is a dg category and the natural embedding $\C\hookrightarrow \C^\circ$ is a dg functor. Moreover, $(\C^\circ)^\circ$ is dg equivalent to $\C^\circ$.

We can extend a dg functor $F\colon \C\to \D$ and a natural transformation of dg functors $\phi\colon F\to G$ to the dg idempotent completions by setting 
\begin{align}\begin{split}
F^\circ \colon \C^\circ \to \D^\circ, \quad F(X_e)=F(X)_{F(e)}, \\ \phi^\circ\colon F^\circ\to G^\circ, \quad \phi^\circ_{X_e}:=G(e)\circ\phi_X\circ F(e).\end{split}\label{functorcirc}
\end{align}

\subsection{Pretriangulated categories}\label{section:pretriang}

We say that a dg category $\C$ is {\bf pretriangulated} if its Yoneda embedding into $\C\dgmod$ via $X\mapsto X^\vee=\Hom_{\C}(-,X)$ is closed under shifts and cones. In the terminology of \cite[Section 2.4]{D}, such $\C$ is called \emph{strongly} pretriangulated. Notice that we do not require $\C$ to be closed under dg direct summands. We call a full dg subcategory $\D\subseteq \C$ a {\bf pretriangulated subcategory} if it is closed under dg isomorphisms, shifts and cones, and call $\D$ {\bf thick} if it is additionally closed under all direct summands which exist in $\C$.

Let $\C$ be a pretriangulated category. 
For a set $\tX$ of objects in $\C$, we denote the {\bf thick closure} of $\tX$ in $\C$ by $\widehat{\tX}$. That is, $\widehat{\tX}$ is the smallest thick pretriangulated subcategory of $\C$ containing $\tX$. An object $X$ in $\C$ is called a {\bf (classical) generator} for $\C$ if the thick closure $\widehat{\lbrace X \rbrace}$ of $X$ is $\C$.

For a dg category $\C$, we write $\ov{\C}$ for the dg category of (one-sided) twisted complexes in $\C$. Explicitly, we let $\overline{\C}$ be the dg category whose
\begin{itemize}
\item objects are pairs $(X=\bigoplus_{m=1}^s X_m, \alpha=(\alpha_{k,l})_{k,l})$ where the $X_m$ are shifts of objects in $\C$ and $\alpha_{k,l}\in \Hom_\C(X_l,X_k)$, $\alpha_{k,l} = 0$ for all $k\geq l$ such that the matrix $\del \cdot \mathtt{I}_s +\left((\alpha_{kl})_*\right)_{kl}$ acts as a differential on $\bigoplus_{m=1}^s X_m^\vee$ in  $\C\dgmod$ (here $\mathtt{I}_s$ denotes the identity matrix);
\item morphisms are matrices of morphisms between the underlying objects, with the differential of a homogeneous morphism
$$\gamma = (\gamma_{n,m})_{n,m} \colon \left(\bigoplus_{m=1}^s X_m, \alpha=(\alpha_{k,l})_{k,l}\right)\longrightarrow \left(\bigoplus_{n=1}^t Y_n, \beta=(\beta_{k,l})_{k,l}\right)$$
given by
$$ \partial\left((\gamma_{n,m})_{n,m} \right):=  (\partial \gamma_{n,m}+(\beta\gamma)_{n,m}-(-1)^{\deg \gamma}(\gamma\alpha)_{n,m})_{n,m}.$$
\end{itemize}

Note that here $\bigoplus_{i=m}^s X_m$ denotes an ordered list of objects rather than a direct sum internal to $\C$. This way, $\ov{\C}$ can be given the explicit additive structure
$$(X,\alpha)\oplus (Y,\beta):=\left(X\oplus Y,\begin{pmatrix}
\alpha&0\\0&\beta
\end{pmatrix}\right),$$
where $X\oplus Y$ is simply the concatenation of ordered lists. This additive structure is strict, in the sense that $(X\oplus Y)\oplus Z=X\oplus (Y\oplus Z)$.

Observe that $\ov{\C}$ is a pretriangulated category, in fact it is the smallest pretriangulated category containing $\C$, see \cite[Section~1]{LO}, \cite[Section 3.2]{AL}, and is dg equivalent to $\C\csf$, the dg category of compact semi-free dg $\C$-modules, cf. \cite[Section 2.3]{Or2}.

For a dg morphism $f\colon X\to Y$ in $\ov{\C}$, with $X=(\bigoplus_{i=1}^tX_i,\alpha)$, $Y=(\bigoplus_{i=1}^tY_i,\beta)$, the {\bf cone} $C_f$ of $f$ is given by 
$$C_f=\Big(Y\oplus X\shift{1}, \mat{\beta& -f\\ 0&\alpha}\Big),$$
and there are natural dg morphisms
$C_f\shift{-1}\to X$ and $Y\to C_f,$
such that pre- (respectively, post-) composition with $f$ yields a null-homotopic morphism.

If $F\colon \C\to \D$ is a dg functor, we obtain an induced dg functor 
\begin{align}\label{functorov}
\ov{F}\colon \ov{\C}\to \ov{D}, \quad \ov{F}\Big(\bigoplus_{i=1}^t X_i,\alpha\Big)=\Big(\bigoplus_{i=1}^t F(X_i),F(\alpha)\Big),
\end{align}
by applying $F$ component-wise to $\alpha$ and to morphisms in $\ov{\C}$. Observe that $\ov{G\circ F}=\ov{G}\circ\ov{F}$ for composable dg functors $F\colon \C\to \D$, $G\colon \D\to \E$. Further, a natural transformation $\tau\colon F\to G$ induces a natural transformation $\ov{\tau}\colon \ov{F}\to \ov{G}$ via the diagonal matrix
\begin{align}
\tau_{(\oplus_{i=1}^t X_i,\alpha)}=\diag(\tau_{X_1},\ldots, \tau_{X_t}).\end{align}
In total, we obtain a dg $2$-functor $\ov{(-)}$ mapping dg categories $\C$ to pretriangulated categories $\ov{\C}$.

For future reference, we record the following basic lemma on ideals in dg categories, see \cite[Lemmas 2.1, 2.2, 2.3]{LM2}.

\begin{lemma}\label{easyideals}
Let $\C$ be a dg category which is a full subcategory of a dg category $\C'$. Let $\I$ be a dg ideal in $\C$.
\begin{enumerate}[(a)]
\item\label{idealtrivial}The restriction to $\C$ of the dg ideal generated by $\I$ in $\C'$ is equal to $\I$.
\item\label{idealonoverline} If $\C$ is a dg category equivalent to $\widehat{\{X\}}$ for some $X\in \C$, then $\I$ is generated by
the subset $\I\cap\End_{\C}(X)$.
\item \label{quotientpretri1}
If $\J$ is a dg ideal in $\ov{\C}$, then $\J=\ov{\left.\J\right|_{\C}}$, where $\left.\J\right|_{\C}$ is the restriction of $\J$ to $\C$.
\item \label{quotientpretri2}
There is a fully faithful dg functor $\ov{\C}/\ov{\I}\hookrightarrow \ov{\big(\C/\I\big)}$.
\item \label{quotientpretri3}
Assume that $\I$ has the property that if $\del(f)$ is in $\I$, then $f$ itself is in $\I$. Then the dg functor from \eqref{quotientpretri2} becomes a dg equivalence. 
\end{enumerate}
\end{lemma}

Moreover, we recall \cite[Lemma 2.4]{LM2} for later reference.

\begin{lemma}\label{Ccirctri}
If $\C$ is a pretriangulated category, then the dg idempotent completion $\C^\circ$ is also pretriangulated and $\iota\colon \C\to\C^\circ$ displays $\C$ as a full pretriangulated subcategory. In particular, this implies $\ov{\C}^\circ\simeq\ov{\ov{\C}^\circ}$ for any dg category $\C$.
\end{lemma}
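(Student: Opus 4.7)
The plan is to show closure of the Yoneda image of $\C^\circ$ in $\C^\circ\dgmod$ under shifts and cones, by reducing each construction to the corresponding one for $\C$. For a dg idempotent $e\in\End_\C(X)$, the pretriangulation of $\C$ supplies $X\shift{1}\in\C$ together with a natural dg isomorphism $\Hom_\C(-,X\shift{1})\cong \Hom_\C(-,X)\shift{1}$, and since this comes from an application of the shift functor (a dg functor), it sends the dg idempotent $e$ to a dg idempotent $\tilde e\in\End_\C(X\shift{1})$. I would take $(X\shift{1})_{\tilde e}$ to be the shift of $X_e$ in $\C^\circ$; the required module-level identity $\Hom_{\C^\circ}(-,(X\shift{1})_{\tilde e})\cong \Hom_{\C^\circ}(-,X_e)\shift{1}$ then follows by cutting the original shift isomorphism by the corresponding idempotent actions.

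For cones, suppose $g\colon X_e\to Y_f$ lies in $\Z(\C^\circ)$, which amounts to $g\in\Hom_\C(X,Y)$ with $\deg g=0$, $\del g=0$, and $g=fge$. I would form the cone $C_g$ of $g\colon X\to Y$ in $\C$, whose underlying data are $Y\oplus X\shift{1}$ with twisting matrix $\mat{0 & -g \\ 0 & 0}$ ($g$ here viewed as a degree-$1$ map $X\shift{1}\to Y$), and endow it with the diagonal endomorphism $h=\mat{f & 0 \\ 0 & \tilde e}$. The relation $h^2=h$ is immediate; the condition $\del h=0$ reads on the diagonal as $\del f=\del\tilde e=0$ and on the sole off-diagonal entry as $fg=g\tilde e$, which under the shift identification is the identity $fg=ge$, a direct consequence of $g=fge$ together with $e^2=e$, $f^2=f$. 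Thus $h$ is a dg idempotent, $(C_g)_h\in\C^\circ$, and inspecting Yoneda restricted to $\C$ shows it is the required cone. This off-diagonal identity, carried out with due attention to the degree shift on the $X\shift{1}$-summand, is the main technical obstacle.

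The embedding $\iota\colon \C\to\C^\circ$ is fully faithful by the very definition of $\C^\circ$, and both constructions above specialise to the shifts and cones in $\C$ when the idempotents $e$ and $f$ are identities, which displays $\C$ as a full pretriangulated subcategory. For the final assertion, apply the preceding to the pretriangulated $\ov{\C}$ to conclude that $\ov{\C}^\circ$ is pretriangulated. Any pretriangulated dg category $\D$ is closed under finite direct sums (obtained as cones of zero maps between suitable shifts) and hence, by induction on the length of twisted complexes, the inclusion $\D\hookrightarrow\ov{\D}$ is essentially surjective and thus an equivalence; applied to $\D=\ov{\C}^\circ$ this yields $\ov{\C}^\circ\simeq\ov{\ov{\C}^\circ}$.
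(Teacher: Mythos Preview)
Your argument is correct. The paper does not actually prove this lemma; it merely records it as a citation to \cite[Lemma~2.4]{LM2}, so there is no in-paper proof to compare against. Your direct approach---realising the shift of $X_e$ as $(X\shift{1})_{\tilde e}$ and the cone of $g\colon X_e\to Y_f$ as $(C_g)_h$ with $h=\mat{f&0\\0&\tilde e}$---is the natural construction, and your verification that $\del h=0$ via $fg=ge$ is exactly the point that needs checking. One small remark: when you say ``inspecting Yoneda restricted to $\C$ shows it is the required cone'', this is justified because restriction along $\C\hookrightarrow\C^\circ$ induces an equivalence $\C^\circ\dgmod\simeq\C\dgmod$ (a $\C^\circ$-module is determined by its values on $\C$ since $M(X_e)$ is the image of the dg idempotent $M(e)$ on $M(X)$); it would strengthen the write-up to make this explicit rather than leaving it implicit.
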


Let $\C$ be a dg category. We denote the {\bf homotopy category} of $\C$ by $\K(\C)$, which has the same objects as $\C$ but whose morphism spaces are given by
$$\Hom_{\K(\C)}(X,Y)=H^0(\Hom_{\C}(X,Y)).$$
Recall that $\K(\C)$ is a triangulated category provided that $\C$ is pretriangulated  \cite[\S 1, Proposition~2]{BK}, \cite[Section~2.2]{Ke1}. Any dg functor $F\colon\C\to \D$ induces a functor $\K(F)\colon \K(\C)\to \K(\D)$, which is a triangle functor  \cite[\S 3]{BK} provided that $\C$ and $\D$ are pretriangulated.

\section{Dg \texorpdfstring{$2$}{2}-categories and \texorpdfstring{$2$}{2}-representations}\label{dg2}

In this section, we recall the main definitions  surrounding dg $2$-categories and define various notions associated pretriangulated $2$-representations.

\subsection{Dg \texorpdfstring{$2$}{2}-categories}\label{dg2cats}

A $2$-category $\cC$ is a {\bf dg $2$-category} if, for any pair of objects $\ti,\tj\in \cC$, the categories $\cC(\ti,\tj)$ are   dg categories and horizontal composition is a dg functor.

For a dg $2$-category $\cC$, on associates the dg $2$-category $\ov{\cC}$ of {\bf (one-sided) twisted complexes}. It 
\begin{itemize}
\item has the same objects as $\cC$;
\item its $1$-morphism categories are given by $\overline{\cC(\ti,\tj)}$;
\item horizontal composition of two $1$-morphisms $\rX=(\bigoplus_{m=1}^s\rF_m,\alpha)\in \overline{\cC(\tk,\tl)}$ and $X'=(\bigoplus_{n=1}^t \rF'_n,\alpha')\in\overline{\cC(\tj,\tk)}$ is given by
\begin{align}\label{orderingconvention}
\rX\circ \rX'=\left( \bigoplus_{(m,n)}\rF_m\rF_n', (\delta_{k',l'}\alpha_{k,l}\circ_0\id_{\rF'_{k'}}+\delta_{k,l}\id_{\rF_k}\circ_0\alpha'_{k',l'})_{(k,k'),(l,l')} \right),
\end{align}
where pairs $(m,n)$ are ordered lexicographically; 
\end{itemize}
This composition of $1$-morphisms gives a strict operation (for details, see \cite[Proposition 3.5]{LM}).
A dg $2$-category $\cC$ is called {\bf pretriangulated} if the embedding $\cC\hookrightarrow \ov{\cC}$ is part of a dg biequivalence.
Moreover, we say that a dg $2$-category $\cC$ {\bf  has generators} if each $\ov{\cC(\ti,\tj)}$ has a generator.

For a dg $2$-category $\cC$, the {\bf dg idempotent completion} $\cC^\circ$ is the dg $2$-category on the same objects as $\cC$, but with morphism categories $\cC^{\circ}(\ti,\tj)=\cC(\ti,\tj)^{\circ}$, the completion under dg idempotents, for any two objects $\ti,\tj$.  We note that Lemma \ref{Ccirctri} implies that if $\cC$ is pretriangulated, then $\cC^\circ$ is pretriangulated.

Given a dg $2$-category $\cC$, we define the $2$-category $\cZ\cC$, which has the same objects as $\cC$ and morphism categories $\Z(\cC(\ti,\tj))$.
Similarly, we define the {\bf homotopy $2$-category} $\cK\cC$ associated to $\cC$ as having the same objects as $\cC$ and morphism categories $\K(\cC(\ti,\tj))$.

\subsection{Pretriangulated \texorpdfstring{$2$}{2}-representations and their homotopy \texorpdfstring{$2$}{2}-representations}\label{dg2reps}

We now introduce pretriangulated $2$-representations, the class of $2$-representations studied in this article and \cite{LM2}.
As a target for pretriangulated $2$-representations, we define the dg $2$-category $\csfcat$ as the $2$-category whose
\begin{itemize}
\item objects are  pretriangulated categories $\C$;
\item $1$-morphisms  are dg functors;
\item $2$-morphisms are all morphisms between such dg functors.
\end{itemize}

By a {\bf pretriangulated $2$-representation} of a dg $2$-category $\cC$, we mean a $2$-functor $\bfM\colon \cC \to \csfcat$ such that the component functors from $\cC(\ti,\tj)$ to $\csfcat(\bfM(\ti),\bfM(\tj))$ are dg functors.
Explicitly, $\bfM$ maps 
\begin{itemize}
\item any object $\ti\in \cC$ to a pretriangulated category $\bfM(\ti)$,
\item any $1$-morphism $\rF \in \cC(\ti,\tj)$ to a dg functor $\bfM(\rF)\colon \bfM(\ti)\to\bfM(\tj)$,
\item any $2$-morphism $\alpha\colon  \rF\to\rG \in \cC(\ti,\tj)$ to a morphism 
$\bfM(\alpha)\colon \bfM(\rF)\to\bfM(\rG)$ of dg functors.
\end{itemize}
Moreover, 
$\bfM(\rG\rF)=\bfM(\rG)\bfM(\rF)$ and $\bfM(\one_\ti)=\one_{\bfM(\ti)}$, for $1$-morphisms $\rF, \rG$ and objects $\ti$.

If we have fixed a $2$-representation $\bfM$, we often write $\rF (X)$ or $\rF\, X$ instead of $\bfM(\rF)(X)$, and $\rF(f)$ instead of $\bfM(\rF)(f)$,  for any object $X$ and any morphism $f$ in $\bfM(\ti).$

A {\bf morphism of pretriangulated $2$-representations } $\Phi\colon\bfM\to\bfN$ consists of the data of 
\begin{itemize}
\item  a dg functor $\Phi_\ti\colon \bfM(\ti)\to \bfN(\ti)$ for each $\ti\in\cC$;
\item natural dg isomorphisms $\eta_\rF\colon \Phi_\tj\circ \bfM(\rF)\longrightarrow \bfN(\rF)\circ \Phi_\ti$ for each  $\rF\in\cC(\ti,\tj)$ such that, for composable $\rF,\rG$,
\begin{equation*}
\eta_{\mathrm{F}\mathrm{G}}=(\mathrm{id}_{\mathbf{N}(\mathrm{F})}\circ_0\eta_{\mathrm{G}})\circ_1
(\eta_{\mathrm{F}}\circ_0\mathrm{id}_{\mathbf{M}(\mathrm{G})}).
\end{equation*}
\end{itemize}

The collection of all pretriangulated $2$-representations of a dg $2$-category $\cC$, together with morphisms of pretriangulated $2$-representations and modifications satisfying the same condition as in \cite[Section 2.3]{MM3}, forms a dg $2$-category, which we denote by $\cC\tworep$.

\begin{definition}
The {\bf annihilator} $\Ann_{\cC}(\bfM)$ of a pretriangulated $2$-representation $\bfM$ is the dg $2$-ideal of $\cC$ consisting of those $2$-morphisms $\alpha$ with $\bfM(\alpha)=0$.
\end{definition}

We call a pretriangulated $2$-representation $\bfM$ {\bf acyclic} if any $X\in \bfM(\ti)$ is acyclic (that is, if there exists an endomorphism $f$ of $X$ with $\del(f)=\id_X$), for any object $\ti$.

We recall that any pretriangulated $2$-representation $\bfM$ of a dg $2$-category $\cC$ naturally extends to a pretriangulated $2$-representation $\bfM$ of $\ov{\cC}$. One easily verifies the following statement. 

\begin{lemma}\label{2rep-ov}
Extension from $\cC$ to $\ov{\cC}$ and restriction from $\ov{\cC}$ to $\cC$ define mutually inverse dg biequivalences between $\cC\tworep$ and $\ov{\cC}\tworep$.
\end{lemma}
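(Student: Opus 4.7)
The plan is to construct a restriction dg $2$-functor $\res\colon \ov{\cC}\tworep \to \cC\tworep$ by pullback along the embedding $\iota\colon \cC\hookrightarrow \ov{\cC}$, and an extension dg $2$-functor $\mathrm{ext}\colon \cC\tworep\to \ov{\cC}\tworep$, then verify they are mutually quasi-inverse.

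The extension relies on the observation that for each $\ti,\tj\in \cC$, the target of the component dg functor $\bfM_{\ti,\tj}\colon \cC(\ti,\tj)\to \csfcat(\bfM(\ti),\bfM(\tj))$ is itself pretriangulated: since $\bfM(\tj)$ is pretriangulated, cones and shifts of natural transformations between dg functors valued in $\bfM(\tj)$ can be formed pointwise. Using the universal property of $\ov{\cC(\ti,\tj)}=\ov{\cC}(\ti,\tj)$ as the smallest pretriangulated category containing $\cC(\ti,\tj)$, $\bfM_{\ti,\tj}$ extends uniquely (up to canonical dg natural equivalence) to a dg functor $\overline{\bfM}_{\ti,\tj}\colon \ov{\cC}(\ti,\tj)\to \csfcat(\bfM(\ti),\bfM(\tj))$. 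Explicitly, $\overline{\bfM}$ sends a twisted complex $(\bigoplus_m \rF_m,\alpha)$ to the dg functor $X\mapsto (\bigoplus_m \bfM(\rF_m)(X),\bfM(\alpha)_X)$ evaluated inside $\bfM(\tj)$ as an iterated cone. Compatibility with horizontal composition and identities assembles these component dg functors into a $2$-functor $\overline{\bfM}\colon \ov{\cC}\to \csfcat$; the analogous construction on morphisms and modifications, obtained by applying $\ov{(-)}$ in the appropriate hom-categories as in \eqref{functorov}, upgrades this to a dg $2$-functor $\mathrm{ext}$.

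For mutual quasi-inverseness, $\res\circ \mathrm{ext}$ applied to $\bfM$ recovers $\bfM$ up to canonical dg isomorphism, since on one-term twisted complexes $(\rF,0)$ the extension is forced to agree with $\bfM(\rF)$. Conversely, given $\bfN\colon \ov{\cC}\to \csfcat$ and a twisted complex $\rX=(\bigoplus_m \rF_m,\alpha)$, the value $\bfN(\rX)$ must---because $\bfN$ is a dg $2$-functor preserving the cone/shift structure present in $\ov{\cC}$---coincide up to canonical dg isomorphism with the iterated cone/shift of the $\bfN(\rF_m)$ inside $\csfcat(\bfN(\ti),\bfN(\tj))$, which is precisely $\overline{\res(\bfN)}(\rX)$. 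This produces a natural dg equivalence $\bfN\simeq \mathrm{ext}(\res(\bfN))$, yielding the desired dg biequivalence.

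The main obstacle is the verification that the componentwise extension is compatible with horizontal composition of $1$-morphisms in $\ov{\cC}$ as defined by \eqref{orderingconvention} and with the interchange law for $2$-morphisms. This requires matching the explicit combinatorial formula for horizontal composition of twisted complexes against the pointwise formation of cones of composed dg functors; the strict associativity of composition in $\ov{\cC}$, emphasised after \eqref{orderingconvention}, ensures that the resulting $2$-functorial coherence data fit together.
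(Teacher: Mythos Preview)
The paper does not supply a proof of this lemma: it is stated as a recalled fact (note the sentence ``We recall that any pretriangulated $2$-representation \ldots'' immediately preceding it), with the implicit reference being \cite{LM2}. There is therefore no paper proof to compare against.

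Your outline is the standard argument and is correct in spirit. The key observation---that $\csfcat(\bfM(\ti),\bfM(\tj))$ is pretriangulated because the target $\bfM(\tj)$ is, so that the component functor $\bfM_{\ti,\tj}$ extends to $\ov{\cC}(\ti,\tj)$---is exactly what drives the construction. You are also right to flag compatibility with horizontal composition \eqref{orderingconvention} as the point requiring care: the explicit formula for $\ov{\bfM}$ on a twisted complex makes this a direct (if tedious) check, and the strictness of horizontal composition in $\ov{\cC}$ is what makes the resulting $2$-functor strict rather than merely pseudo. One small clarification: for the converse direction you appeal to $\bfN$ ``preserving the cone/shift structure present in $\ov{\cC}$'', but this is not an assumption on $\bfN$---it follows automatically because dg functors send the universal cone diagrams in $\ov{\cC}$ to cone diagrams in the pretriangulated target, so any pretriangulated $2$-representation of $\ov{\cC}$ is determined on objects of $\ov{\cC}(\ti,\tj)$ by its values on $\cC(\ti,\tj)$. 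With that understood, your sketch is complete.
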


For any dg $2$-category $\cC$  and object $\ti\in \cC$, the {\bf $\ti$-th principal pretriangulated $2$-representation} $\bfP_\ti$ is the pretriangulated $2$-representation which maps 
\begin{itemize}
\item any object $\tj$ to $\overline{\cC(\ti, \tj)}$,
\item any $1$-morphism $\rF\in \cC(\tj, \tk)$ to the dg functor from $\overline{\cC(\ti, \tj)}$ to $\overline{\cC(\ti, \tk)}$ induced by composition with $\rG$, 
\item any $2$-morphism to the induced dg morphism of functors.
\end{itemize}

We can further extend pretriangulated $2$-representations to the dg idempotent completion $\cC^\circ$ of $\cC$. Indeed, for $\bfM\in \cC\tworep$, we can define $\bfM^\circ \in \cC\tworep$ as given by $\bfM^\circ(\ti)=\bfM(\ti)^\circ$ and extend the dg functors $\bfM(\rF)$ to $\bfM^{\circ}(\rF)$ using \eqref{functorcirc}. The resulting pretriangulated $2$-representation $\bfM^\circ$ extends to a pretriangulated $2$-representation of $\cC^\circ$, which sends $\rF_e$, for $e\colon \rF\to \rF$ a dg idempotent in $\cC(\ti,\tj)$, to the dg functor $\bfM^\circ(\rF_e)\colon \bfM^\circ(\ti)\to \bfM^\circ(\tj)$ given by
$$X\mapsto \bfM^\circ(\rF_e)(X)=(\bfM(\rF)(X))_{\bfM(e)_X}, \qquad f\mapsto \bfM(e)_Y\circ \bfM(\rF)(f)\circ \bfM(e)_X.$$
Here $X,Y$ are objects and $f\colon X\to Y$ is a morphism in $\bfM^\circ(\ti)$. Functoriality of $\bfM(\rF_e)$ follows from naturality of $\bfM(e)$. We recall the following lemma.

\begin{lemma}\cite[Lemma 3.6]{LM2}\label{extendtocirc}
There is a dg $2$-functor 
$$(-)^\circ\colon \cC\tworep \to \cC^\circ \tworep,$$
where the assignments on pretriangulated $2$-representations, morphisms, and modifications extend the given structures from $\cC$ to $\cC^\circ$.
\end{lemma}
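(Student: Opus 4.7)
The plan is to unpack the assignment $\bfM\mapsto \bfM^\circ$ layer by layer and verify each $2$-functor axiom, at every stage reducing to the corresponding property of the dg $2$-functor $\bfM$ combined with the defining relations of dg idempotent completion. First, I would confirm that for each dg idempotent $e\in \End_{\cC(\ti,\tj)}(\rF)$ the prescription
$$X\mapsto (\bfM(\rF)X)_{\bfM(e)_X},\qquad f\mapsto \bfM(e)_Y\circ \bfM(\rF)(f)\circ\bfM(e)_X$$
really defines a dg functor $\bfM^\circ(\rF_e)\colon \bfM^\circ(\ti)\to\bfM^\circ(\tj)$. Since $e$ is a degree $0$, $\del$-closed natural endomorphism of $\bfM(\rF)$, each $\bfM(e)_X$ is a dg idempotent in $\bfM(\tj)$, so $(\bfM(\rF)X)_{\bfM(e)_X}$ lives in $\bfM^\circ(\tj)$; functoriality and the fact that morphisms of the form $pfq$ (with $p,q$ the relevant idempotents from $X,Y\in \bfM^\circ(\ti)$) land where they should, follow from naturality of $\bfM(e)$ and from $e^2=e$.

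Second, I would check strict compatibility of $\bfM^\circ$ with horizontal composition and identities of $1$-morphisms. Because $\rF_e\circ \rG_f = (\rF\rG)_{e\circ_0 f}$ in $\cC^\circ$ and $\bfM$ is already a dg $2$-functor, both $\bfM^\circ(\rF_e)\circ\bfM^\circ(\rG_f)$ and $\bfM^\circ(\rF_e\circ \rG_f)$ produce, on an object $X_p$, the subobject of $\bfM(\rF)\bfM(\rG)X$ cut out by the composite of $\bfM(\rF)(p)$, $\bfM(\rF)\bfM(f)_X$ and $\bfM(e)_{\bfM(\rG)X}$; the two descriptions agree via the identification already present for $\bfM$. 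For $2$-morphisms, a morphism $\alpha\colon \rF_e\to\rG_f$ in $\cC^\circ$ is by definition an $\alpha\colon \rF\to \rG$ in $\cC$ with $f\circ\alpha\circ e=\alpha$, so $\bfM(\alpha)_X$ automatically restricts to a map $(\bfM(\rF)X)_{\bfM(e)_X}\to (\bfM(\rG)X)_{\bfM(f)_X}$, giving $\bfM^\circ(\alpha)$; the fact that $\bfM$ is a dg $2$-functor immediately yields $\Bbbk$-linearity, compatibility with $\del$, and with both compositions of $2$-morphisms.

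Third, I would extend the construction to morphisms and modifications of pretriangulated $2$-representations. Given $\Phi=(\Phi_\ti,\eta_\rF)\colon \bfM\to\bfN$, apply \eqref{functorcirc} componentwise to obtain dg functors $\Phi^\circ_\ti\colon \bfM^\circ(\ti)\to\bfN^\circ(\ti)$, and for each $\rF_e$ define $\eta_{\rF_e}$ as the restriction of $\eta_\rF$ to the subobjects cut out on each side by the images of $e$ under $\bfM$ and $\bfN$; this is well defined because $\eta_\rF$ is natural in $\rF$, so in particular compatible with the endomorphism $e$. The hexagon axiom for $\eta_{\rF_e\circ \rG_f}$ follows from the one for $\eta_{\rF\rG}$ together with the observation that the idempotents commute past $\eta$ by naturality. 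Modifications are handled in exactly the same way by restriction. Finally, $2$-functoriality of $(-)^\circ$ on $\tworep$-categories (preservation of identities, composition, and dg enrichment on the $2$-hom complexes) is immediate, since every structure map of $\Phi^\circ$ is obtained from the corresponding one for $\Phi$ by pre- and post-composition with fixed idempotents.

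The only real obstacle is bookkeeping: one has to track, for each $1$-morphism written as $\rF_e$, precisely which subfunctor of $\bfM(\rF)$ is being cut out, and verify that these subfunctors behave correctly under horizontal composition of idempotents in $\cC^\circ$. Once a consistent notation is in place, each axiom follows formally from the corresponding axiom for $\bfM$, so there is no genuine new content beyond the naturality of all data with respect to idempotents.
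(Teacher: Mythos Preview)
The paper does not give a proof of this lemma; it is recalled from \cite[Lemma 3.6]{LM2}, with the construction of $\bfM^\circ$ on objects and $1$-morphisms sketched in the paragraph immediately preceding the statement. Your proposal correctly elaborates exactly this construction and verifies the $2$-functor axioms layer by layer, which is the natural approach and matches what the cited reference does; there is nothing to correct.
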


The dg $2$-functor $(-)^\circ\colon \cC\tworep \to \cC^\circ \tworep$ can be interpreted as a left dg $2$-adjoint to restriction $\bfN\mapsto \left.\bfN\right|_{\cC}$ along the inclusion of $\cC$ into $\cC^\circ$ in the sense that there are 
 dg equivalences 
$$\adj{E}{\Hom_{\cC}(\bfM,\left.\bfN\right|_{\cC})}{\Hom_{\cC^\circ}(\bfM^\circ,\bfN)}{R},$$
for a dg idempotent complete pretriangulated $2$-representation  $\bfN$ of $\cC^\circ$ and $\bfM\in \cC\tworep$. 
In particular, $(-)^\circ$ restricts to a dg biequivalence  on the full $2$-subcategories of dg idempotent complete   pretriangulated $2$-representations.

We can associate {\bf homotopy $2$-representations} to pretriangulated $2$-representations.
Indeed, given a pretriangulated $2$-representation $\bfM$ of a dg $2$-category $\cC$, set
 $\bfK\bfM(\ti):=\K(\bfM(\ti))$ for any $\ti\in\cC$ and 
$$\bfK\bfM(\rF):= \K(\bfM(\rF))\colon \bfK\bfM(\ti)\to \bfK\bfM(\tj),$$
for any $1$-morphism in $\cC(\ti,\tj)$. A dg $2$-morphism $\alpha\colon \rF\to \rG$ induces a natural transformation $\bfK\bfM(\alpha)\colon \bfK\bfM(\rF)\to \bfK\bfM(\rG)$. By construction, $\bfK\bfM$ is an additive $2$-representation of $\cZ\cC$.
In fact,  $\bfK\bfM$ is an additive $2$-representation of the homotopy $2$-category $\cK\cC$.

\subsection{Dg ideals of  \texorpdfstring{$2$}{2}-representations and pretriangulated \texorpdfstring{$2$}{2}-subrepresentations}\label{idealsect}
In this section, we discuss various notions associated to dg ideals, required later on.

A {\bf dg ideal} of a pretriangulated $2$-representation $\bfM$ of a dg $2$-category $\cC$ consists a collection of ideals $\bfI(\ti)\subset \bfM(\ti)$, which is closed under differentials and the $\cC$-action.

\begin{definition}\label{idealgen}
Given a pretriangulated $2$-representation $\bfM$ of $\cC$, an object $\ti\in \cC$, and a morphism $f$ in $\bfM(\ti)$, we define the {\bf dg ideal $\bfI_{\bfM}(f)$ generated} by $f$ to be the smallest dg ideal of $\bfM$ such that $f$ is contained in $\bfI_{\bfM}(f)(\ti)$. 
\end{definition}

For any dg ideal $\bfI$ in a pretriangulated $2$-representation $\bfM$, we define the quotient $\bfM/\bfI$ as acting on 
$$\left(\bfM/\bfI\right)(\ti)=\ov{\bfM(\ti)/\bfI(\ti)}.$$
In particular, for a dg $2$-subrepresentation $\bfN$ of  $\bfM$, we define the {\bf quotient} $\bfM/\bfN$ as the quotient of $\bfM$ by the dg ideal generated by $\bfN$.

Given  a pretriangulated $2$-representation $\bfM$ of $\cC$, we say that a pretriangulated {\bf $2$-sub\-rep\-re\-sentation} $\bfN$ of $\bfM$ is a collection of thick subcategories $\bfN(\ti)\subseteq \bfM(\ti)$, for $\ti$ of $\cC$, such that, for any $1$-morphisms $\rG$ and $2$-morphism $\alpha$ in $\cC(\ti,\tj)$, we have 
\begin{align*}\bfN\rG(N)=\bfM\rG(N) &\in \bfN(\tj), & \bfN \rG(f)&=\bfM\rG(f)\in \bfN(\tj),&\bfN(\alpha)_N &=\bfM(\alpha)_N, \end{align*}
for any object $N$ and morphism $f$ in $\bfN(\ti)$. In particular, the $\bfN(\ti)$ are closed under the $\cC$-action. We emphasize that we do require each $\bfN(\ti)$ to be closed under forming biproducts and taking dg direct summands that exist in $\bfM(\ti)$.

For a collection of pretriangulated $2$-subrepresentations $\lbrace \bfN^\nu\rbrace_{\nu\in I}$ of a given pretriangulated $2$-representation $\bfM$, we can define the {\bf sum} $\sum_{\nu\in I} \bfN^\nu$ as the smallest pretriangulated $2$-subrepresentation of $\bfM$ which contains all $\bfN^\nu$.

\begin{definition}\label{2repgen}
For any pretriangulated $2$-representation $\bfM$ of $\cC$ and an object $X\in \bfM(\ti)$, for some $\ti\in \cC$, we denote the $2$-subrepresentation on the thick closure of $\{\bfM(\rG) X | \rG \in \cC(\ti,\tj), \tj\in \cC\}$ inside $\coprod_{\tj\in\cC}\bfM(\tj)$ by $\bfG_\bfM(X)$ . We say that $\bfG_\bfM(X)$ is the dg $2$-subrepresentation {\bf $\cC$-generated} by $X$. If $\bfG_\bfM(X)=\bfM$, the object $X$ is said to $\cC$-generate $\bfM$. A pretriangulated $2$-representation $\bfM$ is called {\bf cyclic} if it is $\cC$-generated by some $X\in \bfM(\ti)$, for some $\ti\in \cC$.

Similarly, given an object $X$ in $\bfM(\ti)$, define $\bfG_{\bfK\bfM}(X)$ to be the $2$-subrepresentation of $\bfK\bfM$ with underlying category associated to $\tj$ given by the thick closure of $\{\bfK\bfM(\rG)(X)\,|\, \rG\in \cC(\ti,\tj)\}$, where by thick closure we mean the full triangulated subcategory closed under direct sums and summands.

Finally, we say that $\bfM$ is {\bf weakly $\cC$-generated} by an object $X\in \bfM(\ti)$ if the ideal generated by $\id_X$ equals $\bfM$.
\end{definition}

For future use, we record some  basic results on ideals of dg $2$-subrepresentations.
Let $\bfM$ be a pretriangulated $2$-representation of $\cC$ and $\bfN$ a pretriangulated  $2$-subrepresentation of $\bfM$. For any dg ideal $\bfI$ in $\bfM$, we denote by $\bfI\cap \bfN$ the restriction of $\bfI$ to $\bfN$, i.e., for any object $\ti$ of $\cC$ and objects $X,Y$ of $\bfN(\ti)$,
$$\Hom_{(\bfI\cap \bfN)(\ti)}(X,Y)=\Hom_{\bfI(\ti)}(X,Y).$$
Then $\bfI\cap \bfN$ is a dg ideal in $\bfN$.

\begin{lemma}\label{quotientfaithful}
The induced morphism of pretriangulated $2$-representations
$\bfN/(\bfI\cap \bfN)\longrightarrow \bfM/\bfI$
is fully faithful.
\end{lemma}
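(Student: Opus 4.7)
The plan is to verify fully-faithfulness componentwise, that is, to show that for each $\ti\in\cC$ the induced dg functor
$$\Phi_\ti\colon \bfN(\ti)/(\bfI\cap\bfN)(\ti)\longrightarrow \bfM(\ti)/\bfI(\ti)$$
is fully faithful, and then to note that the structural natural transformations $\eta_\rF$ needed to promote $\Phi$ to a morphism of dg $2$-representations can be taken to be identities and descend unchanged to the quotients. So the whole statement reduces to a pointwise Hom-space computation.

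To carry this out, I would fix an object $\ti\in\cC$ and two objects $X,Y\in\bfN(\ti)$, and then write out both sides of the comparison in terms of the underlying Hom-spaces in $\bfM(\ti)$. On the source, the quotient dg category has
\begin{align*}
\Hom_{\bfN(\ti)/(\bfI\cap\bfN)(\ti)}(X,Y) &= \Hom_{\bfN(\ti)}(X,Y)/\Hom_{(\bfI\cap\bfN)(\ti)}(X,Y),
\end{align*}
while on the target
\begin{align*}
\Hom_{\bfM(\ti)/\bfI(\ti)}(X,Y) &= \Hom_{\bfM(\ti)}(X,Y)/\Hom_{\bfI(\ti)}(X,Y).
\end{align*}
Two observations then finish the argument: first, because $\bfN(\ti)$ is a full (thick) subcategory of $\bfM(\ti)$, the numerators already agree, $\Hom_{\bfN(\ti)}(X,Y)=\Hom_{\bfM(\ti)}(X,Y)$; second, by the very definition of the restricted ideal $\bfI\cap\bfN$ recalled just before the lemma, the denominators also agree, $\Hom_{(\bfI\cap\bfN)(\ti)}(X,Y)=\Hom_{\bfI(\ti)}(X,Y)$. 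Thus $\Phi_\ti$ is in fact a bijection on Hom-spaces, which yields fully-faithfulness.

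Honestly I do not anticipate any real obstacle here: the content of the lemma is essentially a bookkeeping statement that the square of dg ideals $\bfI\cap\bfN\subseteq\bfN$ and $\bfI\subseteq\bfM$ is a pullback on morphism spaces by construction, and that this is preserved by taking quotients. The only point worth a line of care is the $2$-representation structure on the quotients: one has to observe that the $\cC$-action on $\bfN/(\bfI\cap\bfN)$ is the restriction of the $\cC$-action on $\bfM/\bfI$ along the pointwise inclusions, which is immediate from $\bfN$ being a $2$-subrepresentation and $\bfI\cap\bfN$ being the restriction of $\bfI$, so the coherence data $\eta_\rF$ for $\Phi$ pose no difficulty.
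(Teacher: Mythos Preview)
Your proposal is correct and follows essentially the same approach as the paper: both arguments work componentwise at each object $\ti$, use that $\bfN(\ti)$ is a full subcategory of $\bfM(\ti)$ to identify the numerators, and use the definition of $\bfI\cap\bfN$ to identify the denominators (the paper phrases the latter as computing the kernel of $\bfN(\ti)\to\bfM(\ti)/\bfI(\ti)$, which amounts to the same thing). Your additional remark about the coherence data $\eta_\rF$ descending to the quotient is a harmless elaboration the paper leaves implicit.
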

\begin{proof}
Let $\ti$ be an object of $\cC$. 
Then the kernel of the dg functor $\bfN(\ti)\to \bfM(\ti)/\bfI(\ti)$ is given 
$\bfI(\ti)\cap \bfN(\ti)$. Thus, the induced dg functor from  
$\bfN(\ti)/(\bfI\cap \bfN)(\ti)$ to $\big(\bfM/\bfI\big)(\ti)$ is faithful.

By definition of pretriangulated $2$-subrepresentations, $\bfN(\ti)$ is a full dg subcategory of $\bfM(\ti)$. Hence, it follows that $\bfN(\ti)/(\bfI\cap \bfN)(\ti)\to \bfM(\ti)/\bfI(\ti)$ is also full on morphism spaces. 

Thus the morphism  $\bfN/(\bfI\cap \bfN)\longrightarrow \bfM/\bfI$ is also fully faithful.
\end{proof}

\begin{lemma}\label{idealofsub}
Let $\bfM\in \cC\tworep$, $\bfN$ a pretriangulated $2$-subrepresentation of $\bfM$, and $\bfI$ a dg ideal in $\bfN$. Let $\bfI'$ be the ideal in $\bfM$ generated by $\bfI$. Then $\bfI'\cap \bfN=\bfI$.
\end{lemma}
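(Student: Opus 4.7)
The inclusion $\bfI\subseteq \bfI'\cap \bfN$ is immediate since $\bfI$ is already contained in both $\bfI'$ and $\bfN$, so the substance of the lemma lies in the reverse inclusion. My plan is to write out $\bfI'$ explicitly and then exploit the fullness of $\bfN$ in $\bfM$ to pull any morphism of $\bfI'$ whose source and target lie in $\bfN$ back into $\bfI$.

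Concretely, since $\bfI'$ is the smallest dg ideal of $\bfM$ containing $\bfI$, I will first show that any morphism in $\bfI'(\ti)$ between objects $X,Y\in \bfM(\ti)$ can be written as a finite $\Bbbk$-linear combination of expressions of the form
\[ g\circ \bfM(\rG)(f_0)\circ h, \]
where $f_0\colon X_0\to Y_0$ lies in $\bfI(\tj)$ for some $\tj\in \cC$, $\rG\in \cC(\tj,\ti)$ (allowing $\rG=\one_\ti$ to account for pure composition inside $\bfM(\ti)$), and $g\colon \bfM(\rG)(Y_0)\to Y$, $h\colon X\to \bfM(\rG)(X_0)$ are morphisms in $\bfM(\ti)$. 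The $\Bbbk$-span of such expressions is closed under further $\cC$-action via $\bfM(\rH)\bigl(g\circ \bfM(\rG)(f_0)\circ h\bigr)=\bfM(\rH)(g)\circ \bfM(\rH\rG)(f_0)\circ \bfM(\rH)(h)$, and closed under the differential by the Leibniz rule together with $\partial\bfM(\rG)(f_0)=\bfM(\rG)(\partial f_0)$ and $\partial f_0\in \bfI$. Hence this span is indeed all of $\bfI'$.

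Now suppose $f$ lies in $(\bfI'\cap \bfN)(\ti)$, so that $X,Y\in \bfN(\ti)$, and fix such a presentation. For each summand, the intermediate objects $\bfM(\rG)(X_0)=\bfN(\rG)(X_0)$ and $\bfM(\rG)(Y_0)=\bfN(\rG)(Y_0)$ lie in $\bfN(\ti)$, because $\bfN$ is closed under the $\cC$-action. Since $\bfN(\ti)$ is a \emph{full} dg subcategory of $\bfM(\ti)$, the morphisms $g$ and $h$ themselves belong to $\bfN(\ti)$; using that $\bfI$ is a dg ideal of $\bfN$, the middle factor $\bfN(\rG)(f_0)$ lies in $\bfI(\ti)$, so each summand belongs to $\bfI(\ti)$, and hence so does $f$.

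The main, and essentially only, conceptual step is this last reduction: fullness of $\bfN$ in $\bfM$ forces the otherwise arbitrary morphisms $g$ and $h$ in a given presentation of an element of $\bfI'\cap \bfN$ to be recognisable as morphisms of $\bfN$, at which point the ideal property of $\bfI$ inside $\bfN$ finishes the argument. I do not anticipate a genuine obstacle here; one should merely be mindful that the chosen presentation of $f$ is highly non-unique, but this causes no problem since the argument produces containment in $\bfI$ summand by summand.
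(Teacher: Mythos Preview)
Your proof is correct and follows essentially the same approach as the paper's: the key point in both is that $\bfN(\ti)$ is a \emph{full} subcategory of $\bfM(\ti)$, so any composition factors $g,h$ appearing in a presentation of an element of $\bfI'\cap\bfN$ are automatically morphisms of $\bfN$. The paper's proof is a one-line appeal to this fullness, while you have carefully unpacked the explicit description of $\bfI'$ and verified closure under the differential and the $\cC$-action; your version could in fact be streamlined slightly by noting that since $\bfI$ is already $\cC$-stable inside $\bfN$, the factor $\bfM(\rG)(f_0)=\bfN(\rG)(f_0)$ already lies in $\bfI$, so the generic element of $\bfI'(\ti)$ is a sum of terms $g\circ f_0\circ h$ with $f_0\in\bfI(\ti)$ directly.
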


\proof
Note that $\bfI'$ is automatically a dg ideal. 
The statement follows from the definition and the fact that $\bfN(\tj)$ is a pretriangulated subcategory of $\bfM(\tj)$.
\endproof

\begin{lemma}\label{2repidealonoverline}
Let $\cC$ be a dg $2$-category, $\bfM$ a pretriangulated $2$-representation of $\cC$, and $\bfI$ an ideal of $\bfM$.
Further suppose $X\in \bfM(\ti)$ (weakly) $\cC$-generates
 $\bfM$. Then $\bfI$ is completely determined by the sets $\Hom_{\bfI(\tj)}(\rF X,\rG X)$, where $\rF,\rG$ range over the $1$-morphisms in $\cC(\ti,\tj)$ for any $\tj\in \cC$.

In particular, if $\rG_{\ti,\tj}\in \cC(\ti,\tj)$ form a set of generators for each $\ti,\tj$, then the sets $\End_{\bfI(\tj)}(\rG_{\ti,\tj} X)$ determine $\bfI$.
\end{lemma}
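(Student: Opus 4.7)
The plan is to introduce $\bfI'$, the smallest dg ideal of $\bfM$ containing the data $\bigcup_{\tj,\rF,\rG}\Hom_{\bfI(\tj)}(\rF X,\rG X)$, and prove that $\bfI'=\bfI$. The containment $\bfI'\subseteq \bfI$ is immediate since $\bfI$ is itself a dg ideal containing all of this generating data. The substantive work is the reverse inclusion, for which the two generation hypotheses require distinct arguments.

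In the strong generation case, each $\bfM(\tj)$ is the thick pretriangulated closure of $\{\rF X\mid \rF\in\cC(\ti,\tj)\}$. Given $h\in \Hom_{\bfI(\tj)}(Y,Z)$, I would choose twisted complexes $\widetilde Y=(\bigoplus_i \rF_i X\shift{n_i},\alpha_Y)$ and $\widetilde Z=(\bigoplus_k \rG_k X\shift{m_k},\alpha_Z)$ in $\bfM(\tj)$ presenting $Y,Z$ as dg direct summands via splittings $\iota_Y,\pi_Y,\iota_Z,\pi_Z$ with $\pi_Y\iota_Y=\id_Y$ and $\pi_Z\iota_Z=\id_Z$. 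The lift $\tilde h:=\iota_Z h\pi_Y$ lies in $\bfI(\tj)$ because $\bfI$ is an ideal, and its matrix entries $\tilde h_{k,i}$, extracted via further pre/post-composition with the tautological inclusions and projections of the twisted complexes, likewise lie in $\bfI(\tj)$. The shift isomorphism identifies each entry with an element of $\Hom_{\bfI(\tj)}(\rF_i X,\rG_k X)$, so $\tilde h$ reassembles in $\bfI'$ via the pretriangulated structure, and $h=\pi_Z\tilde h\iota_Y\in\bfI'$. This is a multi-generator version of Lemma~\ref{easyideals}.

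In the weak generation case, the hypothesis that $\bfM$ equals the dg ideal of itself generated by $\id_X$ means that $\id_Y$ and $\id_Z$ can be written, modulo $\partial$-exact terms, as finite $\Bbbk$-linear sums of iterated composites $g\cdot m\cdot f$ in which $m\colon \rH X\to \rH' X$ is built from $\id_X$ through the $\cC$-action and $f,g\in\bfM(\tj)$. Substituting these expansions into $h=\id_Z\circ h\circ\id_Y$ yields, modulo a $\partial$-exact residue (which is in $\bfI'$ by closure under $\partial$), a sum of composites of shape $g_j m_j\cdot (f_j h g'_k)\cdot m'_k f'_k$. The middle factor $f_j h g'_k$ is a morphism between two objects of the form $\rH X$ and $\rH' X$, and lies in $\bfI$ because $h\in\bfI$ and $\bfI$ is an ideal; therefore it belongs to the given data. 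Each summand then lies in $\bfI'$ by closure under composition with $\bfM$-morphisms, whence $h\in\bfI'$.

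For the concluding \emph{in particular} statement, if $\rG_{\ti,\tj}$ generates $\ov{\cC(\ti,\tj)}$, every $\rF\in\cC(\ti,\tj)$ is a dg direct summand of a twisted complex of shifts of $\rG_{\ti,\tj}$; applying $\bfM$ (extended to $\ov{\cC}$ via Lemma~\ref{2rep-ov}), $\rF X$ is a summand in $\bfM(\tj)$ of a twisted complex of shifts of $\rG_{\ti,\tj}X$. Matrix decomposition combined with the shift isomorphism then expresses $\Hom_{\bfI(\tj)}(\rF X,\rG X)$ in terms of $\End_{\bfI(\tj)}(\rG_{\ti,\tj}X)$. The main obstacle is the bookkeeping in the weak generation case: unpacking the recursive closure that defines ``the ideal generated by $\id_X$'' into a usable normal form requires care, particularly in tracking $\partial$-exact corrections that arise when rewriting $\id_Y$ and $\id_Z$, whereas the strong case is cleaner thanks to the explicit twisted-complex decomposition.
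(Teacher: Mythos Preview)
Your argument is correct, but it is considerably more laborious than the paper's, and the difference is instructive.

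First, the paper does not treat the strong and weak cases separately: it simply observes that strong $\cC$-generation implies weak $\cC$-generation, and handles only the latter. Second, and more importantly, the ``bookkeeping obstacle'' you flag in the weak case is a phantom. Since $\partial(\id_X)=0$, the $\cC$-stable two-sided ideal generated by $\id_X$ (i.e.\ the set of finite sums $\sum_l a_l\circ \id_{\rF_l X}\circ b_l$) is already closed under the differential, so no $\partial$-exact correction terms ever appear when you write $\id_Y\in\bfI_\bfM(\id_X)$. In fact your treatment of those terms is slightly shaky: you assert the residue lies in $\bfI'$ ``by closure under $\partial$'', but closure under $\partial$ only gives $\partial(\bfI')\subseteq\bfI'$, not that arbitrary $\partial$-exact elements lie in $\bfI'$; fortunately the point is moot. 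Third, the paper collapses your sum-of-composites expansion into a single factorisation by passing to the direct sum $\rF=\bigoplus_l\rF_l$ in $\ov{\cC}$: one obtains $\id_M=p_M\circ\id_{\rF X}\circ\iota_M$ with $p_M$ a row vector and $\iota_M$ a column vector. Then for $g\in\Hom_{\bfI(\tj)}(M,N)$ one simply sets $g'=\iota_N\, g\, p_M\in\Hom_{\bfI(\tj)}(\rF X,\rG X)$ and recovers $g=p_N\, g'\,\iota_M$, which is a one-line proof that $g$ lies in the ideal generated by the prescribed data. Your twisted-complex matrix argument for the strong case is a special instance of this same sandwich trick.
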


\proof
Note that if $X$ $\cC$-generates $\bfM$, then it, in particular, weakly $\cC$-generates $\bfM$. 
Thus, we assume that $X\in \bfM(\ti)$ weakly $\cC$-generates $\bfM$. Then, for any $\ti$ and any object $Y\in \bfM(\tj)$, we find that $\id_Y$ is in the ideal $\bfI_{\bfM}(X)$. 

Thus, using Lemma \ref{easyideals}\eqref{idealonoverline}, $\bfI(\tj)$ is determined by  the sets $\bfI(\tj)\cap \Hom_{\bfM(\tj)}(\rF X,\rG X)$. 
The same proof works if $X\in \bfM(\ti)$ weakly $\cC$-generates $\bfM$, see Definition \ref{2repgen}, noting that if $g\in \Hom_{\bfI(\tj)}(M,N)$ for $M,N$ such that $\id_M = p_M\id_{\rF X}\iota_M, \id_N = p_N\id_{\rF Y}\iota_N$,  then $g'=\iota_N g p_M\in \Hom_{\bfI(\tj)}(\rF X,\rG X)$ and $g= p_Ng' \iota_M$.

 If each $\cC(\ti,\tj)$ has a generator $\rG_{\ti,\tj}$, it suffices to set $\rF=\rG=\rG_{\ti,\tj}$ observing that there are no $2$-morphisms between $\rG_{\ti,\tj}X$ and $\rG_{\ti,\tk}X$ for $\tj\neq \tk$.
\endproof

\subsection{Quotient-simple pretriangulated \texorpdfstring{$2$}{2}-representations}\label{sect-qs}

A pretriangulated $2$-representation $\bfM$ is called {\bf quotient-simple} if it does not have any proper nonzero dg ideals.

We will now show that passing to dg idempotent completions preserves quotient-simplicity.
Let $\bfM\in \cC\tworep$ and let $\bfI$ be a dg ideal in $\bfM$. Denote by $\bfI^\circ$ the dg ideal generated by $\bfI$ in the pretriangulated $2$-representation $\bfM^\circ$ defined in Lemma \ref{extendtocirc}. We use this notation for the ideal in $\bfM^\circ$ viewed as either a pretriangulated $2$-representation of $\cC$ or $\cC^\circ$. We record the following straightforward lemmas.

\begin{lemma}\label{circideal}
For objects $X,Y\in\bfM(\ti)$ and dg idempotents $e\colon X\to X$, $e'\colon Y\to Y$,  the morphism spaces in $\bfI^\circ$ are given by
$$\Hom_{\bfI^\circ(\ti)}(X_e,X_{e'})=e'\circ \Hom_{\bfI(\ti)}(X,Y)\circ e.$$
In particular, $\bfM\cap \bfI^{\circ}=\bfI$. 
\end{lemma}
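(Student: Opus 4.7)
The natural approach is to \textbf{exhibit a candidate description} of $\bfI^\circ$ and verify that it coincides with the ideal generated by $\bfI$. Concretely, I would define a collection $\bfJ$ by setting
$$\Hom_{\bfJ(\ti)}(X_e, Y_{e'}) := e' \circ \Hom_{\bfI(\ti)}(X,Y) \circ e \subseteq \Hom_{\bfM^\circ(\ti)}(X_e, Y_{e'}),$$
and then show that $\bfJ$ is a dg $2$-ideal of $\bfM^\circ$ containing $\bfI$ (under the canonical embedding $\bfM\hookrightarrow \bfM^\circ$), and conversely that every element of $\bfJ$ already lies in any dg ideal of $\bfM^\circ$ that contains $\bfI$. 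Minimality of $\bfI^\circ$ then gives $\bfJ=\bfI^\circ$.

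The verification that $\bfJ$ is an ideal splits into the usual list of checks. For closure under composition in $\bfM^\circ$, a morphism in $\bfM^\circ$ from $Y_{e'}$ to $Z_{e''}$ has the shape $e''ge'$ with $g$ in $\bfM$, so post-composing with $e'fe$ yields $e''(ge'f)e$, and $ge'f\in\bfI$ because $\bfI$ is a two-sided ideal in $\bfM$; the other side is symmetric. For closure under the differential, since $e, e'$ are dg idempotents ($\partial e=\partial e'=0$), the Leibniz rule gives $\partial(e'fe)=e'(\partial f)e$, which lies in $\bfJ$ as $\partial f\in\bfI$. Closure under the $\cC$-action follows because $\bfM^\circ(\rF)$ acts on morphisms by applying $\bfM(\rF)$ component-wise (see \eqref{functorcirc}), so $\bfM^\circ(\rF)(e'fe) = \bfM(\rF)(e')\,\bfM(\rF)(f)\,\bfM(\rF)(e)$, and $\bfM(\rF)(f)\in\bfI$ by assumption, while $\bfM(\rF)(e'),\bfM(\rF)(e)$ are precisely the idempotents defining $\bfM^\circ(\rF)(Y_{e'})$ and $\bfM^\circ(\rF)(X_e)$. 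Any $2$-morphism action reduces to post/pre-composition with a morphism in $\bfM^\circ$, already handled.

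To pin down $\bfJ=\bfI^\circ$, containment $\bfI\subseteq\bfJ$ is immediate by taking $e=\id_X$, $e'=\id_Y$, giving $\bfI^\circ\subseteq\bfJ$ by minimality. For the reverse containment $\bfJ\subseteq\bfI^\circ$, observe that any generator $e'fe$ of $\bfJ$ factors in $\bfM^\circ$ as the composition $X_e \xrightarrow{e} X_{\id_X} \xrightarrow{f} Y_{\id_Y} \xrightarrow{e'} Y_{e'}$, and since $\bfI^\circ$ contains $f$ (as an element of $\bfI$) and is closed under composition with $\bfM^\circ$-morphisms, this element lies in $\bfI^\circ$. This establishes the first claim.

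The \textbf{second statement} $\bfM\cap\bfI^\circ=\bfI$ is then a direct corollary: under the embedding $\bfM\hookrightarrow \bfM^\circ$, a morphism $f\colon X\to Y$ in $\bfM(\ti)$ is identified with a morphism $X_{\id_X}\to Y_{\id_Y}$, and applying the formula with $e=\id_X$, $e'=\id_Y$ yields $\Hom_{\bfI^\circ(\ti)}(X_{\id_X},Y_{\id_Y}) = \Hom_{\bfI(\ti)}(X,Y)$. I do not anticipate a serious obstacle here; the only subtlety worth being careful about is keeping track of degrees in the differential computation and ensuring that the $\cC$-action (rather than the $\cC^\circ$-action) is the one meant by ``dg ideal in $\bfM^\circ$ generated by $\bfI$'', which the statement explicitly allows either way.
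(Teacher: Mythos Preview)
Your proposal is correct and is exactly the natural argument; the paper itself records this lemma as ``straightforward'' and does not supply a proof, so there is nothing to compare against beyond noting that your verification is the expected one.
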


We note that $\bfJ$ is a dg ideal in $\bfM^\circ\in \cC^\circ \tworep$  if and only if $\bfJ$ is a dg ideal in $\bfM^\circ\in \cC\tworep$. This follows from 
$$\bfM(\rF_e)(f)=(\bfM(e)(\id)\bfM(\rF)(f)(\bfM(e)(\id),$$
for any dg idempotent $e\colon \rF\to \rF$.
Hence, if $\bfJ(\ti)$ is a two-sided dg ideal in $\bfM(\ti)$ and preserved by $\bfM(\rF)$, then it is preserved by $\bfM(\rF_e)$.

\begin{lemma}\label{LemIcapM}
If $\bfJ$ is a dg ideal in $\bfM^\circ$, then $\bfJ$ is generated by the subset $\bfJ\cap \bfM$. 
\end{lemma}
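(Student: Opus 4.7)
The plan is to verify the equality $\bfJ=\langle \bfJ\cap \bfM\rangle$, where $\langle -\rangle$ denotes the dg ideal of $\bfM^\circ$ generated by a given set of morphisms. The inclusion $\langle \bfJ\cap \bfM\rangle\subseteq \bfJ$ is immediate, since $\bfJ$ is itself a dg ideal of $\bfM^\circ$ containing $\bfJ\cap \bfM$, so the content lies entirely in the reverse inclusion.

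The key observation I would use is that in $\bfM^\circ$ each object $X_e$ is a dg direct summand of $X_{\id}$, and $X_{\id}$ lies in the image of the inclusion $\bfM\hookrightarrow \bfM^\circ$. Explicitly, the defining formula $\Hom_{\bfM^\circ(\ti)}(X_e,Y_{e'})=e'\,\Hom_{\bfM(\ti)}(X,Y)\,e$ produces canonical morphisms $\iota_e\colon X_e\to X_{\id}$ and $\pi_e\colon X_{\id}\to X_e$, both represented by the dg idempotent $e\in \End_{\bfM(\ti)}(X)$, satisfying $\pi_e\circ\iota_e=\id_{X_e}$ and $\iota_e\circ\pi_e=e$.

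Given an arbitrary morphism $f\in \Hom_{\bfJ(\ti)}(X_e,Y_{e'})$, the plan is to form the composition $g:=\iota_{e'}\circ f\circ \pi_e$, regarded as a morphism $X_{\id}\to Y_{\id}$ in $\bfM^\circ$. Closure of $\bfJ$ under pre- and post-composition with arbitrary $\bfM^\circ$-morphisms places $g$ in $\bfJ$, and since its source and target now lie in the image of $\bfM$, in fact $g$ lies in $(\bfJ\cap \bfM)(\ti)$. A direct computation using the $\Hom$ formula (writing $f=e'\tilde{f}e$ for some $\tilde{f}\in \Hom_{\bfM(\ti)}(X,Y)$) shows $\pi_{e'}\circ g\circ \iota_e=f$, exhibiting $f$ as a morphism in the dg ideal generated by $g$. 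Closure under the $\cC$-action (equivalently $\cC^\circ$-action, by the remark preceding the lemma) is automatic, being built into the definition of the generated dg ideal. I do not expect any substantive obstacle; this is essentially the $2$-representation analogue of Lemma \ref{easyideals}\eqref{idealtrivial}, with the only care required being the bookkeeping of the splittings $\iota_e,\pi_e$.
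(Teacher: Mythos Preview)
Your argument is correct and is precisely the expected one. The paper itself does not give a proof of this lemma, merely recording it as straightforward alongside Lemma~\ref{circideal}; your splitting argument via $\iota_e,\pi_e$ is the natural way to flesh it out and is essentially the inverse of the computation in Lemma~\ref{circideal}.
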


As a consequence, completing under dg idempotents preserves quotient-simplicity. 

\begin{lemma}\label{lem-simpletrans-circ}
Let $\bfM$ be a quotient-simple pretriangulated $2$-representation of $\cC$. Then $\bfM^\circ$ is a quotient-simple  pretriangulated $2$-representation of $\cC^\circ$ (as well as $\cC$).
\end{lemma}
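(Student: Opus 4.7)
The plan is to take an arbitrary nonzero dg ideal $\bfJ$ of $\bfM^\circ$ and show that $\bfJ=\bfM^\circ$. By the observation immediately preceding Lemma \ref{LemIcapM}, the notions of dg ideal in $\bfM^\circ$ as a $\cC$-representation and as a $\cC^\circ$-representation coincide, so it is enough to treat one of the two cases, and the result for both follows simultaneously.

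First I would verify that $\bfJ\cap \bfM$ is a dg ideal of $\bfM$ as a $\cC$-representation. It is closed under pre- and post-composition with morphisms of $\bfM$ by restriction, and it is closed under the differential since $\bfJ$ is. Stability under the $\cC$-action follows because, for any $1$-morphism $\rF\in \cC$, the functor $\bfM^\circ(\rF)$ agrees on objects and morphisms of $\bfM$ with the functor $\bfM(\rF)$, so the image of a morphism of $\bfJ\cap \bfM$ under $\bfM(\rF)$ lies in $\bfJ\cap \bfM$.

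Next, since $\bfJ$ is nonzero, Lemma \ref{LemIcapM} forces $\bfJ\cap \bfM$ to be nonzero as well (otherwise the ideal in $\bfM^\circ$ it generates would vanish). By the quotient-simplicity of $\bfM$, we conclude $\bfJ\cap \bfM=\bfM$, and in particular $\id_X\in \bfJ(\ti)$ for every object $X$ of $\bfM(\ti)$ and every $\ti\in \cC$. Now for an arbitrary object $X_e$ of $\bfM^\circ(\ti)$, under the identification $\Hom_{\bfM^\circ(\ti)}(X_e,X_e)=e\,\End_{\bfM(\ti)}(X)\,e$ the identity morphism of $X_e$ is $e=e\circ \id_X\circ e$, which lies in $\bfJ(\ti)$ because ideals are two-sided. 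Hence $\bfJ$ contains every identity of $\bfM^\circ$, so $\bfJ=\bfM^\circ$.

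The argument is essentially bookkeeping built on the two preceding lemmas; the only point requiring care is the identification of $\id_{X_e}$ with the idempotent $e$ in the morphism space $e\End_{\bfM(\ti)}(X)e$, and the observation that closure under the $\cC^\circ$-action on $\bfM^\circ$ is no stronger than closure under the $\cC$-action thanks to the formula $\bfM(\rF_e)(f)=\bfM(e)\circ \bfM(\rF)(f)\circ \bfM(e)$ recalled just before Lemma \ref{LemIcapM}.
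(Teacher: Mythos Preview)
Your argument is correct and follows essentially the same approach as the paper, just in the contrapositive direction: the paper starts from a proper ideal $\bfJ$ and shows $\bfJ\cap\bfM$ is proper (hence zero, hence $\bfJ=0$ by Lemma~\ref{LemIcapM}), while you start from a nonzero $\bfJ$ and show $\bfJ\cap\bfM$ is nonzero (hence all of $\bfM$, hence $\bfJ=\bfM^\circ$). The key ingredients---Lemma~\ref{LemIcapM} and the observation that $\id_{X_e}=e\circ\id_X\circ e$---are identical.
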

\begin{proof}
Assume that $\bfM$ is quotient-simple and let $\bfJ$ be a proper dg ideal in $\bfM^\circ$. Then $\bfI=\bfJ\cap \bfM$ is a proper dg ideal in $\bfM$ as, otherwise, $\bfI\subseteq \bfI^\circ$ would contain all identities in $\bfM$, and hence all identities of $\bfM^\circ$ as well. Thus, $\bfI$ is the zero ideal. This implies that $\bfJ$ is the zero ideal as $\bfI$ generates $\bfJ$ as an ideal. Hence, $\bfM^\circ$ is quotient-simple. 
\end{proof}

\section{Dg cell \texorpdfstring{$2$}{2}-representations}\label{cellchapter}

In finitary $2$-representation theory, a crucial role has been played by the construction of so-called cell $2$-representations. In many, but not all cases, these exhaust simple $2$-representations, and in a certain sense, all simple $2$-representations of a fiat $2$-category are governed by them. Cell $2$-representations are constructed by taking the $2$-subrepresentation, generated by the $1$-morphisms in a so-called {\em left cell}, which is combinatorially defined, of a principal (i.e.\ representable when viewed as a pseudofunctor) $2$-representation, and quotienting out by what turns out to be its {\em unique} maximal ideal. For the left cell containing the identity $1$-morphism on an object $\ti$, this $2$-subrepresentation is precisely the principal $2$-representation $\bfP_\ti$ itself. 

In our more general setting, the following example shows that uniqueness of maximal ideals cannot be expected.

\begin{example}\label{geomex1}
Let $R$ be a finitely generated commutative ring and $\C$ the dg category with single object $\one$ whose endomorphism ring is $R$. Let $\cC$ be the dg $2$-category on one object $\bullet$, where $\cC(\bullet,\bullet)$ is $\ov{\C}$. Note that $\cC(\bullet,\bullet)$ is dg equivalent to the dg category of bounded complexes of free $R$-modules and under this equivalence, horizontal composition translates to the tensor product over $R$.

Consider the principal $2$-representation $\bfP_{\bullet}$. It is easy to see that every $\mathbf{m}\in \MaxSpec(R)$ defines a maximal ideal in $\bfP_{\bullet}$ and that the category underlying the corresponding quotient $2$-representation is given by bounded complexes over $R/\mathbf{m}$.
Thus, we obtain at least as many quotient-simple $2$-representations associated to $\bfP_{\bullet}$ as there are maximal ideals in our ring. This in contrast to the finitary case, where uniqueness maximal ideals in the $2$-representation is implied by locality of the endomorphism rings of the generating $1$-morphisms. 
\end{example}

Motivated by this example, we will generalise the approach to cell $2$-representations in this section. In Section \ref{cellcomb}, we will define two natural candidates ({\em weak} versus {\em strong}) for cells. The first is justified in Section \ref{apexsubsec}, where we prove that to any quotient-simple $2$-representation we can uniquely associate a certain weak two-sided cell, its so-called weak apex, generalising the analogous statement in finitary $2$-representation theory and reducing the problem of classifying quotient-simple $2$-representations to a cell-by-cell approach. This justifies the definition of weak cells, while the strong cells connect more naturally to the associated triangulated $2$-representations, see Section \ref{sectriangcell}. After defining the $2$-subrepresentations corresponding to weak and strong cells in Section \ref{subsforcells}, we then define spectra of maximal ideals of these $2$-subrepresentations in Section \ref{secspectra} and prove a bijection between maximal spectra of associated weak and strong cells in Theorem \ref{maxspecbij}. After defining cell $2$-representations in Definition \ref{def:cell2rep} and returning to the above example in Example \ref{geomex},  the rest of Section \ref{cellchapter} is devoted to a technical analysis of weak and strong cell $2$-representations.

Throughout this section, let $\cC$ be a pretriangulated $2$-category, which is not a genuine restriction by Lemma \ref{2rep-ov}.

\subsection{Cell combinatorics}\label{cellcomb}

We write $\mathcal{S}(\cC)$ for the set of dg isomorphism classes of dg indecomposable $1$-morphisms in $\cC$ up to shift. This set forms a multi-semigroup, see \cite[Section~3]{MM2}, and can be equipped with several natural preorders inspired by \cite{Gr}. 

We now give two definitions of left orders on $\mathcal{S}(\cC)$, using the notation from Definitions \ref{idealgen} and \ref{2repgen}.

\begin{definition}\label{def:cells} Let $\rF\in \cC(\ti,\tj)$ and $\rG\in \cC(\ti,\tk)$.
\begin{enumerate}[(a)]
\item\label{def1} We say $\rF$ is {\bf left strongly less than or equal to} $\rG$ if $\bfG_{\bfP_\ti}(\rG)\subseteq \bfG_{\bfP_\ti}(\rF)$. If this is the case, we write $\rF\leq_{L}\rG$.
\item\label{def2} We say $\rF$ is {\bf left weakly less than or equal to} $\rG$ if $\bfI_{\bfP_\ti}(\id_\rG)\subseteq \bfI_{\bfP_\ti}(\id_\rF)$.  If this is the case, we write $\rF\preceq_{L}\rG$.
\end{enumerate}
\end{definition}

\begin{remark}\label{explicit}
Observe that $\rF\preceq_{L}\rG$ if and only if $\id_\rG\in \bfI_{\bfP_\ti}(\id_\rF)$ 
or, in other words, if $\id_\rG=p\circ\id_{\rH\rF}\circ \iota$ for some $1$-morphism $\rH$ and $2$-morphisms $p$ and $\iota$ in $\cC$, which are not necessarily annihilated by the differential. Note that the definition of weak order does not depend on the differential graded structure, but rather on the underlying $\Bbbk$-linear structure.
Moreover, $\rF\leq_{L}\rG$ if and only if $\rG$ is in the thick closure of $\{\rH\rF \,\vert \,\rH \text{ a $1$-morphism in } \cC\}$.
\end{remark}

Equivalence classes for these two partial preorders $\leq_L$ and $\preceq_L$ are called {\bf strong left cells} and {\bf weak left cells}, respectively. If $\rF$ and $\rG$ are strongly left equivalent, we write $\rF\steq_L\rG$, and if they are weakly left equivalent, we write $\rF\sim_L\rG$. If $\rF\leq_{L}\rG$ and $\rF\ngeq_{L}\rG$, we will write $\rF<_{L}\rG$.  Similarly, for $\rF\preceq_{L}\rG$ and $\rF\nsucceq_{L}\rG$, we will write $\rF\prec_{L}\rG$.

Using Remark \ref{explicit}, we can now define the {\bf strong} and {\bf weak right} preorders $\leq_R$ and  $\preceq_R$, as well as 
{\bf strong} and {\bf weak two-sided} preorders $\leq_J$ and $\preceq_J$.

\begin{definition}
\begin{enumerate}[(a)]
\item\label{rdef1} We say $\rF$ is {\bf right strongly less than or equal to} $\rG$ if $\rG$ is in the thick closure of $\{\rF\rH \,\vert \,\rH \text{ a $1$-morphism in } \cC\}$. If this is the case, we write $\rF\leq_{R}\rG$.
\item\label{rdef2} We say $\rF$ is {\bf right weakly less than or equal to} $\rG$ if $\id_\rG=p\circ\id_{\rF\rH}\circ \iota$ for some $1$-morphism $\rH$ and $2$-morphisms $p$ and $\iota$ in $\cC$.   If this is the case, we write $\rF\preceq_{R}\rG$.
\item\label{2def1} We say $\rF$ is {\bf two-sided strongly less than or equal to} $\rG$ if $\rG$ is in the thick closure of $\{\rH_1\rF\rH_2 \,\vert\, \rH_1,\rH_2 \text{ $1$-morphisms in } \cC\}$. If this is the case, we write $\rF\leq_{J}\rG$.
\item\label{2def2} We say $\rF$ is {\bf right weakly less than or equal to} $\rG$ if $\id_\rG=p\circ\id_{\rH_1\rF\rH_2}\circ \iota$ for some $1$-morphisms $\rH_1,\rH_2$ and $2$-morphisms $p$ and $\iota$ in $\cC$.   If this is the case, we write $\rF\preceq_{J}\rG$.\end{enumerate}
\end{definition}

\begin{remark}\label{weakin2cat}
As explained in Remark \ref{explicit}, the condition $\rF\preceq_L\rG$ is equivalent to the left dg $2$-ideal in $\cC$ generated by $\id_\rG$ being contained in the left dg $2$-ideal generated by $\id_\rF$. Similarly, $\rF\preceq_R\rG$ if and only if the right dg $2$-ideal in $\cC$ generated by $\id_\rG$ is contained in the right dg $2$-ideal generated by $\id_\rF$; and 
$\rF\preceq_J\rG$ if and only if the (two-sided) dg $2$-ideal in $\cC$ generated by $\id_\rG$ is contained in the (two-sided)  dg $2$-ideal in $\cC$ generated by $\id_\rF$.
\end{remark}

For these notions, we obtain the corresponding {\bf strong} and {\bf weak right} and {\bf strong} and {\bf weak two-sided} cells, respectively. 

Observe that $\leq_L$ (resp. $\preceq_L$) defines a genuine partial order on the set of strong (resp. weak) left cells, and similarly  for $\leq_R$ (resp. $\preceq_R$) and strong (resp. weak) right cells, and for $\leq_J$ (resp. $\preceq_J$) and strong (resp. weak) two-sided cells. For a fixed (strong or weak) left cell $\L$ in $\cC$ , notice that there exists a unique object $\ti_\L\in \cC$ such that each $\rF\in \L$ is in $\cC(\ti_\L,\tj)$ for some $\tj\in \cC$.

Note that $\rF\leq_{L}\rG$ implies $\rF\preceq_{L}\rG$.
Thus, $\rF\steq_L\rG$ implies $\rF\sim_L\rG$ and any weak left cell is a union of strong left cells.

\subsection{The weak apex}\label{apexsubsec}

In this subsection, we follow the ideas from \cite{ChMa}. For any $\bfM\in \cC\tworep$, set $\cC_{\bfM}=\cC/\Ann_{\cC}(\bfM)$.

\begin{lemma}
Strong, respectively weak, two-sided cells of $\cC_{\bfM}$ form a subset of strong, respectively weak, two-sided cells of $\cC$.
\end{lemma}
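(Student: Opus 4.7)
The plan is to use the quotient $2$-functor $\pi\colon\cC\to \cC_\bfM=\cC/\Ann_\cC(\bfM)$, which is the identity on objects and $1$-morphisms and quotients $2$-morphism spaces by $\Ann_\cC(\bfM)$. The goal is to identify each (strong, resp.\ weak) two-sided cell of $\cC_\bfM$ with a (strong, resp.\ weak) two-sided cell of $\cC$.

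First, I would establish the correspondence on indecomposables: since 1-morphisms coincide in $\cC$ and $\cC_\bfM$, any dg indecomposable $\overline{\rF}$ in $\cC_\bfM$ is represented by a 1-morphism $\rF$ of $\cC$ whose image under $\bfM$ is nonzero (otherwise $\id_\rF\in\Ann_\cC(\bfM)$, forcing $\overline{\rF}\cong 0$). A decomposition $\rF=\rF_1\oplus\cdots\oplus\rF_n$ into dg indecomposables of $\cC$ descends under $\pi$, and those summands $\rF_i$ with $\bfM(\rF_i)=0$ vanish in $\cC_\bfM$, so we may assume each $\overline{\rF}\in\mathcal{S}(\cC_\bfM)$ lifts to a dg indecomposable $\rF\in\mathcal{S}(\cC)$ with $\bfM(\rF)\neq 0$.

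Next, I would verify compatibility of both preorders under this identification. The direction $\rF\preceq_J^\cC\rG\Rightarrow \rF\preceq_J^{\cC_\bfM}\rG$ (resp.\ for $\leq_J$) is immediate: by Remark \ref{explicit}, a witnessing factorization $\id_\rG=p\cdot\id_{\rH_1\rF\rH_2}\cdot\iota$ in $\cC$ (resp.\ thick-closure containment in $\bfP_\ti^\cC$) pushes forward under $\pi$ to the same relation in $\cC_\bfM$. This already shows that every two-sided cell of $\cC$ is contained in a unique two-sided cell of $\cC_\bfM$, which by contraposition means that any two-sided cell of $\cC_\bfM$ is a union of two-sided cells of $\cC$. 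For the converse, I would use Remark \ref{weakin2cat} together with the correspondence between dg $2$-ideals of $\cC_\bfM$ and dg $2$-ideals of $\cC$ containing $\Ann_\cC(\bfM)$: a factorization in $\cC_\bfM$ lifts to $\id_\rG-p\cdot\id_{\rH_1\rF\rH_2}\cdot\iota\in\Ann_\cC(\bfM)$ in $\cC$, and combined with the fact that $\id_\rG\notin\Ann_\cC(\bfM)$ (since $\bfM(\rG)\neq 0$), this produces a genuine factorization in $\cC$ exhibiting $\rF\preceq_J^\cC\rG$. The strong case is parallel, replacing $2$-ideals by pretriangulated $2$-subrepresentations of $\bfP_\ti$ and invoking Lemma \ref{easyideals}\eqref{idealonoverline} to transfer thick-closure containments through $\pi$.

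The main obstacle will be the reverse direction: rigorously upgrading the factorization modulo $\Ann_\cC(\bfM)$ to an honest factorization in $\cC$ for surviving indecomposables. I expect this to rest on the observation that non-annihilated identities are not ``reachable'' purely through $\Ann_\cC(\bfM)$, so the annihilator correction term in any lifted factorization can be absorbed into the ideal generated by $\id_\rF$ itself. The net outcome is a bijection between cells of $\cC_\bfM$ and the subset of cells of $\cC$ consisting of those not entirely annihilated by $\bfM$, establishing the claim.
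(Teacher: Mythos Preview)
The paper's proof is much shorter than your plan and does not touch the direction you call the ``main obstacle.'' It simply observes that annihilation propagates upward in the two-sided preorder: if $\rF$ is annihilated by $\bfM$ (respectively, $\id_\rF\in\Ann_\cC(\bfM)$), then so is every $\rG\geq_J\rF$ (respectively, $\rG\succeq_J\rF$), because the annihilator is a dg $2$-ideal. Hence each two-sided cell of $\cC$ is either entirely annihilated or entirely not, and the paper concludes the lemma from this alone.

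Your proposal attempts a full two-way comparison of the preorders on $\cC$ and $\cC_\bfM$. The forward direction you sketch is fine and is essentially the content of the paper's argument. The converse direction, however, has a genuine gap. From $\id_\rG - p\cdot\id_{\rH_1\rF\rH_2}\cdot\iota\in\Ann_\cC(\bfM)$ together with $\id_\rG\notin\Ann_\cC(\bfM)$ you cannot manufacture an honest factorization of $\id_\rG$ through some $\rH_1'\rF\rH_2'$ in $\cC$: your claim that the ``annihilator correction term can be absorbed into the ideal generated by $\id_\rF$'' would require that correction term to already lie in that ideal, and nothing in the setup forces this. The parallel strong-order argument via Lemma~\ref{easyideals}\eqref{idealonoverline} has the same problem: passing to the quotient can in principle enlarge thick closures, and you give no mechanism to rule this out. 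Separately, your first paragraph tacitly assumes that every $1$-morphism of $\cC$ decomposes into dg indecomposables, which is not among the paper's standing hypotheses (local Krull--Schmidt only enters in Section~\ref{sectriangcell}).

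In short, you are over-engineering relative to the paper: the only ingredient the paper uses is upward-closure of annihilation, and the extra converse step you attempt does not go through as written.
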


\proof
Let $\J$ be a strong two-sided cell. If $\rF\in \J$ is annihilated by $\bfM$, then $\rG$ is annihilated by $\bfM$ for all $\rG\geq_J \rF$. Similarly, let  $\J$ be a weak two-sided cell and $\rF\in \J$. If $\id_\rF$ is in $\Ann_{\cC}(\bfM)$, the so is $\id_\rG$ for every $\rG\succeq_J \rF$, since the annihilator is a  dg $2$-ideal of $\cC$. The statement of the lemma follows.
\endproof

\begin{proposition}\label{uniquemax}
If $\bfM\in \cC\tworep$ is quotient-simple, then $\cC_{\bfM}$ has a unique maximal weak two-sided cell.
\end{proposition}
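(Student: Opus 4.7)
The plan is to show that any two maximal weak two-sided cells $\J_1, \J_2$ of $\cC_\bfM$ must coincide, by constructing a cell $\J$ that dominates both in $\preceq_J$. Pick representatives $\rF_i\in\J_i$ and let $\bfI_i := \langle \id_{\rF_i}\rangle$ denote the associated two-sided dg $2$-ideals of $\cC_\bfM$. Since $\bfM$ descends to a faithful quotient-simple $2$-representation of $\cC_\bfM$ and each $\bfI_i$ is nonzero (as $\id_{\rF_i}\neq 0$ in $\cC_\bfM$), the induced dg ideals $\bfM\bfI_i$ of $\bfM$ are nonzero, hence equal $\bfM$ by quotient-simplicity.

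The technical heart is to show that the horizontal product $2$-ideal $\bfI_1\bfI_2$ is itself nonzero in $\cC_\bfM$; if so, this provides a nonzero identity $\id_\rE$ with $\rE = \rH_1\rF_1\rH_2\rK_1\rF_2\rK_2$. Were $\bfI_1\bfI_2=0$ in $\cC_\bfM$, then $\bfM$ would annihilate every such $\id_\rE$, so $\bfM(\rH_1\rF_1\rH_2\rK_1\rF_2\rK_2) = 0$ for all composable data. To contradict this, I would translate $\bfM\bfI_i = \bfM$ via the standard correspondence between identities contained in a dg ideal and summand decompositions in the homotopy category of the idempotent completion: the equality $\bfM\bfI_i = \bfM$ expresses every nonzero $Z$ in $\bfK\bfM^\circ$ as a summand of a finite direct sum $\bigoplus_k\bfM(\rH_1^k\rF_i\rH_2^k)(Y_k)$, obtained by realising $\id_Z\in \bfM\bfI_i$ as a factorisation through such a sum up to homotopy. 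Apply this with $\bfI_1$ to $Z=\bfM(\rF_2)(Y_0)$ for any $Y_0$ with $\bfM(\rF_2)(Y_0)\neq 0$, then for each occurring $Y_k$ substitute via the $\bfI_2$-summand property. Using that the dg functors $\bfM(\rH_1^k\rF_1\rH_2^k)$ preserve direct sums and that $\bfM(\rE)\bfM(\rE')=\bfM(\rE\rE')$, the object $\bfM(\rF_2)(Y_0)$ becomes a summand of $\bigoplus_{k,l}\bfM(\rH_1^k\rF_1\rH_2^k\rK_1^{kl}\rF_2\rK_2^{kl})(Z_{kl})$. Non-vanishing of $\bfM(\rF_2)(Y_0)$ forces at least one summand to be nonzero, contradicting the assumption.

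Hence $\bfI_1\bfI_2\neq 0$ in $\cC_\bfM$, with some nonzero $\id_\rE = \id_{\rH_1\rF_1\rH_2}\circ_0\id_{\rK_1\rF_2\rK_2}\in \bfI_1\bfI_2 \subseteq \bfI_1\cap\bfI_2$. Passing to the dg idempotent completion $(\cC_\bfM)^\circ$, which preserves quotient-simplicity of $\bfM$ by Lemma \ref{lem-simpletrans-circ}, extract an indecomposable nonzero summand $\rH$ of $\rE$. Then $\id_\rH = p_\rH\circ\id_\rE\circ\iota_\rH$ for the canonical inclusion-projection morphisms, so $\id_\rH\in\bfI_1^\circ\cap\bfI_2^\circ$, giving $\J_\rH\succeq_J\J_1,\J_2$. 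Maximality then yields $\J_\rH = \J_1 = \J_2$. The principal obstacle will be rigorously justifying the summand-of-direct-sum translation in the dg setting, where the factorisation of $\id_Z$ via generators of the dg ideal holds only modulo the differential and so the summand relation lives in $\bfK\bfM^\circ$; one must verify its interaction with horizontal composition via the interchange law, and then reconcile the maximality of the cells $\J_1,\J_2$ in $\cC_\bfM$ with the extraction of $\rH$ in the idempotent completion.
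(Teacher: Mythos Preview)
Your overall strategy—show that the horizontal product ideal $\bfI_1\bfI_2$ is nonzero in $\cC_\bfM$, then extract an indecomposable summand that dominates both cells—is sound and leads to the same endgame as the paper. However, your execution contains a genuine misconception and an unnecessary detour.

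\textbf{The summand translation is wrong, but also unnecessary.} You claim that $\id_Z\in\bfM\bfI_i$ expresses $Z$ as a summand in $\bfK\bfM^\circ$, because the factorisation ``holds only modulo the differential''. This is not correct: since $\bfI_i$ is generated by identities and $\partial(a\circ\id_W\circ b)$ is again of that form, membership $\id_Z\in\bfM\bfI_i$ gives an \emph{exact} equality $\id_Z=\sum_k a_k\circ\id_{W_k}\circ b_k$, with $a_k,b_k$ arbitrary (not dg) morphisms. So there is no ``up to homotopy'', and since $a_k,b_k$ need not be dg, this is not a summand relation in $\bfK\bfM^\circ$ either. Fortunately your substitution goes through without any summand interpretation: apply the functor $\bfM(\rH_1^k\rF_1\rH_2^k)$ to the exact identity $\id_{Y_k}=\sum_l c_{kl}\circ\id_{\bfM(\rK_1^{kl}\rF_2\rK_2^{kl})V_{kl}}\circ d_{kl}$, observe that each middle term becomes $\id_{\bfM(\rH_1^k\rF_1\rH_2^k\rK_1^{kl}\rF_2\rK_2^{kl})V_{kl}}=0$ under the hypothesis, hence $\id_{\bfM(\rH_1^k\rF_1\rH_2^k)Y_k}=0$ for all $k$, and so $\id_{\bfM(\rF_2)Y_0}=0$, the desired contradiction. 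Drop all references to $\bfK\bfM^\circ$.

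\textbf{The passage to $(\cC_\bfM)^\circ$ is a detour you should avoid.} Once you have a nonzero $\rE$ in $\cC_\bfM$ with $\id_\rE\in\bfI_1\cap\bfI_2$, take a dg indecomposable direct summand of $\rE$ \emph{in $\cC_\bfM$ itself}; this is exactly the step the paper takes, and it sidesteps the reconciliation problem you flag between cells of $\cC_\bfM$ and of its idempotent completion.

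\textbf{Comparison with the paper.} The paper's argument is more direct and avoids both issues. It works at the level of objects in $\bfM$: for any nonzero $X$, it shows (using quotient-simplicity together with maximality of $\J$, which forces $\rF\rH X=0$ for all $\rH$ once $\rF' X=0$ for every $\rF'\in\J$) that some $\rF\in\J$ has $\rF X\neq 0$; iterating with $\J'$ applied to $\rF X$ yields $\rG\in\J'$ with $\rG\rF X\neq 0$. Thus $\rG\rF\neq 0$ in $\cC_\bfM$, and any dg indecomposable summand of $\rG\rF$ is $\succeq_L\rF$ and $\succeq_R\rG$, hence lies in $\J\cap\J'$. Your route via product ideals is a legitimate reformulation of the same idea, but the paper's object-level argument reaches the nonzero composite $\rG\rF$ in two clean steps without the bookkeeping of your double substitution.
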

\proof
Suppose both $\J$ and $\J'$ are two distinct maximal weak two-sided cells of $\cC_{\bfM}$. 
Let $X\in\bfM(\ti)$ be nonzero. Then there exists $\rF\in \J$ such that $\rF X\neq 0$. 
Indeed, assume, for a contradiction, that $\rF X= 0$ for all $\rF\in \J$. We claim that $\bfI_{\bfM}(\id_X)$ is a proper ideal in $\bfM$. Let $Y\in \bfM(\tj)$ be such that $\id_Y\in \bfI_{\bfM}(\id_X)$. Thus, w.l.o.g., $\id_Y=p\circ \id_{\rH X}\circ \iota$, for some $1$-morphism $\rH$ of $\cC$. Hence, 
$$\bfM(\rF)(\id_Y)=\bfM(\rF)(p)\circ\id_{\rF\rH X}\circ \bfM(\rF)(\iota)=0,$$
since $\rF\rH X=0$ by maximality of $\J$. This shows that $\bfI_{\bfM}(\id_X)$ has to be proper as $\J$ is not contained in $\Ann_{\cC}(\bfM)$. This contradicts quotient-simplicity of $\bfM$. 

By the same argument, there exists $\rG\in \J'$ with $\rG\rF X\neq 0$. Since $\id_{\rG\rF}\in \bfI_{\bfP_\ti}(\id_\rF)$, any dg indecomposable direct summand $\rH$ of $\rG\rF$ satisfies $\rH\succeq_L \rF$ and hence $\rH\succeq_J \rF$. Similarly,  
any dg indecomposable direct summand $\rH$ of $\rG\rF$ satisfies $\rH\succeq_R \rG$ and hence $\rH\succeq_J \rG$. Again, due to maximality of $\J$ and $\J'$, we deduce $\rH\in \J\cap\J'$, a contradiction.
\endproof

\begin{definition}
If $\bfM\in \cC\tworep$ is quotient-simple, we call the unique maximal weak two-sided cell in $\cC_{\bfM}$ the {\bf weak apex} of $\bfM$.
\end{definition}

\subsection{Pretriangulated \texorpdfstring{$2$}{2}-subrepresentations of principal dg \texorpdfstring{$2$}{2}-representations}\label{subsforcells}

Associated to the two left orders from Definition \ref{def:cells}, we obtain two possibly different dg $2$-subrepresentations of $\bfP_\ti$. 

\begin{definition} For a $1$-morphism $\rF$ in $\cC$, we define $\bfR^{\leq}_{\rF} = \bfG_{\bfP_\ti}(\rF)$. We further define 
$\bfR^{\preceq}_{\rF}$ to be the unique maximal pretriangulated $2$-subrepresentation of $\bfP_\ti$ such that $\bfR^{\preceq}_{\rF}$ is contained in $\bfI_{\bfP_\ti}(\id_\rF)$. 
\end{definition}

\begin{lemma}\label{indep} Let $\rF\in \cC(\ti,\tj)$ and $\rG\in \cC(\ti,\tk)$ be dg indecomposable.
\begin{enumerate}[(a)]
\item\label{indep1} If  $\rF\leq_L\rG$, then $\bfR^{\leq}_{\rF}\supseteq\bfR^{\leq}_{\rG}$. In particular, if  $\rF\steq_L\rG$, then $\bfR^{\leq}_{\rF}=\bfR^{\leq}_{\rG}$.
\item\label{indep2} If  $\rF\preceq_L\rG$, then $\bfR^{\preceq}_{\rF}\supseteq\bfR^{\preceq}_{\rG}$. In particular, if 
$\rF\sim_L\rG$, then $\bfR^{\preceq}_{\rF}=\bfR^{\preceq}_{\rG}$.
\end{enumerate}
\end{lemma}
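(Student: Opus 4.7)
The plan is that both parts should follow almost immediately by unwinding the definitions, with the main work being organisational rather than conceptual.

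For part (\ref{indep1}), I would simply observe that this is a restatement of the definition. Indeed, by Definition \ref{def:cells}(\ref{def1}), the relation $\rF\leq_L\rG$ is defined to mean $\bfG_{\bfP_\ti}(\rG)\subseteq \bfG_{\bfP_\ti}(\rF)$. Since $\bfR^{\leq}_\rF=\bfG_{\bfP_\ti}(\rF)$ and $\bfR^{\leq}_\rG=\bfG_{\bfP_\ti}(\rG)$ by definition, this reads precisely as $\bfR^{\leq}_\rG\subseteq \bfR^{\leq}_\rF$. The ``in particular'' statement is then antisymmetry: if $\rF\steq_L\rG$, applying the first part to both $\rF\leq_L\rG$ and $\rG\leq_L\rF$ yields $\bfR^{\leq}_\rF=\bfR^{\leq}_\rG$.

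For part (\ref{indep2}), the argument is similarly short but uses the maximality in the definition of $\bfR^{\preceq}_\rF$. By Definition \ref{def:cells}(\ref{def2}), $\rF\preceq_L\rG$ means $\bfI_{\bfP_\ti}(\id_\rG)\subseteq \bfI_{\bfP_\ti}(\id_\rF)$. By construction, $\bfR^{\preceq}_\rG$ is a pretriangulated $2$-subrepresentation of $\bfP_\ti$ contained in $\bfI_{\bfP_\ti}(\id_\rG)$, and hence, using the hypothesis, contained in $\bfI_{\bfP_\ti}(\id_\rF)$. Since $\bfR^{\preceq}_\rF$ is by definition the (unique) maximal pretriangulated $2$-subrepresentation of $\bfP_\ti$ with this containment property, we conclude $\bfR^{\preceq}_\rG\subseteq \bfR^{\preceq}_\rF$. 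The equivalence case $\rF\sim_L\rG$ again follows by symmetrising.

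I do not foresee any real obstacle; the only point worth pausing on is what ``containment of a pretriangulated $2$-subrepresentation in a dg ideal'' means, namely that all hom spaces (equivalently, all identity $2$-morphisms on objects) of the subrepresentation lie in the ideal. With this reading fixed, both implications are tautological consequences of the definitions. Existence of the maximal $\bfR^{\preceq}_\rF$ in the first place, needed to give the argument its teeth, is presumably established just before the lemma (as the sum of all pretriangulated $2$-subrepresentations contained in $\bfI_{\bfP_\ti}(\id_\rF)$, which clearly remains contained in the ideal), so no additional work is required here.
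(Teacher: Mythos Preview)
Your proposal is correct and follows essentially the same approach as the paper: both parts are immediate consequences of the definitions. The only cosmetic difference is that, for part~(\ref{indep2}), the paper argues via the explicit description of $\bfR^{\preceq}_{\rF}(\tj)$ as the full subcategory of $\bfP_\ti(\tj)$ on objects whose identities lie in $\bfI_{\bfP_\ti}(\id_\rF)$, whereas you invoke the maximality property directly; these are equivalent formulations of the same observation.
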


\proof
The statement in \eqref{indep1} follows immediately from the definition. Similarly, the second statement in  \eqref{indep2} follows from the first, so we only need to show that $\rF\preceq_L\rG$ implies $\bfR^{\preceq}_{\rF}\supseteq\bfR^{\preceq}_{\rG}$.
By definition, $\rF\preceq_L\rG$ if and only if $\bfI_{\bfP_\ti}(\id_\rG)\subseteq \bfI_{\bfP_\ti}(\id_\rF)$. The categories $\bfR^{\preceq}_{\rF}(\tj)$ are, however, precisely the full subcategories of  $\bfP_\ti(\tj)$ consisting of those objects whose identities are in $\bfI_{\bfP_\ti}(\id_\rF)$. The claim follows.
\endproof

Justified by the above lemma, we will often write $\bfR^{\leq}_{\L}$ if $\L$ is a strong left cell, and  $\bfR^{\preceq}_{\L}$ if $\L$ is a weak left cell.

The next lemma explains the relationship between the pretriangulated $2$-representations generated by a weak cell and a strong cell contained in it.

\begin{lemma}\label{Rdifference}
If $\L$ is a strong left cell which is contained in a weak left cell $\L'$, then $\bfR^{\leq}_{\L}$ is a pretriangulated $2$-subrepresentation of  $\bfR^{\preceq}_{\L'}$. Moreover, $Y\in \bfR^{\preceq}_{\L'}(\tj)$ if and only if $\id_Y=p\circ \id_{\rF X} \circ \iota$ for $X\in \L$ and $\rF$ a $1$-morphism in $\cC$.
\end{lemma}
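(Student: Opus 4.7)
The plan is to exploit a concrete description of the dg ideal $\bfI_{\bfP_\ti}(\id_X)$ for a fixed $X\in\L$. Since $\L\subseteq \L'$, Lemma \ref{indep}\eqref{indep2} gives $\bfR^\preceq_{\L'}=\bfR^\preceq_X$, and (as recorded in the proof of Lemma \ref{indep}) $\bfR^\preceq_X(\tj)$ is the full subcategory of $\bfP_\ti(\tj)$ whose objects $Z$ satisfy $\id_Z\in \bfI_{\bfP_\ti}(\id_X)$. Both halves of the lemma then reduce to understanding membership in this ideal.

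For the first assertion, recall that $\bfR^\leq_\L=\bfG_{\bfP_\ti}(X)$ is the thick closure of $\{\rH X\mid \rH\in\cC(\ti,\tj),\, \tj\in\cC\}$. Because $\bfI_{\bfP_\ti}(\id_X)$ is closed under the $\cC$-action, each $\id_{\rH X}=\bfP_\ti(\rH)(\id_X)$ already lies in this ideal. I would then verify that the property ``$\id_Z\in \bfI_{\bfP_\ti}(\id_X)$'' persists under the thick closure operations: shifts (where $\id_{Z\shift{1}}$ is essentially $\id_Z$), direct sums and direct summands (via $\id_{A\oplus B}=\iota_A\id_A\pi_A+\iota_B\id_B\pi_B$ and $\id_A=\pi_A\id_Z\iota_A$), and cones (via $\id_{C_f}=\iota_V\id_V\pi_V+\iota_{U\shift{1}}\id_{U\shift{1}}\pi_{U\shift{1}}$ as a morphism in $\overline{\cC(\ti,\tj)}$). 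Each such identity stays in the ideal by the two-sided ideal property and closure under sums, yielding $\bfR^\leq_\L\subseteq \bfR^\preceq_{\L'}$.

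For the ``moreover'' statement, I would describe $\bfI_{\bfP_\ti}(\id_X)$ explicitly. Since $\del(\id_X)=0$, the Leibniz rule shows that the differential of a morphism of the form $p\circ\id_{\rH X}\circ \iota$ remains of this form. As the ideal is generated by $\id_X$ under $\Bbbk$-linear combinations, left/right composition with arbitrary $2$-morphisms in $\bfP_\ti$, differentials, and the $\cC$-action by $1$-morphisms (with $2$-morphism compatibility automatic by naturality), every morphism in the ideal is a finite sum $\sum_{i=1}^n p_i\circ\id_{\rH_i X}\circ \iota_i$. The key step is to absorb this sum into a single term: using that $\cC$ is pretriangulated, $\rF=\bigoplus_{i=1}^n\rH_i$ exists as a $1$-morphism of $\cC$ with $\rF X=\bigoplus_{i=1}^n\rH_i X$, and setting $p=(p_1,\ldots,p_n)$ and $\iota=(\iota_1,\ldots,\iota_n)^T$ gives $p\circ\id_{\rF X}\circ\iota=\sum_i p_i\circ\id_{\rH_i X}\circ \iota_i$. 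Combined with the characterization of $\bfR^\preceq_{\L'}(\tj)$ recalled above, this yields the claimed equivalence.

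The main obstacle is the absorption step: it is routine once noticed, but it crucially uses that $\cC$ is pretriangulated, so $\bigoplus_i \rH_i$ is a genuine $1$-morphism of $\cC$ rather than merely an object of $\ov{\cC}$. The preceding argument that differentials and the $2$-morphism action generate no element outside the form $p\circ\id_{\rF X}\circ \iota$ rests on $\id_X$ being $\del$-closed and on naturality, respectively.
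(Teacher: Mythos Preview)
Your proposal is correct and takes essentially the same approach as the paper. The paper dismisses the first assertion as ``clear'' and, for the second, states without justification that every morphism in $\bfI_{\bfP_\ti}(\id_X)$ has the form $\sum_l a_l\circ\rF_l(\id_X)\circ b_l$, then performs exactly your absorption step via $\rF=\bigoplus_l\rF_l$; you simply fill in the details the paper omits (closure under thick-closure operations for the first part, and closure under $\del$ via $\del(\id_X)=0$ for the description of the ideal).
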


\proof
The first statement is clear. For the second statement, let $\Y$ be the full subcategory of $\bfP_{\ti}(\tj)$ consisting of objects $Y$ such that $\id_Y=p\circ \id_{\rF X} \circ \iota$ for $X\in \L$ and $\rF$ a $1$-morphism in $\cC$. It follows directly from the definition, that $\Y\subseteq \bfR^{\preceq}_{\L'}(\tj)$.

Conversely, let $Y\in \bfR^{\preceq}_{\L'}(\tj)$. This implies that $\id_Y$ is in the ideal $\bfI_{\bfP_\ti}(\id_X)$ for $X\in \L\subseteq \L'$. A morphism in $\bfI_{\bfP_\ti}(\id_X)$ is of the form 
$\sum_l a_l\circ \rF_l(\id_X)\circ b_l$ for morphisms $a_l, b_l \in \bfP_{\ti}(\tj)$ and $\rF_l\in \cC(\ti,\tj)$. 
Hence, we can write $\id_Y=a\circ \diag(\rF_l(\id_X) )\circ b$ where $a$ and $b$ are the row respectively column vectors of the $a_l$ respectively $b_l$. This completes the proof as $\diag(\rF_l(\id_X) )=\id_{\oplus \rF_l X}$.
\endproof

In the following, we compare ideals in $\bfR^{\leq}_{\L}$ and $\bfR^{\preceq}_{\L'}$.

\begin{lemma}\label{downup}
Let $\L$ be a strong left cell contained in a weak left cell $\L'$ and $\bfI$ be a dg ideal in $\bfR^{\preceq}_{\L'}$. Then the ideal generated by $\bfI\cap\bfR^{\leq}_{\L}$ inside $\bfR^{\preceq}_{\L'}$ equals $\bfI$.
\end{lemma}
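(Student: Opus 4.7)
The plan is to prove the two inclusions separately. The containment of the ideal generated by $\bfI\cap\bfR^{\leq}_{\L}$ inside $\bfI$ is immediate, since $\bfI$ is itself a dg ideal of $\bfR^{\preceq}_{\L'}$ that already contains $\bfI\cap\bfR^{\leq}_{\L}$. The real content is the reverse direction: every morphism of $\bfI$ must lie in the ideal generated by $\bfI\cap\bfR^{\leq}_{\L}$ inside $\bfR^{\preceq}_{\L'}$.

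To establish this, I would take an arbitrary morphism $f\colon Y_1\to Y_2$ in $\bfI(\tj)$ with $Y_1,Y_2\in \bfR^{\preceq}_{\L'}(\tj)$, and exhibit it as a composition $f=p_2\circ g\circ \iota_1$ with $g\in (\bfI\cap\bfR^{\leq}_{\L})(\tj)$ and $p_2,\iota_1$ morphisms of $\bfR^{\preceq}_{\L'}$. The tool for producing such a factorisation is Lemma \ref{Rdifference}, which allows me to write $\id_{Y_i}=p_i\circ \id_{\rF_i X_i}\circ \iota_i$ for some $X_i\in \L$, some $1$-morphism $\rF_i$ of $\cC$, and some (not necessarily dg) morphisms $p_i,\iota_i$ in $\bfP_{\ti_\L}(\tj)$. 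By the very definition of $\bfR^{\leq}_{\L}=\bfG_{\bfP_{\ti_\L}}(\L)$, each $\rF_i X_i$ lies in $\bfR^{\leq}_{\L}(\tj)$. Setting $g:=\iota_2\circ f\circ p_1\colon \rF_1 X_1\to \rF_2 X_2$ and substituting the factorisations of the identities into $f=\id_{Y_2}\circ f\circ \id_{Y_1}$ yields $f=p_2\circ g\circ \iota_1$, as required.

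It then remains only to verify $g\in \bfI(\tj)$, as the fact that $g$ is a morphism between objects of $\bfR^{\leq}_{\L}$ is automatic from the construction. This follows because $\bfI$ is a two-sided dg ideal of $\bfR^{\preceq}_{\L'}$, hence closed under pre- and postcomposition with arbitrary morphisms of $\bfR^{\preceq}_{\L'}$; and both $\iota_2$ and $p_1$ are such morphisms, since $\bfR^{\preceq}_{\L'}$ is a full pretriangulated subcategory of $\bfP_{\ti_\L}$ that contains each $Y_i$ and each $\rF_i X_i$ (the latter because $\bfR^{\leq}_{\L}\subseteq \bfR^{\preceq}_{\L'}$ by Lemma \ref{Rdifference}).

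I do not anticipate a genuine obstacle: the argument is a diagram-chase around the factorisation of identities supplied by Lemma \ref{Rdifference}. The only subtlety is the bookkeeping point that the auxiliary morphisms $p_i,\iota_i$ must be interpreted as morphisms inside $\bfR^{\preceq}_{\L'}$ (so that closure of $\bfI$ under composition applies), rather than only in the ambient principal $2$-representation $\bfP_{\ti_\L}$; this is the role played by the fullness of the pretriangulated $2$-subrepresentation $\bfR^{\preceq}_{\L'}\subseteq \bfP_{\ti_\L}$.
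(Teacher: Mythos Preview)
Your proposal is correct and follows essentially the same approach as the paper: both arguments use Lemma~\ref{Rdifference} to factor the identities on the source and target of an arbitrary $\alpha\in\bfI$ through objects of the form $\rF X$ in $\bfR^{\leq}_{\L}$, conclude that $\iota'\circ\alpha\circ p$ lies in $\bfI\cap\bfR^{\leq}_{\L}$, and recover $\alpha$ by composing back with $p',\iota$. Your write-up is slightly more explicit about why the auxiliary morphisms live in $\bfR^{\preceq}_{\L'}$ (via fullness), but the logic is identical.
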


\proof
It is clear that $\bfI$ contains the  ideal generated by $\bfI\cap\bfR^{\leq}_{\L}$ inside $\bfR^{\preceq}_{\L'}$. Conversely, if $\alpha\colon Y\to Y' \in \bfI(\tj)$ for some $\tj\in \cC$, then $\id_Y=p\circ\id_{\rF X}\circ \iota$, and $\id_{Y'}=p'\circ\id_{\rF' X}\circ \iota'$ for some $\rF X$, $\rF' X$ in $\bfR_{\L}^\leq(\tj)$ as in Lemma \ref{Rdifference}. Hence $\iota'\circ\alpha\circ p \in \bfI\cap \bfR_{\L}^\leq(\tj)$, and therefore, $\alpha = p'\circ\iota'\circ\alpha\circ p\circ \iota$ is in the ideal generated by $\bfI\cap \bfR_{\L}^\leq$. The claim follows.
\endproof

\subsection{Spectra of cells}\label{secspectra}

For $\bfM$ a pretriangulated $2$-representation, we define the {\bf spectrum} $\Spec(\bfM)$ to be the set of proper dg ideals of $\bfM$ and $\MaxSpec(\bfM)$ to be the subset consisting of those dg ideals which are maximal with respect to inclusion. In the following, we will compare $\MaxSpec(\bfR_\L^\leq)$ and $\MaxSpec(\bfR_{\L'}^\preceq)$, where $\L$ is a strong left cell contained in a weak left cell $\L'$.

\begin{lemma}\label{proper}
Assume $\L$ is a strong left cell which is contained in a weak left cell $\L'$, and $\bfI\in \Spec(\bfR_{\L}^\leq)$. Let $\bfI'$ be the ideal in $\bfR_{\L'}^\preceq$ generated by $\bfI$. Then $\bfI'$ is a proper dg ideal of $\bfR_{\L'}^\preceq$.
\end{lemma}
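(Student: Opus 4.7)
The plan is to reduce the statement to a direct application of Lemma \ref{idealofsub}. By the first sentence of Lemma \ref{Rdifference}, $\bfR_\L^\leq$ is a pretriangulated $2$-subrepresentation of $\bfR_{\L'}^\preceq$, so the hypotheses of Lemma \ref{idealofsub} are met with $\bfM = \bfR_{\L'}^\preceq$ and $\bfN = \bfR_\L^\leq$, and the given dg ideal $\bfI$ in $\bfN$ satisfies $\bfI' \cap \bfR_\L^\leq = \bfI$.

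Properness of $\bfI'$ then follows formally. Since $\bfI \in \Spec(\bfR_\L^\leq)$ is proper, there exist objects $\ti,\tj$ of $\cC$ and some $X \in \bfR_\L^\leq(\tj)$ with $\id_X \notin \bfI(\tj)$. Viewing $X$ as an object of $\bfR_{\L'}^\preceq(\tj)$, if $\id_X$ were to lie in $\bfI'(\tj)$, then it would lie in $(\bfI' \cap \bfR_\L^\leq)(\tj) = \bfI(\tj)$, contradicting the choice of $X$. Hence $\id_X \notin \bfI'(\tj)$ and $\bfI'$ is proper.

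I do not anticipate any real obstacle: the only substantive content is the identity $\bfI' \cap \bfR_\L^\leq = \bfI$, which is exactly what Lemma \ref{idealofsub} provides once one observes (via Lemma \ref{Rdifference}) that $\bfR_\L^\leq$ sits inside $\bfR_{\L'}^\preceq$ as a pretriangulated $2$-subrepresentation. The remainder is a one-line contradiction argument using the witness of properness of $\bfI$.
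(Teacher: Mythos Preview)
Your proposal is correct and follows essentially the same route as the paper. Both arguments hinge on Lemma~\ref{idealofsub} to obtain $\bfI'\cap\bfR_\L^\leq=\bfI$ and then derive a contradiction with the properness of $\bfI$; the only difference is cosmetic---you exhibit a single witness $X$ with $\id_X\notin\bfI$, whereas the paper phrases the contradiction in terms of $\bfI'$ containing all identities.
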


\proof
If $\bfI'$ contained all identities on objects in $\bfR_{\L'}^\preceq(\tj)$ for all objects $\tj$, then it would also contain all identities on objects in $\bfR_{\L}^\leq(\tj)$ for all objects $\tj$. This contradicts the fact that $\bfI=\bfI'\cap \bfR_{\L}^\leq$  (see Lemma \ref{idealofsub}) is in $\Spec(\bfR_{\L}^\leq)$.
\endproof

\begin{lemma}\label{goingup}
Assume $\L$ is a strong left cell which is contained in a weak left cell $\L'$, and $\bfI\in \MaxSpec(\bfR_{\L}^\leq)$. Let $\bfI'$ be the ideal in $\bfR_{\L'}^\preceq$ generated by $\bfI$. Then $\bfI'\in \MaxSpec(\bfR_{\L'}^\preceq)$.
\end{lemma}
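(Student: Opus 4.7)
The plan is to take an arbitrary dg ideal $\bfJ$ with $\bfI'\subseteq \bfJ\subsetneq \bfR_{\L'}^\preceq$ and show $\bfJ=\bfI'$, using Lemmas \ref{proper}, \ref{idealofsub}, \ref{Rdifference}, and \ref{downup}. By Lemma \ref{proper}, $\bfI'$ is already proper, so only maximality remains.

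First I would contract $\bfJ$ to the smaller pretriangulated $2$-subrepresentation by considering $\bfJ\cap \bfR_\L^\leq$, which is a dg ideal in $\bfR_\L^\leq$. Since $\bfI\subseteq \bfI'\subseteq \bfJ$ and $\bfI\subseteq \bfR_\L^\leq$, we get $\bfI\subseteq \bfJ\cap \bfR_\L^\leq$. By maximality of $\bfI$, it will then suffice to verify that $\bfJ\cap \bfR_\L^\leq$ is \emph{proper} in $\bfR_\L^\leq$; once established, maximality of $\bfI$ forces $\bfJ\cap \bfR_\L^\leq=\bfI$.

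The main technical point (and the only nontrivial step) is properness of $\bfJ\cap \bfR_\L^\leq$. I would argue by contradiction: if $\bfJ\cap \bfR_\L^\leq$ contained $\id_{\rF X}$ for every object $\rF X\in \bfR_\L^\leq(\tj)$ (equivalently, if this contraction were the full ideal), then I would use Lemma \ref{Rdifference}. Indeed, for any $Y\in \bfR_{\L'}^\preceq(\tj)$ there exist $X\in \L$, a $1$-morphism $\rF$, and $2$-morphisms $p,\iota$ with $\id_Y=p\circ \id_{\rF X}\circ \iota$. Writing $\id_Y$ in this factored form shows $\id_Y$ lies in the ideal generated by $\bfJ\cap \bfR_\L^\leq$ inside $\bfR_{\L'}^\preceq$; but by Lemma \ref{downup} that generated ideal is precisely $\bfJ$. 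Thus $\bfJ$ would contain every identity in $\bfR_{\L'}^\preceq$, contradicting $\bfJ\subsetneq \bfR_{\L'}^\preceq$.

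Having concluded $\bfJ\cap \bfR_\L^\leq=\bfI$, I would finish by invoking Lemma \ref{downup} once more: the ideal generated in $\bfR_{\L'}^\preceq$ by $\bfJ\cap \bfR_\L^\leq=\bfI$ equals $\bfJ$ on the one hand, and equals $\bfI'$ by definition on the other. Hence $\bfJ=\bfI'$, proving maximality. The main obstacle is the properness step, which is really a bookkeeping application of Lemma \ref{Rdifference}; everything else is formal manipulation with the ideal-contraction/generation adjunction packaged in Lemmas \ref{idealofsub} and \ref{downup}.
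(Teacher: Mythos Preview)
Your proof is correct and follows essentially the same approach as the paper: contract $\bfJ$ to $\bfR_\L^\leq$, use maximality of $\bfI$ together with properness (via Lemma~\ref{Rdifference}) to force $\bfJ\cap\bfR_\L^\leq=\bfI$, and then push back up. The only cosmetic difference is that you cleanly invoke Lemma~\ref{downup} for the final step, whereas the paper instead writes out the explicit factorisation $\alpha=p'\circ(\iota'\alpha p)\circ\iota$ by hand---which is precisely the content of Lemma~\ref{downup} in this case.
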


\proof By Lemma \ref{proper}, $\bfI' \in \Spec(\bfR_{\L'}^\preceq)$. Suppose $\bfJ \in \Spec(\bfR_{\L'}^\preceq)$ strictly containing $\bfI'$. Consider $\bfJ\cap  \bfR_{\L}^\leq$. This ideal contains $\bfI$, hence by maximality of $\bfI$, $\bfJ\cap  \bfR_{\L}^\leq$ is either equal to $\bfI$ or $\bfR_{\L}^\leq$.

If $\bfJ\cap  \bfR_{\L}^\leq=\bfR_{\L}^\leq$, it contains $\id_X$ for any $X\in \L$. However, since $\L\subseteq \L'$, by definition,  $\bfR_{X}^\preceq=\bfR_{\L'}^\preceq$, so $\bfJ=\bfR_{\L'}^\preceq$, which contradicts $\bfJ\in \MaxSpec(\bfR_{\L'}^\preceq)$. Hence $\bfJ\cap  \bfR_{\L}^\leq=\bfI$. 

Suppose $\alpha\colon Y\to Y'$ is a morphism in $\bfJ$, which is not in $\bfI'$. Then $\id_Y=p\circ\id_{\rF X}\circ \iota$, and $\id_{Y'}=p'\circ\id_{\rF' X}\circ \iota'$ as in Lemma \ref{Rdifference}, and hence $\alpha= p'\circ\id_{\rF' X}\circ \iota'\circ \alpha \circ p\circ\id_{\rF X}\circ \iota$. Therefore, $\iota'\circ\alpha\circ p\in \bfJ\cap \bfR_{\L}^\leq=\bfI$. Hence $\alpha = p'\circ\iota'\circ\alpha\circ p\circ \iota \in \bfI'$, which is a contradiction. This completes the proof.\endproof

\begin{lemma}\label{goingdown}
Assume $\L'$ is weak left cell and $\L$ a strong left cell contained in $\L'$. Let $\bfI\in \MaxSpec(\bfR_{\L'}^\preceq)$. Then $\bfI\cap \bfR_{\L}^\leq\in  \MaxSpec(\bfR_{\L}^\leq)$.
\end{lemma}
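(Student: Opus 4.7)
The plan is to establish two things: that $\bfI\cap\bfR_{\L}^\leq$ is a proper dg ideal of $\bfR_{\L}^\leq$, and that it is maximal among such. Both steps hinge on combining the going-up statement in Lemma \ref{downup} with the going-down statement in Lemma \ref{idealofsub}, together with the description of objects in $\bfR_{\L'}^\preceq$ provided by Lemma \ref{Rdifference}.

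For properness, I would argue by contradiction. If $\bfI\cap\bfR_{\L}^\leq$ contained all identities of $\bfR_{\L}^\leq$, then it would in particular contain $\id_X$ for some $X\in \L$. By Lemma \ref{downup}, the dg ideal of $\bfR_{\L'}^\preceq$ generated by $\bfI\cap\bfR_{\L}^\leq$ equals $\bfI$, so $\id_X\in \bfI$. Since $\L\subseteq \L'$, Lemma \ref{indep} gives $\bfR_{X}^\preceq=\bfR_{\L'}^\preceq$. Now, for any object $Y$ of $\bfR_{\L'}^\preceq(\tj)$, Lemma \ref{Rdifference} provides a factorisation $\id_Y=p\circ \id_{\rF X}\circ \iota$ for some $1$-morphism $\rF$ in $\cC$; since $\bfR_{\L'}^\preceq$ is closed under the $\cC$-action and is a full dg $2$-subrepresentation, this factorisation takes place inside $\bfR_{\L'}^\preceq$, so $\id_Y\in \bfI$. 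This contradicts $\bfI\in\MaxSpec(\bfR_{\L'}^\preceq)$.

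For maximality, suppose $\bfJ$ is a proper dg ideal of $\bfR_{\L}^\leq$ with $\bfJ\supsetneq \bfI\cap\bfR_{\L}^\leq$. I would produce a contradiction to the maximality of $\bfI$ by considering the dg ideal $\bfJ'$ of $\bfR_{\L'}^\preceq$ generated by $\bfJ$. Lemma \ref{proper}, applied with $\bfJ$ in place of $\bfI$, shows $\bfJ'$ is a proper dg ideal of $\bfR_{\L'}^\preceq$. Moreover, since $\bfJ\supseteq \bfI\cap\bfR_{\L}^\leq$ and the ideal generated by the latter in $\bfR_{\L'}^\preceq$ equals $\bfI$ by Lemma \ref{downup}, we have $\bfJ'\supseteq \bfI$. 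By Lemma \ref{idealofsub}, $\bfJ'\cap \bfR_{\L}^\leq=\bfJ$, which differs from $\bfI\cap\bfR_{\L}^\leq$, forcing $\bfJ'\neq\bfI$ and hence $\bfJ'\supsetneq\bfI$, contradicting $\bfI\in\MaxSpec(\bfR_{\L'}^\preceq)$.

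The main obstacle is simply bookkeeping the interaction between the two $2$-subrepresentations; once the extension and restriction of ideals are known to be mutually inverse in the sense of Lemmas \ref{downup} and \ref{idealofsub}, the argument is formally a standard Galois-correspondence-style deduction, and the only nontrivial ingredient is the factorisation from Lemma \ref{Rdifference} used to rule out the non-proper case.
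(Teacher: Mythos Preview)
Your argument is correct and follows essentially the same extension/restriction strategy as the paper. The only cosmetic difference is in the maximality step: you invoke Lemma~\ref{proper} to keep $\bfJ'$ proper and then contradict maximality of $\bfI$, whereas the paper uses maximality of $\bfI$ to force $\bfJ'=\bfR_{\L'}^\preceq$ and then, via Lemma~\ref{idealofsub}, obtains $\bfJ=\bfR_{\L}^\leq$, contradicting properness of $\bfJ$---two sides of the same coin.
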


\proof
We first show that $\bfI\cap \bfR_{\L}^\leq\in \Spec(\bfR_{\L}^\leq)$. Assume, for a contradiction, that it contains $\id_X$ for any $X\in\L$. Since $X\in \L'$, $\bfI$ contains the ideal generated by $\id_X$ in $\bfR_{\L'}^\preceq$, which equals $\bfR_{\L'}^\preceq$. This contradicts $\bfI\in \Spec(\bfR_{\L'}^\preceq)$.

We now claim that $\bfI\cap \bfR_{\L}^\leq$ is maximal. Assume $\bfJ\in \Spec(\bfR_{\L}^\leq)$ strictly containing $\bfI\cap  \bfR_{\L}^\leq$, so that there exists $\alpha\in \bfJ(\tj)$ for some $\tj\in \cC$, but $\alpha \notin \bfI(\tj)$. Let $\bfJ'$ be the ideal in $\bfR_{\L'}^\preceq$ generated by $\bfJ$ and note that, by Lemma \ref{downup}, $\bfJ'\supseteq \bfI$.
Then $\alpha \in \bfJ'(\tj)$ but $\alpha \notin \bfI(\tj)$. 
Maximality  of $\bfI$ implies $\bfJ'=\bfR_{\L'}^\preceq$, so, by Lemma \ref{downup}, $\bfJ=\bfJ'\cap \bfR_{\L}^\leq= \bfR_{\L}^\leq$ contradicting $\bfJ\in \Spec(\bfR_{\L}^\leq)$. This completes the proof.
\endproof

\begin{theorem}\label{maxspecbij}
If $\L$ is a strong left cell contained in a weak left cell $\L'$, then there is a bijection between $\MaxSpec(\bfR_{\L}^\leq)$ and $\MaxSpec(\bfR_{\L'}^\preceq)$, sending an ideal $\bfI\in\MaxSpec(\bfR_{\L}^\leq)$ to the ideal generated by $\bfI$ in $\bfR_{\L'}^\preceq$, and an ideal $\bfJ\in \MaxSpec(\bfR_{\L'}^\preceq)$ to $\bfJ\cap \bfR_{\L}^\leq$.
\end{theorem}

\proof
Let $\bfI\in\MaxSpec(\bfR_{\L}^\leq)$. Then the ideal $\bfI'$ generated by $\bfI$ in $\bfR_{\L'}^\preceq$ is in $\MaxSpec(\bfR_{\L'}^\preceq)$ by Lemma \ref{goingup}. 
Conversely, given $\bfJ\in \MaxSpec(\bfR_{\L'}^\preceq)$, Lemma \ref{goingdown} shows that $\bfJ\cap \bfR_{\L}^\leq$ is in $\MaxSpec(\bfR_{\L}^\leq)$. 

Since  $\bfI'\cap \bfR_{\L}^\leq=\bfI$ (see Lemma \ref{idealofsub}), and the ideal generated by $\bfJ\cap \bfR_{\L}^\leq$ inside $\bfR_{\L'}^\preceq$ is $\bfJ$ (see Lemma \ref{downup}), these two constructions are mutual inverses.
\endproof

\begin{lemma}\label{maxrestrict}
Let $\L,\L'$ be weak left cells such that $\L\preceq_L\L'$. Let $\bfI\in \MaxSpec(\bfR_{\L}^\preceq)$. Then the restriction of $\bfI$ to $\bfR^\preceq_{\L'}$ is either in $\MaxSpec(\bfR_{\L'}^\preceq)$ or equal to $\bfR^\preceq_{\L'}$.
\end{lemma}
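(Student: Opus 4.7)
The key observation is that since $\L\preceq_L\L'$, Lemma \ref{indep}\eqref{indep2} gives $\bfR^\preceq_{\L'}\subseteq\bfR^\preceq_{\L}$ as a full pretriangulated $2$-subrepresentation of the common principal dg $2$-representation $\bfP_{\ti_{\L}}$ (noting that $\ti_{\L}=\ti_{\L'}$ is forced by the definition of $\preceq_L$). Structurally, the argument will closely mirror Lemma \ref{goingdown}, only with the roles of ``strong inside weak'' swapped for ``weak inside weak''; the ``either/or'' in the conclusion reflects the fact that, unlike in Lemma \ref{goingdown}, there is no a priori reason forcing the restriction of $\bfI$ to be proper, so the case where it exhausts $\bfR^\preceq_{\L'}$ has to be admitted.

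The plan is as follows. Assuming $\bfI\cap\bfR^\preceq_{\L'}$ is proper (else there is nothing to prove), I would argue maximality by contradiction: given a proper ideal $\bfJ$ of $\bfR^\preceq_{\L'}$ strictly containing $\bfI\cap\bfR^\preceq_{\L'}$, I let $\bfJ''$ denote the ideal in $\bfR^\preceq_{\L}$ generated by $\bfJ$ and set $\bfJ':=\bfJ''+\bfI$. Any morphism in $\bfJ$ not lying in $\bfI\cap\bfR^\preceq_{\L'}$ belongs to $\bfR^\preceq_{\L'}$ but not to $\bfI$, and so witnesses that $\bfJ'$ strictly contains $\bfI$. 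Maximality of $\bfI$ then forces $\bfJ'=\bfR^\preceq_{\L}$.

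The main step is to establish $\bfJ'\cap\bfR^\preceq_{\L'}=\bfJ$; combined with $\bfJ'=\bfR^\preceq_{\L}$, this immediately yields $\bfJ=\bfR^\preceq_{\L'}$, contradicting properness. For the nontrivial inclusion, any morphism $\alpha\in\bfJ'$ between objects $Y,Y'\in\bfR^\preceq_{\L'}$ decomposes as $\alpha=\beta+\gamma$ with $\beta\in\bfJ''$ and $\gamma\in\bfI$; since $\bfR^\preceq_{\L'}\hookrightarrow\bfR^\preceq_{\L}$ is fully faithful, both summands individually lie in $\Hom_{\bfR^\preceq_{\L'}}(Y,Y')$, whence $\beta\in\bfJ''\cap\bfR^\preceq_{\L'}=\bfJ$ by Lemma \ref{idealofsub}, and $\gamma\in\bfI\cap\bfR^\preceq_{\L'}\subseteq\bfJ$ by hypothesis. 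The main (and rather minor) obstacle is precisely this verification that restriction commutes with the sum of ideals, which rests on the full-subcategory property just invoked; beyond that, the argument is a clean dual of the maximality step in the proof of Lemma \ref{goingdown}.
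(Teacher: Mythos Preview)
Your proof is correct and follows essentially the same approach as the paper's: both form $\bfJ''+\bfI$ (the paper writes $\bfJ^{\L}+\bfI$), use maximality of $\bfI$ to conclude this equals $\bfR^\preceq_{\L}$, then restrict back to $\bfR^\preceq_{\L'}$ and invoke Lemma~\ref{idealofsub} to recover $\bfJ$, yielding the contradiction $\bfJ=\bfR^\preceq_{\L'}$. Your version is marginally cleaner in that you take $\bfJ$ to be an arbitrary proper ideal strictly containing the restriction (the paper unnecessarily chooses $\bfJ$ maximal) and you explicitly justify why restriction commutes with the sum of ideals, a step the paper simply asserts.
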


\proof
For $\bfJ\in \Spec(\bfR_{\L}^\preceq)$, we write $\bfJ_{\L'}$ for its restriction to $\bfR^\preceq_{\L'}$ and, for
 $\bfJ\in \Spec(\bfR_{\L'}^\preceq)$, we write $\bfJ^{\L}$ for the ideal generated by $\bfJ$ in $\bfR^\preceq_{\L}$.
 

 Assume $\bfI_{\L'}$ is not maximal and choose $\bfJ\in \MaxSpec(\bfR_{\L'}^\preceq)$ with $\bfI_{\L'}\subsetneq \bfJ$. This implies that there exists a morphism $f$ in some $\bfJ(\tj)$ with $f\notin \bfI(\tj)$. Hence $\bfI\subsetneq \bfJ^\L +\bfI$. By maximality of $\bfI$, $\bfJ^\L +\bfI$ is equal to $\bfR^\preceq_{\L}$.
 
So $(\bfJ^\L +\bfI)_{\L'}=(\bfJ^\L)_{\L'} +\bfI_{\L'}$ equals $\bfR^\preceq_{\L'}$. 

Moreover, since $(\bfJ^\L)_{\L'}\supseteq \bfJ \supsetneq \bfI_{\L'}$, we see that $(\bfJ^\L +\bfI)_{\L'}=(\bfJ^\L)_{\L'} = \bfJ$ where the last equality follows form Lemma \ref{easyideals} \eqref{idealtrivial}. 

Thus $\bfJ = \bfR^\preceq_{\L'}$, which contradicts $\bfJ\in \MaxSpec(\bfR_{\L'}^\preceq)$.
\endproof

\subsection{Construction of strong and weak cell \texorpdfstring{$2$}{2}-representations}\label{cell2section}
We now introduce weak and strong dg cell $2$-representations, the main constructions of this article.

\begin{definition}\label{def:cell2rep}
For $\bfI$ in $\MaxSpec(\bfR_{\L}^\leq)$ (respectively, $\MaxSpec(\bfR_{\L}^\preceq)$), we define the {\bf strong} (respectively, {\bf weak}) {\bf cell $2$-representation} $\bfC_\bfI^\leq$  (respectively, $\bfC_\bfI^\preceq$) to be the quotient of $\bfR_\L^\leq$ (respectively, $\bfR_\L^\preceq$) by $\bfI$. 
\end{definition}

\begin{example}\label{geomex} Recall the setup  of Example \ref{geomex1}. 
By construction, there is a unique weak left, right and two-sided cell $\L$, namely that containing $\one$. Moreover, $\bfR_{\L}^\preceq$ is simply the unique principal pretriangulated $2$-representation $\bfP$. Then $\Spec(\bfR_{\L}^\preceq)$ (respectively, $\MaxSpec(\bfR_{\L}^\preceq)$) is given by the usual (maximal) spectrum $\Spec(R)$ (respectively,  $\MaxSpec(R)$), recovering the affine scheme associated to $R$.
The category underlying the cell $2$-representation corresponding to $\mathbf{m}\in \MaxSpec(R)$ is given by bounded complexes of free modules over $R/\mathbf{m}$.
\end{example}

\begin{proposition}\label{cellinclude}
Let $\L$ be a strong left cell contained in a weak left cell $\L'$, $\bfI\in \MaxSpec(\bfR_{\L}^\leq)$ and $\bfI'$ the ideal generated by $\bfI$ in $\bfR_{\L'}^\preceq$, which is in $\MaxSpec(\bfR_{\L'}^\preceq)$ by Lemma \ref{goingup}. Then $\bfC_\bfI^\preceq\in \cC\tworep$ and $\bfC_\bfI^\leq$ is a pretriangulated $2$-subrepresentation of $\bfC_{\bfI'}^\preceq$.
\end{proposition}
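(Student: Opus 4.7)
The plan is twofold: first establish that the quotient $\bfC_{\bfI'}^\preceq$ (as I read the statement) genuinely belongs to $\cC\tworep$, and then produce a fully faithful morphism $\bfC_\bfI^\leq \hookrightarrow \bfC_{\bfI'}^\preceq$ realising the former as a pretriangulated $2$-subrepresentation of the latter. The key tools will be Lemmas \ref{Rdifference}, \ref{idealofsub}, and \ref{quotientfaithful}.

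For the first part, I would argue componentwise: for each $\tj\in\cC$, the category $\bfR_{\L'}^\preceq(\tj)$ is pretriangulated and $\bfI'(\tj)$ is a dg ideal in it, so the quotient $\bfR_{\L'}^\preceq(\tj)/\bfI'(\tj)$ is again pretriangulated, since shifts and cones of morphisms descend cleanly from numerator to quotient. The $\cC$-action descends because $\bfI'$ is preserved by the $\cC$-action, being a dg ideal of the pretriangulated $2$-representation $\bfR_{\L'}^\preceq$. This gives $\bfC_{\bfI'}^\preceq\in \cC\tworep$.

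Next, compose the inclusion $\bfR_\L^\leq \hookrightarrow \bfR_{\L'}^\preceq$ (coming from Lemma \ref{Rdifference}, since every object $\rG X$ with $X\in\L$ lies in $\bfR_{\L'}^\preceq$) with the quotient map $\bfR_{\L'}^\preceq \twoheadrightarrow \bfC_{\bfI'}^\preceq$ to obtain a morphism $\Phi\colon \bfR_\L^\leq \to \bfC_{\bfI'}^\preceq$ of pretriangulated $2$-representations. Since $\bfI'$ is, by construction, the ideal generated by $\bfI$ in $\bfR_{\L'}^\preceq$, Lemma \ref{idealofsub} yields $\bfI'\cap \bfR_\L^\leq = \bfI$. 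Hence $\Phi$ annihilates precisely $\bfI$ and so factors through $\bfC_\bfI^\leq=\bfR_\L^\leq/\bfI$. Lemma \ref{quotientfaithful} then guarantees that the factored map $\bfC_\bfI^\leq \to \bfC_{\bfI'}^\preceq$ is fully faithful on each component.

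Finally, I would check that the essential image of this inclusion meets the closure properties required of a pretriangulated $2$-subrepresentation of $\bfC_{\bfI'}^\preceq$. Closure under shifts, cones, biproducts, and the $\cC$-action is inherited directly from the corresponding closure of $\bfR_\L^\leq$ inside $\bfR_{\L'}^\preceq$, because the quotient functor is the identity on objects and preserves all these constructions. The main obstacle will be the closure under those direct summands that exist in $\bfC_{\bfI'}^\preceq$: new splittings can in principle appear after passing to the quotient. Here one leverages the characterisation in Lemma \ref{Rdifference} of objects of $\bfR_{\L'}^\preceq$ as retracts of $\rF X$ for some $X\in\L$, together with the fact that $\bfR_\L^\leq$ is the thick closure of the $\cC$-orbit of $\L$ inside $\bfP_\ti$, to conclude that any object of $\bfR_{\L'}^\preceq$ arising as a summand of an object of $\bfR_\L^\leq$ already lies in $\bfR_\L^\leq$, so the embedding into $\bfC_{\bfI'}^\preceq$ is indeed a pretriangulated $2$-subrepresentation.
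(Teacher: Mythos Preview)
Your core argument matches the paper's proof almost exactly: the paper also observes $\bfR_\L^\leq \hookrightarrow \bfR_{\L'}^\preceq$, uses $\bfI' \cap \bfR_\L^\leq = \bfI$, and invokes Lemma~\ref{quotientfaithful} to conclude that the induced map $\bfC_\bfI^\leq \to \bfC_{\bfI'}^\preceq$ is fully faithful. The only cosmetic differences are that the paper cites Lemma~\ref{easyideals}\eqref{quotientpretri} for pretriangulatedness of the quotients where you argue directly, and the paper does not explicitly name Lemmas~\ref{Rdifference} and~\ref{idealofsub} for the inclusion and the intersection identity.

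Where you go beyond the paper is your final paragraph on closure under direct summands. You correctly flag that new splittings can appear in the quotient, but then your argument only shows that direct summands in $\bfR_{\L'}^\preceq$ of objects of $\bfR_\L^\leq$ already lie in $\bfR_\L^\leq$; this is closure at the level of $\bfR$, not at the level of $\bfC$. A summand of $Z\in\bfC_\bfI^\leq(\tj)$ that exists only in $\bfC_{\bfI'}^\preceq(\tj)$ is witnessed by an idempotent modulo $\bfI'$, and nothing you wrote lifts such an idempotent back to $\bfR_{\L'}^\preceq$. So this paragraph does not establish what you claim. That said, the paper's own proof simply stops at ``fully faithful'' and does not address thickness in the quotient at all, so your attempt is already more than the paper provides; you have just not closed the gap you identified.
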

\proof
We know that $\bfR_\L^\leq$ is a pretriangulated $2$-subrepresentation of $\bfR_{\L'}^\preceq\in \cC\tworep$. Since $\bfI\subseteq \bfI'$, we obtain an induced morphism of pretriangulated $2$-representations
$$\Phi_\bfI\colon \bfC_\bfI^\leq=\bfR_\L^\leq/\bfI\longrightarrow \bfC_{\bfI'}^\preceq=\bfR_{\L'}^\preceq/{\bfI'}.$$
By definition, $\bfC_\bfI^\leq$ and $\bfC_{\bfI'}^\preceq$ are pretriangulated.
Since $\bfI'\cap \bfR_\L^\leq=\bfI$ by Theorem \ref{maxspecbij}, it follows from Lemma \ref{quotientfaithful} that $\Phi_\bfI$ is fully faithful.  
This concludes the proof.
\endproof

Furthermore, by Lemma \ref{easyideals}\eqref{quotientpretri1}, $\bfC_\bfI^\leq$ and $\bfC_{\bfI'}^\preceq$ are quotient-simple pretriangulated $2$-representations of $\cC$, and  $\bfC_\bfI^\leq$  is, in addition, cyclic, while $\bfC_{\bfI'}^\preceq$ is only weakly $\cC$-generated by any $1$-morphism contained in the associated cell.

\begin{corollary}
Let $\L$ be a strong left cell inside a weak left cell $\L'$. Let $\bfI'\in\MaxSpec(\bfR_{\L'}^\preceq)$  such that $\bfC_{\bfI'}^\preceq$ is acyclic. Let $\bfI\in\MaxSpec(\bfR_{\L}^\leq)$ be the ideal associated to $\bfI'$ by Lemma \ref{goingdown}. Then $\bfC^{\leq}_{\bfI}$ is acyclic.
\end{corollary}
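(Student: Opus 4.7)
The plan is to exploit the fully faithful embedding $\Phi_{\bfI}\colon \bfC_\bfI^\leq \hookrightarrow \bfC_{\bfI'}^\preceq$ produced by Proposition \ref{cellinclude} to transfer the contracting homotopies witnessing acyclicity from the ambient weak cell $2$-representation down to the strong one. Concretely, for any $\tj\in\cC$ and any object $X\in \bfC_\bfI^\leq(\tj)$, I would consider the image $\Phi_{\bfI,\tj}(X)\in \bfC_{\bfI'}^\preceq(\tj)$ and invoke acyclicity of $\bfC_{\bfI'}^\preceq$ to obtain a morphism $g\in \End(\Phi_{\bfI,\tj}(X))$ of degree $-1$ with $\del(g)=\id_{\Phi_{\bfI,\tj}(X)}$.

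The next step is to lift $g$ back to an endomorphism of $X$. Since $\Phi_{\bfI,\tj}$ is a dg functor which is fully faithful in the sense of Lemma \ref{quotientfaithful} (i.e.\ it induces an isomorphism on Hom-complexes), there is a unique morphism $f\in \End(X)$ of degree $-1$ with $\Phi_{\bfI,\tj}(f)=g$. Because $\Phi_{\bfI,\tj}$ commutes with the differential and $\Phi_{\bfI,\tj}(\id_X)=\id_{\Phi_{\bfI,\tj}(X)}$, we get
\[
\Phi_{\bfI,\tj}(\del f)=\del(\Phi_{\bfI,\tj}(f))=\del g=\id_{\Phi_{\bfI,\tj}(X)}=\Phi_{\bfI,\tj}(\id_X),
\]
and faithfulness of $\Phi_{\bfI,\tj}$ then forces $\del f=\id_X$, so $X$ is acyclic. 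Since $\tj$ and $X$ were arbitrary, $\bfC_\bfI^\leq$ is acyclic.

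I do not expect a real obstacle: the whole argument is formal once one has the fully faithful dg embedding from Proposition \ref{cellinclude} (which in turn rests on $\bfI=\bfI'\cap \bfR_\L^\leq$ from Lemmas \ref{idealofsub} and \ref{goingdown}). The only mildly delicate point is to confirm that ``fully faithful'' in Lemma \ref{quotientfaithful} is understood at the dg level, so that the lift $f$ of $g$ has the correct degree and interacts correctly with $\del$; this is immediate from the construction of the quotient dg category.
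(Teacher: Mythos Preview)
Your proof is correct and follows the same approach as the paper: both use the fully faithful dg embedding $\bfC_\bfI^\leq \hookrightarrow \bfC_{\bfI'}^\preceq$ from Proposition~\ref{cellinclude} to inherit acyclicity. The paper states this in a single line, while you spell out explicitly how full faithfulness at the dg level lets one pull back the contracting homotopy; this elaboration is accurate and the ``delicate point'' you flag is indeed harmless since the quotient functors are dg functors by construction.
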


\proof
Since $\bfC^{\leq}_{\bfI}$ is a dg $2$-subrepresentation of $\bfC^\preceq_{\bfI'}$, the claim follows from Proposition \ref{cellinclude}.
\endproof

We say that a (weak or strong) left cell $\L$ is {\bf acyclic} if for any $1$-morphism $\rX$ of $\L$ there exists an endomorphism $f$ of $\rX$ such that $\del(f)=\id_\rX$. The following observation is almost immediate.

\begin{lemma}
Let $\L$ be an acyclic strong (resp. weak) left cell and $\bfI\in \MaxSpec(\bfR_{\L}^\leq)$ (resp. $\bfI\in \MaxSpec(\bfR_{\L}^\preceq)$). Then $\bfC^{\leq}_{\bfI}$ (resp. $\bfC^\preceq_{\bfI}$) is acyclic.
\end{lemma}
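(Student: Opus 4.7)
The plan is to show in each case that the generating $2$-representation $\bfR^{\leq}_{\L}$ (resp. $\bfR^{\preceq}_{\L}$) already consists entirely of acyclic objects; acyclicity then immediately descends to the quotient $\bfC^{\leq}_{\bfI}$ (resp. $\bfC^{\preceq}_{\bfI}$), because if $\del f = \id_Y$ in the source, then $\del \bar f = \overline{\id_Y} = \id_{\bar Y}$ in the quotient.

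For the strong case, I first observe that acyclicity is preserved by the $\cC$-action: if $\rX \in \L$ has $h \in \End(\rX)$ of degree $-1$ with $\del h = \id_\rX$, then for any $1$-morphism $\rG \in \cC(\ti_\L, \tj)$, the composition $\id_\rG \circ_0 h$ is a contraction of $\rG \rX$ by Leibniz. Second, an object of a pretriangulated dg category is acyclic if and only if it is isomorphic to $0$ in the associated homotopy (triangulated) category. Since the class of zero objects in a triangulated category is closed under shifts, cones, and direct summands, so is the class of acyclic objects in the dg enrichment. As $\bfR^{\leq}_{\L}(\tj)$ is by definition the thick closure of $\{\rG \rX : \rG \in \cC(\ti_\L,\tj),\ \rX \in \L\}$ inside $\bfP_{\ti_\L}(\tj)$, every object therein is acyclic, and the claim for $\bfC^{\leq}_{\bfI}$ follows.

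For the weak case, the strategy is parallel but more delicate, since $\bfR^{\preceq}_{\L}$ is characterized not as a thick closure but via the dg-ideal condition $\id_Y \in \bfI_{\bfP_{\ti_\L}}(\id_\rX)$. Using Lemma \ref{Rdifference}, any $Y \in \bfR^{\preceq}_{\L}(\tj)$ admits a factorization $\id_Y = p \circ \id_{\rF \rX} \circ \iota$ with $p, \iota$ of degree $0$ but not necessarily chain maps. Setting $f := p h \iota$, where $h$ contracts $\rF \rX$, yields by Leibniz $\del f = \id_Y + R$ with $R := \del p \cdot h \iota - p h \del \iota$. The principal obstacle is that $R$ is generally non-zero when $p, \iota$ fail to be chain maps, so the naive contraction $f$ does not directly witness acyclicity of $Y$. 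I would resolve this by passing to the homotopy $2$-representation $\bfK \bfC^{\preceq}_{\bfI}$: the image $\bar\rX$ remains acyclic in $\bfC^{\preceq}_{\bfI}$ (acyclicity descends to quotients), hence $\bar\rX \cong 0$ in $\bfK \bfC^{\preceq}_{\bfI}$; combined with the fact that $\bfC^{\preceq}_{\bfI}$ is weakly $\cC$-generated by $\bar\rX$ (inherited from $\bfR^{\preceq}_{\L}$), every $\id_{\bar Y}$ maps into the $\cK\cC$-ideal generated by $\id_{\bar\rX} = 0$, which is zero. Hence $\bar Y \cong 0$ in $\bfK \bfC^{\preceq}_{\bfI}$, i.e., $\bar Y$ is acyclic.
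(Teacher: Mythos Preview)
Your strong case is correct, and in fact spells out more than the paper does: the paper simply observes that each $\rX\in\L$ remains acyclic in the quotient (since $\bfI$, being a proper dg ideal, cannot contain $f_\rX$ without also containing $\id_\rX=\del f_\rX$) and then asserts the conclusion. Your thick-closure argument makes explicit why acyclicity of the generator forces acyclicity of every object, which the paper leaves implicit.

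For the weak case, however, your proposed fix has a genuine gap. You correctly note that the naive contraction $f=p(\rH h)\iota$ only gives $\del f = \id_Y + R$ with an error term $R$ coming from $\del p$ and $\del\iota$. But the passage to $\bfK\bfC^\preceq_\bfI$ does not repair this. Weak $\cC$-generation means $\id_{\bar Y} = \bar p\,\id_{\rH\bar\rX}\,\bar\iota$ in the dg category $\bfC^\preceq_\bfI(\tj)$, where $\bar p,\bar\iota$ are degree-$0$ morphisms \emph{not required to be annihilated by $\del$} (cf.\ Remark~\ref{explicit}). Such morphisms do not define arrows in $\bfK\bfC^\preceq_\bfI(\tj)$ at all, so there is no sense in which this factorisation places $\id_{\bar Y}$ in an ideal of the homotopy $2$-representation generated by $\id_{\bar\rX}$. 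For a concrete illustration that the inference ``$\id_Y$ is a non-dg retract of an acyclic object, hence $Y$ is acyclic'' fails: in $\Bbbk\dgmod$, take $Y=\Bbbk$ and $\rH\rX = C_{\id_\Bbbk}$; then $\id_Y$ factors through $\rH\rX$ via non-dg maps, $\rH\rX$ is contractible, yet $Y$ is not. (This does not contradict the Lemma --- the weak cell of $C_{\id_\Bbbk}$ also contains $\one$ and so is not acyclic --- but it shows that your argument, which only invokes acyclicity of one weak generator $\bar\rX$, cannot succeed as written.) The paper's own proof is equally terse at this step and does not visibly use more than you do; a complete argument for the weak case would need some additional input beyond ``$\bar\rX$ is acyclic and weakly generates'', for instance the full hypothesis that \emph{every} $1$-morphism in $\L$ is acyclic, or the maximality of $\bfI$ via Lemma~\ref{maxrestrict}.
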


\proof
Since $\L$ is acyclic, there exists a $2$-morphism $f_\rX$ with $\del(f_\rX)=\id_\rX$ for every $\rX\in\L$. As $\bfI$ is a proper ideal, it cannot contain $\id_\rX$ for any $\rX\in \L$ and since it is a dg ideal, it can therefore also not contain $f_\rX$ for any $\rX\in\L$. Hence in the cell $2$-representation $\del(f_{\rX}+\bfI) = \id_\rX+\bfI$ and the cell $2$-representation is acyclic. 
\endproof

\subsection{Cell \texorpdfstring{$2$}{2}-representations and dg idempotent completion}\label{secidempcomp}

Let $\cC$ be a dg $2$-category and $\bfM\in \cC\tworep$. Consider the dg idempotent completion $\cC^\circ$.

Define $\bfM^\circ$ to be the pretriangulated $2$-representation of $\cC$ on the categories $\bfM(\ti)^\circ$. Then $\bfM^\circ\in \cC\tworep$ using Lemma \ref{Ccirctri}. Further, $\bfM^\circ$ has a natural extension to a pretriangulated $2$-representation of $\cC^\circ$ by Lemma~\ref{extendtocirc}.

\begin{lemma}
The pretriangulated $2$-representations $\big(\bfP^{\cC}_\ti\big)^\circ$ and $\bfP^{\cC^\circ}_\ti$ of $\cC^\circ$ are dg equivalent.
\end{lemma}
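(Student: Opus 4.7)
My plan is to define an explicit morphism $\Psi \colon \bfP^{\cC^\circ}_\ti \to \big(\bfP^{\cC}_\ti\big)^\circ$ of pretriangulated $2$-representations of $\cC^\circ$ whose components are dg equivalences. On objects, $\Psi_\tj \colon \overline{\cC^\circ(\ti,\tj)} \to \overline{\cC(\ti,\tj)}^\circ$ sends a twisted complex $(X_1 \oplus \cdots \oplus X_n, \alpha)$, with $X_i = (Z_i)_{f_i}$ for $Z_i \in \cC(\ti,\tj)$ and $f_i$ a dg idempotent on $Z_i$, to the pair $\big((Z_1 \oplus \cdots \oplus Z_n, \tilde\alpha), \diag(f_1, \ldots, f_n)\big) \in \overline{\cC(\ti,\tj)}^\circ$, where $\tilde\alpha$ is the lift of $\alpha$ to $\cC(\ti,\tj)$ via $\Hom_{\cC^\circ}(X_l, X_k) = f_k \Hom_\cC(Z_l, Z_k) f_l$. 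One checks that $\diag(f_i)$ is a dg idempotent on $(Z_1 \oplus \cdots \oplus Z_n, \tilde\alpha)$ in $\overline{\cC(\ti,\tj)}$, exploiting $\diag(f_i) \tilde\alpha = \tilde\alpha = \tilde\alpha \diag(f_i)$. On morphisms, $\Psi_\tj$ acts by the identification of matrix entries. The collection $\{\Psi_\tj\}_\tj$ assembles into a morphism of dg $2$-representations, with the required natural isomorphisms $\eta_\rG$ (for $\rG \in \cC^\circ(\tj,\tk)$) arising from the compatibility of horizontal composition of twisted complexes \eqref{orderingconvention} with the extension \eqref{functorcirc} of dg functors to idempotent completions; coherence reduces to strict associativity of horizontal composition in $\cC^\circ$.

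Fully faithfulness of each $\Psi_\tj$ is immediate from the definitions: both morphism spaces unfold to matrices with entries in $f_k \Hom_\cC(Z_l, Z_k) f_l$ subject to analogous conditions. The essential surjectivity is the technical heart. Given $((Y, \beta), e) \in \overline{\cC(\ti,\tj)}^\circ$ with $Y = Z_1 \oplus \cdots \oplus Z_n$, the twisted complex $(Y, \beta)$ itself lies in the image of $\Psi_\tj$ (viewed as a twisted complex in $\cC^\circ(\ti,\tj)$ with $f_i = \id_{Z_i}$), and $((Y, \beta), e)$ arises as a dg direct summand of $(Y, \beta)$ in $\overline{\cC(\ti,\tj)}^\circ$. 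The task therefore reduces to showing that $\overline{\cC^\circ(\ti,\tj)}$ is itself dg idempotent complete, so that this splitting already occurs within the domain of $\Psi_\tj$.

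This last step is the main obstacle: an arbitrary dg idempotent $e$ on a twisted complex $(X, \alpha) \in \overline{\cC^\circ(\ti,\tj)}$ need not be block diagonal, and its diagonal entries $e_{ii}$ need not individually be dg idempotents, because the condition $\partial e = 0$ in $\overline{\cC}$ mixes the diagonal and off-diagonal parts through the twisted differential $\alpha$. The strategy is to normalize $e$ by iteratively applying dg automorphisms of $(X, \alpha)$ that absorb the off-diagonal contributions of $e$ into a modified twisted differential, eventually placing $e$ in a block-diagonal form whose diagonal blocks are honest dg idempotents on refined component objects of $\cC^\circ(\ti,\tj)$. The splittings are then realized componentwise by further dg idempotent completion of the $Z_i$. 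Lemma \ref{Ccirctri} provides the underlying structural fact — namely, that $\overline{\cC(\ti,\tj)}^\circ$ is its own pretriangulated hull — which guarantees the normalization procedure stays within the required category and terminates, yielding the desired dg equivalence $\Psi_\tj$.
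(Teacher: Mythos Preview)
You have overlooked the standing hypothesis of Section~\ref{cellchapter}: $\cC$ is assumed to be a \emph{pretriangulated} $2$-category. Under this assumption the lemma is essentially immediate, and this is exactly how the paper argues. Since each $\cC(\ti,\tj)$ is pretriangulated, the embedding $\cC(\ti,\tj)\hookrightarrow\ov{\cC(\ti,\tj)}$ is a dg equivalence, so $\big(\bfP^{\cC}_\ti\big)^\circ(\tj)=\ov{\cC(\ti,\tj)}^\circ\simeq\cC(\ti,\tj)^\circ$. On the other side, Lemma~\ref{Ccirctri} says that $\cC(\ti,\tj)^\circ$ is again pretriangulated, whence $\bfP^{\cC^\circ}_\ti(\tj)=\ov{\cC(\ti,\tj)^\circ}\simeq\cC(\ti,\tj)^\circ$ as well. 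Both identifications are compatible with the $\cC^\circ$-action. That is the entire proof.

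Your argument, by contrast, attempts to establish $\ov{\D^\circ}\simeq\ov{\D}^\circ$ for an arbitrary dg category $\D=\cC(\ti,\tj)$, and the step you flag as the ``technical heart'' --- that $\ov{\D^\circ}$ is dg idempotent complete --- is not actually proved. The proposed normalization procedure (conjugating a dg idempotent $e$ on a twisted complex by dg automorphisms until it becomes block diagonal with dg idempotent diagonal blocks) is only sketched; you neither construct the automorphisms nor verify termination, and your appeal to Lemma~\ref{Ccirctri} at the end does not supply this. Lemma~\ref{Ccirctri} tells you that $\ov{\D}^\circ$ is pretriangulated, which is about closure under cones and shifts, not about splitting idempotents inside $\ov{\D^\circ}$. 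In fact, once you remember that $\D$ is pretriangulated, the reduction you are after becomes trivial: $\ov{\D^\circ}\simeq\D^\circ$, which is manifestly dg idempotent complete. So the gap in your argument is precisely the place where the standing hypothesis does all the work.
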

\begin{proof}
For any object $\tj$ of $\cC$, $\big(\bfP^{\cC}_\ti\big)^\circ(\tj)=\cC(\ti,\tj)^\circ\simeq \ov{\cC(\ti,\tj)^\circ}=\bfP^{\cC^\circ}_\ti(\tj)$ are dg equivalent pretriangulated categories by Lemma~\ref{Ccirctri}, with the same action of $\cC^\circ$.
\end{proof}

\begin{lemma}\label{RinRcirc} Assume that $\rX$ is a dg indecomposable $1$-morphism in $\cC$.
\begin{enumerate}
\item\label{RinRcirc1} Let  $\rY$ be a dg direct summand of $\rX$ in $\cC^\circ$. Then the $2$-representation $\bfR_\rY^{\cC^\circ,\leq}$ is a pretriangulated $2$-subrepresentation of $\big(\bfR_\rX^{\cC,\leq}\big)^\circ$ of $\cC^\circ$. 
Moreover,  $\bfR_\rX^{\cC^\circ,\leq} =\big(\bfR_\rX^{\cC,\leq}\big)^\circ$.
\item\label{RinRcirc2} The $2$-representation $\big(\bfR_\rX^{\cC,\preceq}\big)^\circ$ is a pretriangulated $2$-subrepresentation of  $\bfR_\rX^{\cC^\circ,\preceq}$. 
\end{enumerate}
\end{lemma}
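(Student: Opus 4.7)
The proof of both parts proceeds by comparing the relevant pretriangulated $2$-subrepresentations through the canonical dg embedding $\cC \hookrightarrow \cC^\circ$ and using the identification $(\bfP_\ti^\cC)^\circ \simeq \bfP_\ti^{\cC^\circ}$ from the preceding lemma, together with the extension of $\cC$-actions to $\cC^\circ$-actions under dg idempotent completion (Lemma \ref{extendtocirc}). Throughout, I will exploit the explicit description that any $1$-morphism in $\cC^\circ$ has the form $\rH_f$ for some $\rH \in \cC$ and dg idempotent $f$ on $\rH$, and analogously for dg summands of objects of $\bfP_\ti^\cC$.

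For part \eqref{RinRcirc1}, write $\rY = \rX_e$ for some dg idempotent $e$ on $\rX$. For any $\rH_f$ in $\cC^\circ$, the object $\rH_f\rY$ corresponds, under the equivalence $(\bfP_\ti^\cC)^\circ \simeq \bfP_\ti^{\cC^\circ}$, to the dg idempotent splitting of $\rH\rX$ by $f\circ_0 e$. Since $\rH\rX$ lies in $\bfR_\rX^{\cC,\leq}$, this splitting is an object of $(\bfR_\rX^{\cC,\leq})^\circ$. As the set of such $\rH_f\rY$ generates $\bfR_\rY^{\cC^\circ,\leq}$, and $(\bfR_\rX^{\cC,\leq})^\circ$ is a pretriangulated $2$-subrepresentation of $\bfP_\ti^{\cC^\circ}$ (closed under shifts, cones, dg direct summands existing in $\bfP_\ti^{\cC^\circ}$, and under the $\cC^\circ$-action provided by Lemma \ref{extendtocirc}), the containment $\bfR_\rY^{\cC^\circ,\leq} \subseteq (\bfR_\rX^{\cC,\leq})^\circ$ follows by minimality of the thick closure. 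For the equality when $\rY = \rX$, the reverse inclusion holds because $\bfR_\rX^{\cC^\circ,\leq}$ contains $\rX$, is closed under the $\cC$-action (a fortiori from the $\cC^\circ$-action) and is thick inside the dg idempotent complete $\bfP_\ti^{\cC^\circ}$, so it absorbs all dg idempotent splittings of objects in $\bfR_\rX^{\cC,\leq}$.

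For part \eqref{RinRcirc2}, it suffices to show that $(\bfR_\rX^{\cC,\preceq})^\circ$ is a pretriangulated $2$-subrepresentation of $\bfP_\ti^{\cC^\circ}$ contained in $\bfI_{\bfP_\ti^{\cC^\circ}}(\id_\rX)$, the conclusion then being immediate from the maximality characterising $\bfR_\rX^{\cC^\circ,\preceq}$. The subrepresentation property is automatic from Lemma~\ref{extendtocirc} together with the fact that dg idempotent completion preserves the pretriangulated structure. For the ideal containment, take $Z_e \in (\bfR_\rX^{\cC,\preceq})^\circ(\tj)$, so that $Z \in \bfR_\rX^{\cC,\preceq}(\tj)$ and $\id_Z \in \bfI_{\bfP_\ti^\cC}(\id_\rX)(\tj)$. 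A witnessing decomposition $\id_Z = \sum_l a_l\circ \rH_l(\id_\rX)\circ b_l$ with $\rH_l \in \cC$ remains valid when interpreted in $\cC^\circ$, so $\id_Z \in \bfI_{\bfP_\ti^{\cC^\circ}}(\id_\rX)(\tj)$, and then $\id_{Z_e} = e\circ \id_Z\circ e$ is in the same ideal by two-sided closure.

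The main subtlety, and the place where care is needed, is the consistent identification of pretriangulated and dg idempotent closures along $\bfP_\ti^\cC \hookrightarrow (\bfP_\ti^\cC)^\circ \simeq \bfP_\ti^{\cC^\circ}$, distinguishing between direct summands existing at each stage; this is controlled by Lemma~\ref{Ccirctri} and the preceding lemma. A secondary point worth verifying explicitly (which is why part \eqref{RinRcirc2} yields only an inclusion and not an equality as in \eqref{RinRcirc1}) is that passage to $\cC^\circ$ may create new pretriangulated $2$-subrepresentations sitting inside $\bfI_{\bfP_\ti^{\cC^\circ}}(\id_\rX)$ whose restrictions to $\cC$ are not contained in $\bfI_{\bfP_\ti^\cC}(\id_\rX)$, so $(\bfR_\rX^{\cC,\preceq})^\circ$ need not exhaust $\bfR_\rX^{\cC^\circ,\preceq}$.
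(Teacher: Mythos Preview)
Your proof is correct and follows essentially the same route as the paper's. Both arguments for part~\eqref{RinRcirc1} reduce to observing that each $\rH_f\rY$ is a dg idempotent summand of some $\rH\rX\in\bfR_\rX^{\cC,\leq}$ and then invoking thickness, and both reverse inclusions use that $\bfR_\rX^{\cC^\circ,\leq}$ is thick in the dg idempotent complete ambient category. For part~\eqref{RinRcirc2} the paper phrases the argument slightly differently---it first notes $\bfR_\rX^{\cC,\preceq}(\tk)\subseteq\bfR_\rX^{\cC^\circ,\preceq}(\tk)$ directly and then observes the right-hand side is dg idempotent complete---whereas you verify the ideal containment explicitly and appeal to the maximality in the definition of $\bfR^{\preceq}$; but these amount to the same computation, since membership in $\bfR^\preceq$ is characterised exactly by the identity lying in the relevant ideal.
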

\begin{proof}
Part \eqref{RinRcirc1}. Let $\rX\in \cC(\ti,\tj)$.
For any object $\tk$ of $\cC$, $\bfR_\rY^{\cC^\circ,\leq}(\tk)$ is the thick closure of the set $\{\rG \rY \,\vert\, \rG\in \cC(\tj,\tk)^\circ\}$ in $\cC(\tj,\tk)^\circ$.
Note that $\big(\bfR_\rX^{\cC,\leq}\big)^\circ(\tk)$ is the dg idempotent completion of the thick closure of the set $\{\rF \rX \,\vert\, \rF\in \cC(\tj,\tk)\}$ in $\cC(\tj,\tk)^\circ$. Hence, 
$$\{\rG \rY \,\vert\, \rG\in \cC(\tj,\tk)^\circ\}\subset  \big(\bfR_\rX^{\cC,\leq}\big)^\circ(\tk).$$
The larger category is closed under shifts, cones and dg direct summands, and hence also contains $\bfR_\rY^{\cC^\circ,\leq}(\tk)$.

Conversely, thick closure of the set $\{\rF \rX \,\vert\, \rF\in \cC(\tj,\tk)\}$ in $\cC(\tj,\tk)^\circ$ is clearly contained in the thick closure of the set $\{\rG \rX \,\vert\, \rG\in \cC(\tj,\tk)^\circ\}$ in $\cC(\tj,\tk)^\circ$.

Part \eqref{RinRcirc2}. As categories, $\bfR_{\rX}^{\cC,\preceq}(\tk)\subseteq \bfR_{\rX}^{\cC^\circ,\preceq}(\tk)$. The larger one is a subcategory of $\cC^\circ(\ti,\tk)$ and is clearly dg idempotent complete, hence contains the dg idempotent completion $\big(\bfR_\rX^{\cC,\preceq}\big)^\circ$ of the smaller one.
\end{proof}

To compare dg cell $2$-representations for $\cC$ and $\cC^\circ$, we need the following general observation.

\begin{lemma}\label{Icircquotient}
Let $\bfM$ be a pretriangulated $2$-representation of $\cC$ and $\bfI$ a dg ideal in $\bfM$ with quotient morphism $\Pi\colon \bfM\to \bfM/\bfI$. Then $\Pi^\circ$ induces a fully faithful morphism of pretriangulated $2$-representations from $\bfM^\circ/\bfI^\circ$ to $\big(\bfM/\bfI\big)^\circ$.  Moreover, this morphism induces an equivalence between $(\bfM^\circ/\bfI^\circ)^\circ$ and $\big(\bfM/\bfI\big)^\circ$.
\end{lemma}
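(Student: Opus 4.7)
My plan is to build $\tilde\Pi\colon \bfM^\circ/\bfI^\circ\to(\bfM/\bfI)^\circ$ from the universal property of the quotient, verify full faithfulness via a direct computation of hom spaces, and then deduce the equivalence statement by applying $(-)^\circ$ once more and invoking Lemma \ref{Ccirctri}.

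First, I would construct the morphism. Applying $(-)^\circ$ to $\Pi\colon\bfM\to\bfM/\bfI$ (via Lemma \ref{extendtocirc} and \eqref{functorcirc}) yields $\Pi^\circ\colon\bfM^\circ\to(\bfM/\bfI)^\circ$, acting as $X_e\mapsto X_{\Pi(e)}$. By Lemma \ref{circideal}, every morphism in $\bfI^\circ(\ti)$ has the form $e'\circ g\circ e$ with $g\in\bfI(\ti)$, and such a morphism is sent by $\Pi^\circ$ to $\Pi(e')\circ\Pi(g)\circ\Pi(e)=0$. Hence $\Pi^\circ$ factors uniquely through the quotient, producing a morphism of dg $2$-representations $\tilde\Pi\colon\bfM^\circ/\bfI^\circ\to(\bfM/\bfI)^\circ$. (Compatibility with the $\cC$-action is inherited from $\Pi^\circ$.)

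Next, I would check full faithfulness on each $\ti$. Using Lemma \ref{circideal},
$$\Hom_{\bfM^\circ/\bfI^\circ(\ti)}(X_e,Y_{e'})=\frac{e'\Hom_{\bfM(\ti)}(X,Y)e}{e'\Hom_{\bfI(\ti)}(X,Y)e},$$
while by definition of dg idempotent completion applied to $\bfM(\ti)/\bfI(\ti)$,
$$\Hom_{(\bfM/\bfI)^\circ(\ti)}\bigl(X_{\Pi(e)},Y_{\Pi(e')}\bigr)=\Pi(e')\bigl(\Hom_{\bfM(\ti)}(X,Y)/\Hom_{\bfI(\ti)}(X,Y)\bigr)\Pi(e).$$
The induced map $[e'fe]\mapsto \Pi(e')\Pi(f)\Pi(e)$ is plainly surjective, and $[e'fe]$ lies in the kernel precisely when $e'fe\in\Hom_{\bfI(\ti)}(X,Y)$, i.e.\ when $[e'fe]=0$ already. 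Thus $\tilde\Pi$ is fully faithful on morphism spaces.

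Finally, for the equivalence, I would pass to another dg idempotent completion. Applying $(-)^\circ$ to $\tilde\Pi$ gives a dg functor $\tilde\Pi^\circ\colon(\bfM^\circ/\bfI^\circ)^\circ\to((\bfM/\bfI)^\circ)^\circ$, and the target is dg equivalent to $(\bfM/\bfI)^\circ$ by Lemma \ref{Ccirctri}. Full faithfulness of $\tilde\Pi^\circ$ is immediate from full faithfulness of $\tilde\Pi$, since the hom spaces in the completions are subspaces of the original hom spaces. For essential surjectivity it suffices, using Lemma \ref{Ccirctri} again, to hit every object $Y_{\bar f}$ of $(\bfM/\bfI)^\circ$ with $\bar f\colon Y\to Y$ a dg idempotent in $\bfM(\ti)/\bfI(\ti)$. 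Viewing $Y$ as an object of $\bfM^\circ/\bfI^\circ$ (via the identity idempotent), full faithfulness of $\tilde\Pi$ supplies a unique lift $f\in\End_{\bfM^\circ/\bfI^\circ(\ti)}(Y)$ of $\bar f$; since $\tilde\Pi$ is a dg functor and faithful, $f$ is automatically a dg idempotent ($f^2-f$ and $\partial f$ map to zero). Then $Y_f\in(\bfM^\circ/\bfI^\circ)^\circ$ satisfies $\tilde\Pi^\circ(Y_f)=Y_{\bar f}$, as required.

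The main obstacle is bookkeeping: keeping the two layers of idempotent completion and the quotient straight, and making sure that the lift $f$ of $\bar f$ produced by full faithfulness is genuinely a dg idempotent rather than merely a degree $0$ lift. Both points are resolved by exploiting full faithfulness (kernel-freeness) of $\tilde\Pi$ together with the explicit description of $\bfI^\circ$ from Lemma \ref{circideal}.
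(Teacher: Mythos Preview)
Your proposal is correct and follows essentially the same approach as the paper. The paper's proof is terser: it obtains full faithfulness by invoking Lemma~\ref{quotientfaithful} together with $\bfI=\bfM\cap\bfI^\circ$ from Lemma~\ref{circideal}, whereas you unpack this into the explicit hom-space computation; for essential surjectivity both arguments lift dg idempotents from $\bfM/\bfI(\ti)$ to $\bfM^\circ/\bfI^\circ(\ti)$, with your version spelling out why the lift remains a dg idempotent.
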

\begin{proof}
The first statement follows from Lemma \ref{quotientfaithful} using that $\bfI=\bfM\cap \bfI^\circ$ by Lemma \ref{circideal}.
To see that the induced morphism between dg idempotent completions is essentially surjective, it suffices to observe that every dg idempotent in some $\bfM/\bfI(\ti)$ is also a dg idempotent in $\bfM^\circ/\bfI^\circ (\ti)$, and hence the associated object is in the essential image of the morphism when extended to  $(\bfM^\circ/\bfI^\circ)^\circ$.
\end{proof}

\begin{lemma}\label{bfIcirc}
Given $\bfI\in \MaxSpec(\bfR^{\cC,\leq}_{\rX})$ (respectively, $\bfI\in  \MaxSpec(\bfR^{\cC,\preceq}_{\rX})$), the ideal $\bfI^\circ$ generated by $\bfI$ in $(\bfR_{\rX}^{\leq})^{\circ}$ (respectively, $(\bfR_{\rX}^{\preceq})^{\circ}$) is also maximal.
\end{lemma}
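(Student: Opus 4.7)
The plan is to combine Lemma \ref{circideal}, which pins down $\bfI^\circ$ explicitly on hom-spaces, with Lemma \ref{LemIcapM}, which says every dg ideal of the dg idempotent completion is determined by its restriction to the original $2$-representation. Maximality of $\bfI^\circ$ should then drop out of maximality of $\bfI$ together with a short properness check.

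First I would verify that $\bfI^\circ$ is itself a proper dg ideal of $(\bfR_\rX^\leq)^\circ$. By Lemma \ref{circideal}, for a dg idempotent $e\colon Y\to Y$ in $\bfR_\rX^\leq$, the identity $\id_{Y_e} = e$ lies in $\bfI^\circ$ precisely when $e$ already lies in $\bfI(Y,Y)$. Since $\bfI$ is proper, in particular $\id_\rX \notin \bfI$, so $\id_{\rX_{\id}} \notin \bfI^\circ$, and $\bfI^\circ$ is proper.

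Next, I would take an arbitrary proper dg ideal $\bfJ$ in $(\bfR_\rX^\leq)^\circ$ strictly containing $\bfI^\circ$ and derive a contradiction. Setting $\bfJ' := \bfJ \cap \bfR_\rX^\leq$, Lemma \ref{circideal} gives $\bfI = \bfI^\circ \cap \bfR_\rX^\leq \subseteq \bfJ'$, so by maximality of $\bfI$ there are only two options. If $\bfJ' = \bfI$, then Lemma \ref{LemIcapM} forces $\bfJ$ to be the ideal generated by $\bfI$ in $(\bfR_\rX^\leq)^\circ$, which is $\bfI^\circ$ by definition, contradicting $\bfI^\circ \subsetneq \bfJ$. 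If instead $\bfJ' = \bfR_\rX^\leq$, then $\id_Y \in \bfJ$ for every object $Y$ of $\bfR_\rX^\leq$, and the $2$-ideal property promotes this to $\id_{Y_e} = e \circ \id_Y \circ e \in \bfJ$ for every dg idempotent $e$, so $\bfJ$ exhausts all identities of the idempotent completion, contradicting properness.

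The weak variant goes through verbatim with $\bfR_\rX^\preceq$ in place of $\bfR_\rX^\leq$. I do not expect any genuine obstacle, since the two structural lemmas have already done the heavy lifting; the only bookkeeping point is to ensure that the extension $\bfI \mapsto \bfI^\circ$ neither loses properness nor shrinks enough of $\bfI$ for maximality to break, both of which are immediate from Lemma \ref{circideal}.
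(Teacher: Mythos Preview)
Your proof is correct and follows essentially the same route as the paper's: show properness of $\bfI^\circ$ via $\id_\rX\notin\bfI$ and Lemma~\ref{circideal}, then deduce maximality from the fact that any dg ideal of the completion is determined by its restriction. The paper's proof is terser, citing only Lemma~\ref{circideal}, whereas you explicitly invoke Lemma~\ref{LemIcapM} for the step that an ideal $\bfJ$ of $(\bfR_\rX^\leq)^\circ$ is generated by $\bfJ\cap\bfR_\rX^\leq$; this is exactly the ingredient implicitly needed, so your version is the more carefully spelled-out form of the same argument.
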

\begin{proof}
Let $\bfI\in \MaxSpec(\bfR^{\cC,\leq}_{\rX})$ so that $\id_\rX\notin \bfI(\ti,\tj)$. Then $\id_\rX\notin \bfI^\circ(\ti,\tj)$ so $\bfI$ is a proper ideal. Now $\bfI^\circ$ is maximal using Lemma \ref{circideal}. The same argument applies using $\bfR^{\preceq}$ instead of $\bfR^{\leq}$.
\end{proof}

Note that a $1$-morphism $\rX\in \cC(\ti,\tj)$ remains dg indecomposable in $\cC^\circ(\ti,\tj)$ if and only if $\id_\rX$ and $0$ are its only dg idempotent endomorphisms. In this case, Lemma \ref{RinRcirc} shows that we have $\bfR_{\rX}^{\cC^\circ,\leq}= (\bfR_{\rX}^{\cC,\leq})^{\circ}$ and that any $\bfI\in \MaxSpec(\bfR^{\cC,\leq}_{\rX})$ gives rise to $\bfI^\circ\in \MaxSpec(\bfR^{\cC^\circ,\leq}_{\rX}).$ Denote the corresponding cell $2$-representations by $\bfC_\bfI^{\cC, \leq}$ and  $\bfC_{\bfI^\circ}^{\cC^\circ, \leq}$, respectively, to clearly identify which $2$-category the $2$-representation is defined on.

\begin{proposition}
Assume $\rX\in \cC(\ti,\tj)$ remains dg indecomposable in $\cC^\circ(\ti, \tj)$ and let $\bfI\in \MaxSpec(\bfR^{\cC,\leq}_{\rX})$. Then we have a fully faithful morphism of pretriangulated $2$-representation
 $$\bfC_{\bfI^\circ}^{\cC^\circ, \leq} \hookrightarrow \big(\bfC_\bfI^{\cC, \leq}\big)^\circ,$$
which induces an equivalence between $\big(\bfC_{\bfI^\circ}^{\cC^\circ, \leq}\big)^\circ$ and $\big(\bfC_\bfI^{\cC, \leq}\big)^\circ$.
\end{proposition}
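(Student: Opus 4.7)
The plan is a direct assembly of the three preceding lemmas with minimal extra work. First, since $\rX$ remains dg indecomposable in $\cC^\circ(\ti,\tj)$, Lemma \ref{RinRcirc}\eqref{RinRcirc1} identifies $\bfR^{\cC^\circ,\leq}_{\rX}$ with $\bigl(\bfR^{\cC,\leq}_{\rX}\bigr)^\circ$ as pretriangulated $2$-representations of $\cC^\circ$. Combined with Lemma \ref{bfIcirc}, which ensures that $\bfI^\circ$ is a maximal proper dg ideal in $\bigl(\bfR^{\cC,\leq}_{\rX}\bigr)^\circ = \bfR^{\cC^\circ,\leq}_{\rX}$, this yields the definitional identification
\[
\bfC^{\cC^\circ,\leq}_{\bfI^\circ} \;=\; \bigl(\bfR^{\cC,\leq}_{\rX}\bigr)^{\circ}\big/\bfI^\circ.
\]

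I would then apply Lemma \ref{Icircquotient} to $\bfM = \bfR^{\cC,\leq}_{\rX}$ and the dg ideal $\bfI$. Its first clause produces a fully faithful morphism of dg $2$-representations
\[
\bigl(\bfR^{\cC,\leq}_{\rX}\bigr)^\circ\big/\bfI^\circ \;\hookrightarrow\; \bigl(\bfR^{\cC,\leq}_{\rX}/\bfI\bigr)^\circ \;=\; \bigl(\bfC^{\cC,\leq}_{\bfI}\bigr)^\circ,
\]
induced by $\Pi^\circ$, where $\Pi\colon \bfR^{\cC,\leq}_{\rX}\to \bfC^{\cC,\leq}_{\bfI}$ is the quotient morphism. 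Composed with the identification of the previous paragraph, this is exactly the map $\bfC^{\cC^\circ,\leq}_{\bfI^\circ}\hookrightarrow \bigl(\bfC^{\cC,\leq}_{\bfI}\bigr)^\circ$ sought in the proposition. The only point that is not purely formal is verifying that this morphism is equivariant for the $\cC^\circ$-action (not merely the $\cC$-action). This should follow from the definitions in \eqref{functorcirc} and Lemma \ref{extendtocirc}: the extension of both representations and morphisms from $\cC$ to $\cC^\circ$ is given by conjugating by dg idempotents, an operation that is manifestly natural and thus commutes with $\Pi^\circ$.

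For the second part, the second clause of Lemma \ref{Icircquotient} gives that the induced morphism on dg idempotent completions
\[
\bigl(\bfC^{\cC^\circ,\leq}_{\bfI^\circ}\bigr)^\circ \;\longrightarrow\; \bigl(\bfC^{\cC,\leq}_{\bfI}\bigr)^\circ
\]
is additionally essentially surjective, and hence a dg equivalence of pretriangulated $2$-representations (here one uses that $\bigl(\bfC^{\cC,\leq}_{\bfI}\bigr)^\circ$ is already dg idempotent complete by Lemma \ref{Ccirctri}, so no further completion is needed on the target). The principal obstacle I foresee is precisely the $\cC^\circ$-equivariance bookkeeping alluded to above; once that is in place, the rest is a mechanical translation of Lemmas \ref{RinRcirc}, \ref{bfIcirc}, and \ref{Icircquotient} into the language of dg cell $2$-representations.
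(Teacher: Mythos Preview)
Your proposal is correct and follows the same route as the paper: the paper's proof is a one-line appeal to Lemma~\ref{Icircquotient}, with the identifications $\bfR^{\cC^\circ,\leq}_{\rX}=(\bfR^{\cC,\leq}_{\rX})^\circ$ (Lemma~\ref{RinRcirc}\eqref{RinRcirc1}) and $\bfI^\circ\in\MaxSpec(\bfR^{\cC^\circ,\leq}_{\rX})$ (Lemma~\ref{bfIcirc}) already recorded in the paragraph immediately preceding the proposition. Your version simply makes these dependencies explicit and flags the $\cC^\circ$-equivariance bookkeeping, which the paper leaves implicit.
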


\proof
This follows immediately from Lemma \ref{Icircquotient}.
\endproof

We next consider weak cell $2$-representations.

\begin{lemma}\label{weakbfImax}
Assume $\rX\in \cC(\ti,\tj)$ remains dg indecomposable in $\cC^\circ(\ti, \tj)$ and let $\bfI\in \MaxSpec(\bfR^{\cC,\preceq}_{\rX})$. Let $\bfJ$ be the ideal generated by $\bfI$ in $\bfR^{\cC^\circ,\preceq}_{\rX}$. Then $\bfJ\in \MaxSpec(\bfR^{\cC^\circ,\preceq}_{\rX})$.
\end{lemma}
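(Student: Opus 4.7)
I would reduce the statement to Lemma~\ref{bfIcirc} applied to the pretriangulated $2$-sub\-rep\-re\-sentation $(\bfR_\rX^{\cC,\preceq})^\circ$ of $\bfR_\rX^{\cC^\circ,\preceq}$ provided by Lemma~\ref{RinRcirc}\eqref{RinRcirc2}. Let $\bfI^\circ$ denote the dg ideal generated by $\bfI$ inside $(\bfR_\rX^{\cC,\preceq})^\circ$; by Lemma~\ref{bfIcirc}, $\bfI^\circ\in \MaxSpec((\bfR_\rX^{\cC,\preceq})^\circ)$. Since $\bfI^\circ$ is itself generated by $\bfI$, the ideal $\bfJ$ coincides with the ideal generated by $\bfI^\circ$ in $\bfR_\rX^{\cC^\circ,\preceq}$; hence Lemma~\ref{idealofsub} yields the technical backbone of the argument,
\begin{equation*}
\bfJ\cap (\bfR_\rX^{\cC,\preceq})^\circ=\bfI^\circ.
\end{equation*}

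For properness, note that $\bfR_\rX^{\cC,\preceq}$ is weakly $\cC$-generated by $\rX$ by definition, and this property passes to $(\bfR_\rX^{\cC,\preceq})^\circ$ via Lemma~\ref{LemIcapM} and to $\bfR_\rX^{\cC^\circ,\preceq}$ directly. Since $\bfI^\circ$ is proper, $\id_\rX\notin \bfI^\circ$; combined with the displayed equality and $\id_\rX\in (\bfR_\rX^{\cC,\preceq})^\circ$, this gives $\id_\rX\notin \bfJ$, so $\bfJ$ is proper.

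For maximality, let $\bfJ'$ be a dg ideal of $\bfR_\rX^{\cC^\circ,\preceq}$ strictly containing $\bfJ$. The restriction $\bfJ'\cap (\bfR_\rX^{\cC,\preceq})^\circ$ is a dg ideal containing $\bfI^\circ$, so by maximality of $\bfI^\circ$ it equals either $\bfI^\circ$ or the whole of $(\bfR_\rX^{\cC,\preceq})^\circ$. In the second case, $\id_\rX\in \bfJ'$, forcing $\bfJ'=\bfR_\rX^{\cC^\circ,\preceq}$ by weak generation. The main obstacle is excluding the first case. Suppose $\phi\colon \rY\to \rY'$ lies in $\bfJ'\setminus \bfJ$. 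Imitating the proof of Lemma~\ref{Rdifference}, I would combine summands into a single $1$-morphism (possible since $\cC^\circ$ is pretriangulated and hence closed under direct sums) to factor $\id_\rY=p\circ \iota$ and $\id_{\rY'}=p'\circ \iota'$ through intermediate objects $\widetilde{\rH}\rX$ and $\widetilde{\rH}'\rX$ with $\widetilde{\rH},\widetilde{\rH}'\in \cC^\circ$. Each of these intermediate objects is a dg idempotent summand of an object of the form $\rF\rX$ with $\rF\in \cC$, hence lies in $(\bfR_\rX^{\cC,\preceq})^\circ$. Then $\iota'\circ\phi\circ p$ is a morphism in $(\bfR_\rX^{\cC,\preceq})^\circ$ that also belongs to $\bfJ'$, hence to $\bfI^\circ\subseteq \bfJ$, and so $\phi=p'\circ (\iota'\circ\phi\circ p)\circ \iota\in \bfJ$, a contradiction.
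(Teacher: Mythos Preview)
Your proof is correct and follows essentially the same approach as the paper: both factor a morphism outside $\bfJ$ through objects of the form $\rF_e\rX$ to reach a contradiction via maximality of an ideal in a smaller $2$-subrepresentation. The only packaging difference is that you route through $(\bfR_\rX^{\cC,\preceq})^\circ$ and Lemma~\ref{bfIcirc} (reducing to maximality of $\bfI^\circ$ there and using a case-split structure in the style of Lemma~\ref{goingup}), whereas the paper factors one step further down into $\bfR_\rX^{\cC,\preceq}$ itself and invokes maximality of $\bfI$ directly, avoiding the case distinction.
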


\proof
Since $\rX$ weakly $\cC$-generates $\bfR^{\cC^\circ,\preceq}_{\rX}$, Lemma \ref{2repidealonoverline} implies that $\bfJ$ is completely determined by the spaces $\Hom_{\bfJ(\tk)}(\rF_e\rX, \rG_{e'}\rX)$ where $\rF,\rG$ run over dg indecomposable $1$-morphisms in $\cC$, and $e,e'$ run over their dg idempotent endomorphisms. These spaces, however, are uniquely determined by the space $\Hom_{\bfJ(\tk)}(\rF\rX, \rG\rX)$, which equals $\Hom_{\bfI(\tk)}(\rF\rX, \rG\rX)$ by Lemma \ref{idealofsub}. In particular, the ideal in $\bfR^{\cC^\circ,\preceq}_{\rX}$ generated by $\big(\bfR^{\cC,\preceq}_{\rX}\big)^\circ$ and hence the ideal generated by all identities on objects in any $\bfR^{\cC,\preceq}_{\rX}(\tk)$ is all of $\bfR^{\cC^\circ,\preceq}_{\rX}$.

Let $f\in \Hom_{\bfR^{\cC^\circ,\preceq}_{\rX}(\tk)}(M, N)$, for some $M,N\in \bfR^{\cC^\circ,\preceq}_{\rX}(\tk)$, and assume $f\notin \bfJ(\tk)$. Consider the ideal $\bfJ'$  of $\bfR^{\cC^\circ,\preceq}_{\rX}$ generated by $\bfJ$ and $f$, or equivalently by $\bfI$ and $f$. We need to show $\bfJ' = \bfR^{\cC^\circ,\preceq}_{\rX}$. 
 Writing $\id_M = p_M \id_{\rF_e\rX} \iota_M, \id_N = p_N\id_{\rG_{e'}\rX}\iota_N$, and $e=\iota   p, e'=\iota' p'$,  we have
$f'=(\iota'\circ_0\id_\rX) \iota_N f p_M(p\circ_0 \id_\rX) \in \Hom_{\bfJ'(\tk)}(\rF\rX, \rG\rX)$. If $f'\in  \Hom_{\bfI(\tk)}(\rF\rX, \rG\rX)$, then
$$f= p_N(p'\circ_0 \id_\rX)  f' (\iota\circ_0\id_\rX) \iota_M\in \Hom_{\bfJ(\tk)}(M, N),$$ a contradiction. Thus, $f'\notin \bfI(\tk)$, so the ideal $\bfI'$ generated by $\bfI$ and $f'$ in $\bfR^{\cC,\preceq}_{\rX}$ equals $\bfR^{\cC,\preceq}_{\rX}$ by maximality of $\bfI$. This implies that the ideal $\bfJ''$ generated by $\bfI$ and $f'$ in $\bfR^{\cC^\circ,\preceq}_{\rX}$ equals $\bfR^{\cC^\circ,\preceq}_{\rX}$ by the first paragraph.
Since $f'\in \bfJ'(\tk)$ and $\bfI\subseteq \bfJ'$, the ideal  $\bfJ''$ is contained in $\bfJ'$, so $\bfJ' =\bfR^{\cC^\circ,\preceq}_{\rX}$, as required.
\endproof

\begin{lemma}
With the notation of Lemma \ref{weakbfImax}, $\big(\bfC^{\cC,\preceq}_\bfI\big)^\circ$ is a pretriangulated $2$-subrepresentations of $\big(\bfC_\bfJ^{\cC^\circ,\preceq}\big)^\circ$.
\end{lemma}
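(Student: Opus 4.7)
The plan is to use the inclusion of pretriangulated $2$-subrepresentations $\big(\bfR^{\cC,\preceq}_\rX\big)^\circ \hookrightarrow \bfR^{\cC^\circ,\preceq}_\rX$ from Lemma~\ref{RinRcirc}\eqref{RinRcirc2}, pass to the quotients by the relevant ideals, and then take dg idempotent completions to land in the stated dg $2$-representations.

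First, I would identify $\bfJ \cap \big(\bfR^{\cC,\preceq}_\rX\big)^\circ$ with $\bfI^\circ$, the dg ideal generated by $\bfI$ in $\big(\bfR^{\cC,\preceq}_\rX\big)^\circ$. The containment $\bfI^\circ \subseteq \bfJ$ holds because $\bfJ$ contains $\bfI$ and, as a dg ideal in $\bfR^{\cC^\circ,\preceq}_\rX$, is closed under pre- and post-composition with arbitrary morphisms there, in particular with the dg idempotents that carve out $\bfI^\circ$. Conversely, the ideal generated by $\bfI^\circ$ in $\bfR^{\cC^\circ,\preceq}_\rX$ already contains $\bfI$, hence agrees with $\bfJ$, and Lemma~\ref{idealofsub} applied to the pretriangulated $2$-subrepresentation $\big(\bfR^{\cC,\preceq}_\rX\big)^\circ \subseteq \bfR^{\cC^\circ,\preceq}_\rX$ then gives $\bfJ \cap \big(\bfR^{\cC,\preceq}_\rX\big)^\circ = \bfI^\circ$.

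Lemma~\ref{quotientfaithful} then produces a fully faithful morphism of dg $2$-representations $\big(\bfR^{\cC,\preceq}_\rX\big)^\circ / \bfI^\circ \hookrightarrow \bfC^{\cC^\circ,\preceq}_\bfJ$. Applying the dg idempotent completion $2$-functor preserves full faithfulness, since morphism spaces in the completion are cut down from the original by pre- and post-composition with idempotents. Combined with Lemma~\ref{Icircquotient}, which supplies an equivalence $\big(\big(\bfR^{\cC,\preceq}_\rX\big)^\circ/\bfI^\circ\big)^\circ \simeq \big(\bfC^{\cC,\preceq}_\bfI\big)^\circ$, this yields a fully faithful morphism $\big(\bfC^{\cC,\preceq}_\bfI\big)^\circ \hookrightarrow \big(\bfC^{\cC^\circ,\preceq}_\bfJ\big)^\circ$. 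Since the source is itself dg idempotent complete, any dg direct summand of an image object corresponds, via full faithfulness, to a dg idempotent in the source, which splits there; thus, the essential image is closed under dg direct summands and forms a pretriangulated $2$-subrepresentation of $\big(\bfC^{\cC^\circ,\preceq}_\bfJ\big)^\circ$.

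The main obstacle will be the first step, verifying $\bfJ \cap \big(\bfR^{\cC,\preceq}_\rX\big)^\circ = \bfI^\circ$. This requires reconciling two distinct enlargement operations --- completing $\bfI$ under conjugation by dg idempotents, versus generating an ideal in a larger ambient $2$-representation --- and checking their compatibility through Lemma~\ref{idealofsub}. Once this is established, the remaining assembly from Lemmas~\ref{quotientfaithful} and~\ref{Icircquotient} is essentially formal.
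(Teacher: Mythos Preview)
Your argument is correct, but it takes a longer route than the paper's. The paper works one level down: instead of using the inclusion $\big(\bfR^{\cC,\preceq}_\rX\big)^\circ \hookrightarrow \bfR^{\cC^\circ,\preceq}_\rX$ from Lemma~\ref{RinRcirc}\eqref{RinRcirc2}, it uses the uncompleted inclusion $\bfR^{\cC,\preceq}_\rX \hookrightarrow \bfR^{\cC^\circ,\preceq}_\rX$. The kernel of the composite $\bfR^{\cC,\preceq}_\rX \hookrightarrow \bfR^{\cC^\circ,\preceq}_\rX \twoheadrightarrow \bfC_\bfJ^{\cC^\circ,\preceq}$ is $\bfJ \cap \bfR^{\cC,\preceq}_\rX = \bfI$ directly by Lemma~\ref{idealofsub}, so one obtains a fully faithful morphism $\bfC^{\cC,\preceq}_\bfI \hookrightarrow \bfC_\bfJ^{\cC^\circ,\preceq}$, and then applies $(-)^\circ$ once at the end.

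Your approach first completes, then includes, and therefore has to identify $\bfJ \cap \big(\bfR^{\cC,\preceq}_\rX\big)^\circ$ with $\bfI^\circ$ (which you do correctly via Lemma~\ref{idealofsub}, noting that $\bfJ$ is also the ideal generated by $\bfI^\circ$), and afterwards invoke Lemma~\ref{Icircquotient} to translate $\big(\big(\bfR^{\cC,\preceq}_\rX\big)^\circ/\bfI^\circ\big)^\circ$ back to $\big(\bfC^{\cC,\preceq}_\bfI\big)^\circ$. Your ``main obstacle'' is thus self-imposed: by completing before including, you created the need to reconcile the two enlargement operations, whereas the paper avoids this by postponing completion. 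Both arguments are valid; the paper's is shorter because the single application of Lemma~\ref{idealofsub} at the uncompleted level already gives exactly the kernel needed.
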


\begin{proof}
The kernel of the composition $\bfR^{\cC,\preceq}_{\rX} \hookrightarrow \bfR^{\cC^\circ,\preceq}_{\rX} \to \bfC_\bfJ^{\cC^\circ,\preceq}$ equals $\bfI$ and hence induces a fully faithful morphism of pretriangulated $2$-representations
$$\bfC^{\cC,\preceq}_\bfI=\bfR^{\cC,\preceq}_{\rX}/\bfI \hookrightarrow\bfC_\bfJ^{\cC^\circ,\preceq}$$
of $\cC$. Taking dg idempotent completions, we obtain a fully faithful morphism of pretriangulated $2$-representations of $\cC^\circ$ as claimed.
\end{proof}

\begin{corollary} \label{bijforindec}
Assume $\rX\in \cC(\ti,\tj)$ remains dg indecomposable in $\cC^\circ(\ti, \tj)$. Then we have bijections
\begin{gather*} 
\MaxSpec(\bfR^{\cC,\leq}_{\rX})\stackrel{\sim}{\longleftrightarrow}\MaxSpec(\bfR^{\cC^\circ,\leq}_{\rX}),\\
\MaxSpec(\bfR^{\cC,\preceq}_{\rX})\stackrel{\sim}{\longleftrightarrow}\MaxSpec(\bfR^{\cC^\circ,\preceq}_{\rX}).
\end{gather*}
\end{corollary}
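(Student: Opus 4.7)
The plan is to construct explicit mutually inverse bijections in each case, using restriction as the common inverse candidate.

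For the strong bijection, Lemma \ref{RinRcirc}\eqref{RinRcirc1} identifies $\bfR^{\cC^\circ,\leq}_{\rX}=(\bfR^{\cC,\leq}_{\rX})^\circ$ under the hypothesis on $\rX$. I would verify that $\Phi\colon \bfI\mapsto \bfI^\circ$ and $\Psi\colon \bfJ\mapsto \bfJ\cap \bfR^{\cC,\leq}_{\rX}$ are mutual inverses: Lemma \ref{bfIcirc} makes $\Phi$ well-defined, the identity $\bfI^\circ\cap \bfR^{\cC,\leq}_{\rX}=\bfI$ from Lemma \ref{circideal} gives $\Psi\circ\Phi=\id$, and $\Phi\circ\Psi=\id$ follows from Lemma \ref{LemIcapM}, which shows that any dg ideal in $(\bfR^{\cC,\leq}_{\rX})^\circ$ is generated by its restriction. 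To see that $\Psi(\bfJ)$ is maximal when $\bfJ$ is, a strictly larger ideal in $\bfR^{\cC,\leq}_{\rX}$ would generate a strictly larger ideal in the completion (again by Lemma \ref{LemIcapM}), hence the entire completion; this places $\id_\rX$ into the restricted ideal, and since $\rX$ $\cC$-generates $\bfR^{\cC,\leq}_{\rX}$, the restricted ideal must be improper.

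For the weak bijection, the forward map $\Phi\colon \bfI\mapsto \bfJ$ (ideal generated by $\bfI$ in $\bfR^{\cC^\circ,\preceq}_{\rX}$) is well-defined by Lemma \ref{weakbfImax}, and the candidate inverse is restriction $\Psi\colon \bfJ\mapsto \bfJ\cap \bfR^{\cC,\preceq}_{\rX}$, viewing $\bfR^{\cC,\preceq}_{\rX}$ as a pretriangulated $2$-subrepresentation of $\bfR^{\cC^\circ,\preceq}_{\rX}$ (after restricting the $\cC^\circ$-action to $\cC$) via Lemma \ref{RinRcirc}\eqref{RinRcirc2}. Well-definedness of $\Psi$ into $\Spec$ follows because $\id_\rX$ in the restricted ideal would again leak up to $\bfJ$ through the generation procedure, making $\bfJ$ improper. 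The identity $\Psi\circ\Phi=\id$ is immediate from Lemma \ref{idealofsub}. For $\Phi\circ\Psi=\id$, the crucial input is extracted from the proof of Lemma \ref{weakbfImax}: since $\rX$ weakly $\cC$-generates $\bfR^{\cC^\circ,\preceq}_{\rX}$, Lemma \ref{2repidealonoverline} shows that any dg ideal $\bfJ$ in the completion is determined by the spaces $\Hom_{\bfJ(\tk)}(\rF_e\rX,\rG_{e'}\rX)$, and each of these is in turn determined via the splitting maps of $e,e'$ by $\Hom_{\bfJ(\tk)}(\rF\rX,\rG\rX)$; the latter coincides with the corresponding space of $\bfJ\cap \bfR^{\cC,\preceq}_{\rX}$, so restriction is injective on ideals and $\Phi\circ\Psi=\id$.

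The main obstacle is this injectivity-of-restriction property in the weak case: unlike the strong case where the two $2$-representations differ only by a dg idempotent completion, the inclusion $\bfR^{\cC,\preceq}_{\rX}\subseteq \bfR^{\cC^\circ,\preceq}_{\rX}$ is generally strict and not induced by dg idempotent completion. The fact that $\rX$ still weakly $\cC$-generates the larger $2$-representation is what allows the argument to go through, and the cleanest presentation would extract the injectivity step as a standalone observation rather than reprove the Hom-space determination argument in line.
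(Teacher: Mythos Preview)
Your proof is correct and follows essentially the same approach as the paper, which also constructs the bijections via $\bfI\mapsto\bfI^\circ$ (respectively, the generated ideal) with inverse given by restriction, citing Lemmas \ref{RinRcirc}\eqref{RinRcirc1}, \ref{bfIcirc}, \ref{weakbfImax}, \ref{idealofsub}, and \ref{LemIcapM}. Your treatment of the weak case is in fact more precise than the paper's terse proof: you correctly observe that Lemma \ref{LemIcapM} does not literally apply (since $\bfR^{\cC^\circ,\preceq}_{\rX}$ need not equal $(\bfR^{\cC,\preceq}_{\rX})^\circ$) and instead extract the required injectivity-of-restriction from the determination argument in the first paragraph of the proof of Lemma \ref{weakbfImax}.
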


\proof
The statements follows immediately from combining Lemma \ref{RinRcirc}\eqref{RinRcirc1} with Lemma \ref{bfIcirc} for $\leq$, and from Lemma \ref{weakbfImax} for $\preceq$, in both cases using Lemmas \ref{idealofsub} and \ref{LemIcapM}.
\endproof

\begin{lemma}\label{specmapscirc}
Let $\rX\in \cC(\ti,\tj)$ be dg indecomposable, $e\colon \rX\to \rX$ a dg idempotent and set $\rY=\rX_e \in \cC^\circ(\ti,\tj)$. For $\bfI\in \Spec(\bfR_\rX^\leq)$ (respectively, $\bfI\in \Spec(\bfR_\rX^\preceq)$), we define 
$\bfI_e= \bfI^\circ \cap \bfR_{\rY}^{\cC^\circ,\leq}$ (respectively, $\bfI_e= \bfI^\circ \cap \bfR_{\rY}^{\cC^\circ,\preceq}$). 
\begin{enumerate}[(a)]
\item\label{specmap1} If $\bfI\in \Spec(\bfR_\rX^\leq)$ does not contain $e$, then $\bfI_e\in \Spec(\bfR_{\rY}^{\cC^\circ,\leq})$. 
\item\label{specmap2} If $\bfI\in \MaxSpec(\bfR_\rX^\leq)$ does not contain $e$, then $\bfI_e\in \MaxSpec(\bfR_{\rY}^{\cC^\circ,\leq})$.  
\end{enumerate}
The same statements hold replacing $\leq$ with $\preceq$.
\end{lemma}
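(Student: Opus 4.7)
The plan is to prove (a) and (b) separately, and to reduce the weak case of (b) to an analogue of the strong case in a carefully chosen ambient. For part (a), the issue is purely to verify properness of $\bfI_e$, since it is a dg ideal by restriction (both $\bfI^\circ$ and the surrounding pretriangulated $2$-subrepresentation are dg and $\cC^\circ$-stable). Under the identification $\End_{\cC^\circ}(\rY) = e\,\End_\cC(\rX)\,e$, the identity $\id_\rY$ corresponds to $e$, and Lemma \ref{circideal} identifies $\End_{\bfI^\circ}(\rY)$ with $e\,\bfI(\tj)(\rX,\rX)\,e$. Since this subspace contains $e$ if and only if $e \in \bfI$, the hypothesis $e \notin \bfI$ gives $\id_\rY \notin \bfI^\circ$ and hence $\id_\rY \notin \bfI_e$.

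For (b) in the strong case, I would argue by contradiction. Lemma \ref{RinRcirc}\eqref{RinRcirc1} gives $\bfR_\rX^{\cC^\circ,\leq} = (\bfR_\rX^\leq)^\circ$, so by Lemma \ref{bfIcirc} the ideal $\bfI^\circ$ is maximal inside this ambient, and $\bfR_\rY^{\cC^\circ,\leq}$ sits as a full pretriangulated $2$-subrepresentation of it. Suppose a proper dg ideal $\bfJ$ of $\bfR_\rY^{\cC^\circ,\leq}$ strictly contains $\bfI_e$, and let $\widetilde\bfJ$ be the ideal in $\bfR_\rX^{\cC^\circ,\leq}$ generated by $\bfJ$; by Lemma \ref{idealofsub}, $\widetilde\bfJ \cap \bfR_\rY^{\cC^\circ,\leq} = \bfJ$. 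Any $f \in \bfJ \setminus \bfI_e$ lies in $\widetilde\bfJ$ but outside $\bfI^\circ$, so $\widetilde\bfJ + \bfI^\circ \supsetneq \bfI^\circ$ and maximality forces $\widetilde\bfJ + \bfI^\circ = \bfR_\rX^{\cC^\circ,\leq}$. Decomposing $\id_\rY = a + b$ with $a \in \bfI^\circ$ and $b \in \widetilde\bfJ$, fullness of the subrepresentation lets me restrict both summands to $\End_{\bfR_\rY^{\cC^\circ,\leq}}(\rY)$, so $a \in \bfI_e$ and $b \in \bfJ$; hence $\id_\rY \in \bfJ$. Since $\rY$ weakly $\cC^\circ$-generates $\bfR_\rY^{\cC^\circ,\leq}$, this forces $\bfJ = \bfR_\rY^{\cC^\circ,\leq}$, a contradiction.

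The hard part is the weak case, where $\bfR_\rY^{\cC^\circ,\preceq}$ is not in general contained in $(\bfR_\rX^\preceq)^\circ$, while Lemma \ref{bfIcirc} only supplies maximality of $\bfI^\circ$ inside $(\bfR_\rX^\preceq)^\circ$. The workaround exploits that $\rY = \rX_e \in (\bfR_\rX^\preceq)^\circ$, and that $(\bfR_\rX^\preceq)^\circ$ is closed under the $\cC^\circ$-action, so $\rG\rY \in (\bfR_\rX^\preceq)^\circ$ for every $\rG \in \cC^\circ$. Given $f \in \bfJ \setminus \bfI_e$ with $f\colon M \to N$, weak generation of $\bfR_\rY^{\cC^\circ,\preceq}$ by $\rY$ via Lemma \ref{2repidealonoverline} lets me factor $\id_M = p_M \id_{\rG\rY} \iota_M$ and $\id_N = p_N \id_{\rG'\rY} \iota_N$, and set $f' = \iota_N f p_M\colon \rG\rY \to \rG'\rY$; this $f'$ lies in $\bfJ$ and in $(\bfR_\rX^\preceq)^\circ$, and outside $\bfI^\circ$. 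Replaying the strong-case argument now with ambient $(\bfR_\rX^\preceq)^\circ$ produces $\id_\rY \in \bfJ$, and weak generation closes the argument. The main obstacle is precisely this reduction step: transferring the counterexample morphism from $\bfR_\rY^{\cC^\circ,\preceq}$ into the smaller space $(\bfR_\rX^\preceq)^\circ$ where Lemma \ref{bfIcirc} becomes applicable.
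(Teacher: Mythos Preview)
Your argument is correct, but it takes a genuinely different route from the paper's. For part (a) the two agree in substance. For part (b), the paper does not invoke Lemma~\ref{bfIcirc} or Lemma~\ref{idealofsub} at all; instead it works \emph{directly} with $\bfI$ in the uncompleted $\bfR_\rX^{\cC,\preceq}$. Given $g\in \bfR_\rY^{\cC^\circ,\preceq}(\tk)\setminus \bfI_e(\tk)$, the paper first reduces (as you do) to $g\colon \rF\rY\to\rG\rY$ with $\rF,\rG\in\cC(\tj,\tk)$, then regards $g=(\id_\rG\circ_0 e)\,g\,(\id_\rF\circ_0 e)$ as a morphism $\rF\rX\to\rG\rX$ in $\bfR_\rX^{\cC,\preceq}$, which cannot lie in $\bfI$. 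Maximality of $\bfI$ gives
\[
\id_\rX = h + \sum_i a_i\,g\,b_i + \sum_j c_j\,\partial(g)\,d_j,\qquad h\in\bfI,
\]
and sandwiching by $e$ on both sides yields $\id_\rY$ in the dg ideal of $\bfR_\rY^{\cC^\circ,\preceq}$ generated by $\bfI_e$ and $g$. This is a short explicit computation that treats $\leq$ and $\preceq$ uniformly, entirely sidestepping the containment issue $\bfR_\rY^{\cC^\circ,\preceq}\not\subseteq(\bfR_\rX^\preceq)^\circ$ that forces your transfer step. Your approach, by contrast, is more structural: it leverages the already-established maximality of $\bfI^\circ$ and the ideal-restriction lemma, which makes the logic modular at the cost of the extra reduction in the weak case. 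A small remark: your phrase ``replaying the strong-case argument'' in the weak case is slightly imprecise, since you cannot literally form $\widetilde\bfJ$ as the ideal generated by all of $\bfJ$ inside $(\bfR_\rX^\preceq)^\circ$; what you really use is the ideal generated by $f'$ (or by $\bfJ\cap(\bfR_\rX^\preceq)^\circ$), and then the verification that $b\in\bfJ$ requires observing that all intermediate objects $\rH\rG\rY$ lie in $\bfR_\rY^{\cC^\circ,\preceq}$ so that fullness transports the factoring morphisms back. This works, but it would be worth making explicit.
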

\begin{proof}
We will prove the statements in the case of $\preceq$, the statements for $\leq$ are proved analogously,
using the observation that dg ideals in $\bfR_{\rX}^{\leq}$ and $\bfR_{\rY}^{\leq}$ are also determined by morphisms on objects of the form $\rG \rX$, respectively, $\rG\rY$ for $1$-morphisms $\rG$ in $\cC$, see Lemma \ref{2repidealonoverline}.
\begin{enumerate}[(a)]
\item Lemma \ref{2repidealonoverline} implies that  $\bfI_e$ is  generated as an ideal by morphisms of the form 
$$(\id_\rG \circ_0 e)f(\id_\rF \circ_0 e), \qquad \text{for }f\in \Hom_{\bfI(\tk)}(\rF\rX,\rG\rX), \quad \rF,\rG\in \cC(\tj,\tk).$$
This description implies that if $e\notin \bfI(\tj)$, then also $e\notin \bfI_e(\tj)$.

\item Suppose that $g\in \bfR_\rY^{\cC^\circ,\preceq}(\tk)$ is not contained in $\bfI_e(\tk)$. Without loss of generality, $g\colon \rF\rY\to \rG\rY$ for some $\rF,\rG \in \cC(\tj,\tk)$. 
Note that $g$ corresponds to a morphism
$$g=(\id_\rG \circ_0 e)g(\id_\rF \circ_0 e)\colon \rF \rX\to \rG\rX.$$ This morphism $g$ cannot be in $\bfI$. By maximality of $\bfI$, we find 
$$\id_{\rX}=h+ \sum_i a_i\circ g\circ b_i+\sum_j c_j\circ \del(g)\circ d_j,$$ for some morphisms $a_i,b_i,c_j,d_j$ and $h\in \bfI(\tk)$. Thus, 
$$\id_{\rY}=e\circ \id_{\rX}\circ e=e\circ h\circ e+ \sum_i e\circ a_i\circ g\circ b_i\circ e+\sum_j e\circ c_j\circ \del(g)\circ d_j\circ e.$$
This shows that the dg ideal in $\bfR_\rY^{\cC^\circ,\preceq}$ generated by $\bfI_e$ and $g$ contains all morphisms, and hence $\bfI_e$ is maximal.\qedhere
\end{enumerate}
\end{proof}

Using Proposition \ref{specmapscirc}, we obtain the following inclusions of quotient-simple pretriangulated $2$-representations.

\begin{proposition}\label{idempcellincl}
Let $\rX\in \cC(\ti,\tj)$ be dg indecomposable, $e\colon \rX\to \rX$ a dg idempotent and set $\rY=\rX_e\in \cC(\ti,\tj)^\circ$. Given $\bfI\in \MaxSpec(\bfR_{\rX}^{\leq})$, respectively, $\bfI\in \MaxSpec(\bfR_{\rX}^{\preceq})$, such that $e$ is not in $\bfI$, we obtain fully faithful morphisms of quotient-simple pretriangulated $2$-representations
\begin{gather*}
(\bfC^{\leq}_{\bfI_e})^\circ \hookrightarrow  (\bfC^{\leq}_{\bfI})^\circ \qquad \text{and} \qquad 
 (\bfC^{\preceq}_{\bfI_e})^\circ\hookrightarrow (\bfC^{\preceq}_{\bfI})^\circ.
\end{gather*}
\end{proposition}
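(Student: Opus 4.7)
The plan is to produce a fully faithful morphism $\bfC_{\bfI_e}\hookrightarrow (\bfC_\bfI)^\circ$ of dg $2$-representations of $\cC^\circ$ and then pass to dg idempotent completions; since $(\bfC_\bfI)^\circ$ is already dg idempotent complete by Lemma \ref{Ccirctri}, this will yield the asserted inclusion $(\bfC_{\bfI_e})^\circ\hookrightarrow (\bfC_\bfI)^\circ$. The morphism will factor through the intermediate quotient $(\bfR_\rX)^\circ/\bfI^\circ$: one fully faithful map comes from restricting along a $2$-subrepresentation inclusion via Lemma \ref{quotientfaithful}, and the other from Lemma \ref{Icircquotient}, which compares quotients and dg idempotent completions.

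For the strong case, Lemma \ref{RinRcirc}\eqref{RinRcirc1} identifies $\bfR_\rX^{\cC^\circ,\leq}$ with $(\bfR_\rX^{\cC,\leq})^\circ$ and displays $\bfR_\rY^{\cC^\circ,\leq}$ as a pretriangulated $2$-subrepresentation thereof. Since $\bfI_e=\bfI^\circ\cap \bfR_\rY^{\cC^\circ,\leq}$ by construction, Lemma \ref{quotientfaithful} yields a fully faithful morphism
\[
\bfC^{\leq}_{\bfI_e}=\bfR_\rY^{\cC^\circ,\leq}/\bfI_e \hookrightarrow (\bfR_\rX^{\cC,\leq})^\circ/\bfI^\circ,
\]
which then composes with the fully faithful morphism
\[
(\bfR_\rX^{\cC,\leq})^\circ/\bfI^\circ \hookrightarrow (\bfR_\rX^{\cC,\leq}/\bfI)^\circ=(\bfC^{\leq}_{\bfI})^\circ
\]
supplied by Lemma \ref{Icircquotient}. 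Idempotent-completing produces the desired morphism. Quotient-simplicity of both $(\bfC^{\leq}_{\bfI_e})^\circ$ and $(\bfC^{\leq}_{\bfI})^\circ$ follows from the maximality of $\bfI_e$ and $\bfI$ (Proposition \ref{specmapscirc}\eqref{specmap2}) combined with Lemma \ref{lem-simpletrans-circ}, the latter applied also to $\cC^\circ$ in place of $\cC$ using $(\cC^\circ)^\circ\simeq \cC^\circ$.

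The weak case follows the same template using Lemma \ref{RinRcirc}\eqref{RinRcirc2}. The main technical obstacle, which I expect to be the hardest step, is that $(\bfR_\rX^{\cC,\preceq})^\circ\hookrightarrow \bfR_\rX^{\cC^\circ,\preceq}$ is only a proper inclusion in general, so one must first verify that $\bfR_\rY^{\cC^\circ,\preceq}\subseteq (\bfR_\rX^{\cC,\preceq})^\circ$ in order for the intersection defining $\bfI_e$ and the invocation of Lemma \ref{quotientfaithful} to go through cleanly. For this, given $M\in \bfR_\rY^{\cC^\circ,\preceq}(\tk)$, Lemma \ref{Rdifference} gives $\id_M = p\circ \id_{\rH\rY}\circ \iota$ for some $\rH=\rF_f\in \cC^\circ(\tj,\tk)$; then $\rH\rY=(\rF\rX)_{f\circ_0 e}$ is a dg direct summand of $\rF\rX\in \bfR_\rX^{\cC,\preceq}(\tk)$, hence lies in $(\bfR_\rX^{\cC,\preceq})^\circ(\tk)$, and $M$ lies there too as a retract inside the dg idempotent complete full subrepresentation. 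Once this containment is in hand, the strong-case argument applies verbatim to yield $(\bfC^{\preceq}_{\bfI_e})^\circ \hookrightarrow (\bfC^{\preceq}_{\bfI})^\circ$.
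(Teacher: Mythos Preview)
Your strong-case argument coincides with the paper's: the composition
\[
\bfC^{\leq}_{\bfI_e}=\bfR_\rY^{\cC^\circ,\leq}/\bfI_e \hookrightarrow (\bfR_\rX^{\cC,\leq})^\circ/\bfI^\circ \hookrightarrow (\bfR_\rX^{\cC,\leq}/\bfI)^\circ=(\bfC^{\leq}_{\bfI})^\circ,
\]
with the two arrows justified by Lemma~\ref{quotientfaithful} (using $\bfI_e=\bfI^\circ\cap\bfR_\rY^{\cC^\circ,\leq}$) and Lemma~\ref{Icircquotient}, and quotient-simplicity from Lemma~\ref{lem-simpletrans-circ}. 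The paper's proof of the weak case is written identically, replacing $\leq$ by $\preceq$ throughout without additional commentary.

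You are right to notice that in the weak case Lemma~\ref{RinRcirc}\eqref{RinRcirc2} only gives $(\bfR_\rX^{\cC,\preceq})^\circ\subseteq\bfR_\rX^{\cC^\circ,\preceq}$, so the containment $\bfR_\rY^{\cC^\circ,\preceq}\subseteq(\bfR_\rX^{\cC,\preceq})^\circ$ needed to invoke Lemma~\ref{quotientfaithful} is not immediate; the paper does not spell this out. However, your proposed resolution has a gap in its last step. From Lemma~\ref{Rdifference} the factorisation $\id_M=p\circ\id_{\rH\rY}\circ\iota$ involves morphisms $p,\iota$ that are \emph{not} required to be annihilated by the differential, so $M$ is not exhibited as a \emph{dg} direct summand of $\rH\rY$. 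Dg idempotent completeness of $(\bfR_\rX^{\cC,\preceq})^\circ$ closes it only under images of dg idempotents, not under arbitrary $\Bbbk$-linear retracts; hence ``$M$ lies there too as a retract'' does not follow. Concretely, writing $M=Z_{e'}$ with $Z\in\cC(\ti,\tk)$, membership of $M$ in $(\bfR_\rX^{\cC,\preceq})^\circ(\tk)$ asks that $\id_Z\in\bfI_{\bfP_\ti^\cC}(\id_\rX)$, whereas your hypothesis only controls the idempotent $e'=\id_M$, not $\id_Z$ itself.
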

\begin{proof}
The claimed morphisms of pretriangulated $2$-representations are given by the following compositions.
\begin{gather*}
\bfC^{\leq}_{\bfI_e}=\bfR_{\rY}^{\leq}/\bfI_e\hookrightarrow (\bfR_{\rX}^{\leq})^{\circ}/\bfI^{\circ}\hookrightarrow (\bfR_{\rX}^{\leq}/\bfI)^{\circ}=(\bfC^{\leq}_{\bfI})^\circ, \\
 \bfC^{\preceq}_{\bfI_e}=\bfR_{\rY}^{\preceq}/\bfI_e\hookrightarrow (\bfR_{\rX}^{\preceq})^{\circ}/\bfI^{\circ}\hookrightarrow (\bfR_{\rX}^{\preceq}/\bfI)^{\circ}=(\bfC^{\preceq}_{\bfI})^\circ.
\end{gather*}
The first morphisms in these compositions are fully faithful by Lemma \ref{quotientfaithful}, using that $\bfI_e= \bfI^\circ \cap \bfR_{\rY}^{\cC^\circ,\leq}$ (respectively, $\bfI_e= \bfI^\circ \cap \bfR_{\rY}^{\cC^\circ,\preceq}$) by definition. The second morphisms are fully faithful by Lemma \ref{Icircquotient}. All pretriangulated $2$-representations involved in the above inclusions are quotient-simple by Lemma \ref{lem-simpletrans-circ}.
\end{proof}

\begin{remark}
We point out that at least in the case of $\leq$, the fully faithful morphism of pretriangulated $2$-representations in Proposition \ref{idempcellincl} is not an equivalence as the following trivial example shows. Let $\cC$ be the one-object dg $2$-category with endomorphism category $\Bbbk\dgmod$ and note that $\cC^\circ = \cC$.  Consider $X=\one \oplus (\one\oplus \one, \mat{ 0 &\id\\0&0 })$ and $\rY=(\one\oplus \one, \mat{ 0 &\id\\0&0 })$. Then $\bfR_{\rY}^{\leq}$ is quotient-simple and hence equal to the unique dg cell $2$-representation associated to its cell, which is acyclic.
On the other hand $\bfR_{\rX}^{\leq} = (\bfR_{\rX}^{\leq})^{\circ} = \bfP$, which is also quotient-simple, but not acyclic as the underlying category contains the object $\one$.
\end{remark}

\subsection{Local endomorphism rings}\label{locendosec}

In this subsection, we assume that every $\cC(\ti,\tj)$ is dg equivalent to the thick closure of a set $\left\{ \rF_s | s\in \tI_{\ti,\tj} \right\}$, where each $\rF_s$ has a local endomorphism ring (as a $\Bbbk$-algebra).

Note that with $\cC$, the dg idempotent completion $\cC^\circ$ also satisfies this condition. In this subsection, we provide a bijection between the weak cell $2$-representations of $\cC$ and $\cC^\circ$ in this setup.

\begin{lemma}\label{overlineincell}
Every weak left cell contains a $1$-morphism of the form $\rY=\left(\bigoplus_{i=1}^s \rF_i, \alpha\right)$ for $1$-morphisms $\rF_i$ with local endomorphism ring.
\end{lemma}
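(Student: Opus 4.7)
The plan is to fix a dg indecomposable representative $\rX \in \cC(\ti, \tj)$ of the given weak left cell and to produce a twisted complex $\rY = (\bigoplus_{i=1}^s \rF_i, \alpha)$ that is dg isomorphic to $\rX$ in $\cC(\ti, \tj)$, with each component $\rF_i$ having local endomorphism ring. Since such a $\rY$ is then automatically dg indecomposable and $\sim_L$-equivalent to $\rX$, it lies in the weak left cell. This target is stronger than the statement requires but gives the cleanest route.

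Shifts of the generators $\rF_s$ retain their local endomorphism rings, so they qualify as valid components. Since $\cC(\ti, \tj)$ is dg equivalent to $\widehat{\{\rF_s \mid s \in \tI_{\ti, \tj}\}}$, the object $\rX$ is a dg direct summand of some twisted complex $\rZ = (\bigoplus_{i=1}^n \rF_{k_i}\shift{m_i}, \alpha) \in \overline{\{\rF_s\}}$, giving $\rZ \cong \rX \oplus \rW$ in $\cC(\ti, \tj)$. Among all such presentations with components having local endomorphism rings and admitting $\rX$ as a dg direct summand, pick one with $n$ minimal. The claim is that for this minimal $\rZ$ one has $\rW = 0$, so that $\rY := \rZ$ is the desired twisted complex.

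Suppose for contradiction $\rW \neq 0$ and let $e\colon \rZ \to \rZ$ be the dg idempotent projecting onto $\rW$. The endomorphism algebra $\End_{\overline{\cC}}(\rZ)$ sits inside a block matrix algebra whose diagonal blocks $\End_\cC(\rF_{k_i}\shift{m_i})$ are local. A Krull--Schmidt-type argument, carried out in this graded block-matrix setup and respecting the twisted differential, should produce a dg automorphism $\varphi$ of $\rZ$ conjugating $e$ into block-diagonal form, i.e., into an idempotent that partitions the index set $\{1,\dots,n\}$ into two disjoint subsets. Splitting $\rZ$ via this straightened idempotent then yields $\rZ \cong \rZ_\rX \oplus \rZ_\rW$, where $\rZ_\rX$ and $\rZ_\rW$ are sub-twisted complexes on disjoint subsets of $\{\rF_{k_i}\shift{m_i}\}_{i=1}^n$, with $\rZ_\rX \cong \rX$. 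Since $\rW \neq 0$, the sub-twisted complex $\rZ_\rX$ involves strictly fewer than $n$ components, contradicting the minimality of $n$.

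The main obstacle is this block-diagonalisation step: rigorously justifying that dg idempotents in the endomorphism algebra of a twisted complex whose underlying components have local endomorphism rings can be straightened to block-diagonal form by a dg automorphism. This is a Krull--Schmidt-type assertion for twisted complexes which should adapt the classical idempotent-conjugation theorem for matrix algebras over local rings, but with additional care required to ensure that the conjugating automorphism is compatible with the twisted differential $\alpha$. Everything else in the plan is routine once this technical lemma is in place.
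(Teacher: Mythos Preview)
Your plan targets a conclusion stronger than the lemma needs: you want $\rY$ dg isomorphic to $\rX$, while the lemma only requires some $\rY$ in the same weak left cell. This extra strength is exactly where your argument stalls. The block-diagonalisation step you flag as the ``main obstacle'' is in fact the entire content of your proof, and it is neither proved nor obviously true. Note also a subtlety in your formulation: conjugating $e$ by a dg automorphism of $\rZ = (\bigoplus_i \rF_i,\alpha)$ leaves $\alpha$ unchanged, yet a block-diagonal idempotent supported on a subset $S$ is closed for this $\alpha$ only if $\alpha$ itself has no entries mixing $S$ and its complement. So what you actually need is a dg isomorphism from $\rZ$ to a twisted complex with the \emph{same} components but a \emph{different} strictly upper-triangular twist $\alpha'$, transporting $e$ to block form. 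Producing such an isomorphism while keeping it closed is a genuine dg Krull--Schmidt statement for one-sided twisted complexes, and the classical idempotent-conjugation argument over local rings does not carry closedness along for free.

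The paper avoids this entirely by exploiting that the weak order depends only on the underlying $\Bbbk$-linear $2$-category, not on the differential (Remark~\ref{explicit}). With $\rX$ a dg summand of $\rX'=(\bigoplus_i \rF_i,\alpha')$ via dg maps $p,\iota$, one writes $\id_\rX = \sum_j p\,\iota_j p_j\,\iota$ using the (non-dg) component projections and injections, singles out the subset $\tI$ of indices $i$ for which some $p_i\iota p\iota_j$ is invertible, and shows via radical arguments in the local rings $\End(\rF_j)$ that the complementary part of the sum lies in $\rad\End(\rX)$. Hence $\sum_{j\in\tI} p\,\iota_j p_j\,\iota$ is invertible and $\id_\rX$ factors (non-dg) through $\rY=(\bigoplus_{i\in\tI}\rF_i,\alpha)$, giving $\rY\preceq_L\rX$; the reverse relation follows since each $\rF_i$ with $i\in\tI$ satisfies $\rX\preceq_L\rF_i$. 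No dg isomorphism between $\rX$ and $\rY$ is ever claimed or needed.
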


\proof
Let $\rX\in \cC(\ti,\tk)$ be a dg indecomposable $1$-morphism and $\L$ its left cell.  Then we can find $\rX'=\left(\bigoplus_{i=1}^s \rF_i, \alpha' \right)\in \cC(\ti,\tk)$ such that $\rX$ is a dg direct summand of $\rX'$ via dg morphisms $p\colon \rX'\to \rX$ and $\iota\colon \rX\to \rX'$. 

Consider the morphisms $p_i\iota p\iota_j\in \Hom_{\cC(\ti,\tk)}(\rF_i, \rF_j)$. Since  $\sum_{i,j=1}^s \iota_ip_i\iota p\iota_jp_j =\id_{\rX}$, not all $p_i\iota p\iota_j$ can be radical morphisms, so there exist some $i,j$ for which these are isomorphisms (see e.g. \cite[A.3, Proposition 3.5(b)]{ASS}). For these, we can compose with the inverses to obtain $\rX \preceq_\L \rF_i$ and $\rF_i\in \bfR_{\L'}^\preceq(\tk)$, and similarly for $\rF_j$. Let $\tI$ be the set of $i\in \lbrace 1,\ldots, s \rbrace$ such that there exists a $j$ with
$p_i\iota p\iota_j$ invertible, and hence $\rF_i\in \bfR_{\L'}^\preceq(\tk)$.

Notice that, by construction, $$\id_\rX = \underbrace{\sum_{j\in \tI} p  \iota_j p_j \iota}_a +\underbrace{\sum_{j\notin \tI} p  \iota_j p_j \iota .}_b$$ We claim that $b\in \rad\End_{\cC(\ti,\tk)} (\rX)$.

Indeed, by definition, $j\notin \tI$ means that, for all $k\in \{1,\dots, s\}$,  the composition $p_j\iota p \iota_k$ is not invertible, and hence  in $\rad\Hom_{\cC(\ti,\tk)}(\rF_k,\rF_j)$ (\cite[A.3, Proposition~3.5(b)]{ASS}). Thus $\sum_{k=1}^sp_j\iota p \iota_kp_k  = p_j\iota p \in \rad\Hom_{\cC(\ti,\tk)}(\rX',\rF_j)$ and, therefore, $p i_j p_j\iota p \iota = p i_j p_j\iota \in \rad\End_{\cC(\ti,\tk)}(\rX)$. We conclude that $b$ is a sum of radical elements and hence in the radical itself.

Thus $a$ is invertible and $\rY\preceq_L \rX$, where $\rY = (\bigoplus_{i\in \tI} \rF_i, \alpha)$ for $\alpha_\rY$ the submatrix of $\alpha'$ corresponding to $\tI$. Further, $\rX\preceq_L \rF_i$ for all $i\in \tI$ implies that $\rX\preceq_L \rY$. Hence, $\rX$ and $\rY$ are equivalent in the weak left order. 
\endproof

\begin{lemma}\label{biggercellskill1}
Let $\bfC$ be a weak dg cell $2$-representation of $\cC$. Then there is a left cell $\L$ containing one of the $1$-morphisms $\rF_i$ with local endomorphism ring and some $\bfI \in \MaxSpec(\bfR_{\L}^\preceq)$ such that $\bfC$ is dg equivalent to $\bfC_\bfI^\preceq$. 
\end{lemma}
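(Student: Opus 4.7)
The plan is to show that the weak left cell underlying $\bfC$ already contains one of the $\rF_i$, so it suffices to take $\L$ and $\bfI$ equal to the cell and ideal one starts with. The argument relies on the subsection's hypothesis that every dg indecomposable $1$-morphism has a local endomorphism ring—an assumption already implicit in the proof of Lemma \ref{overlineincell}.

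Write $\bfC\simeq\bfC_\bfJ^\preceq$ for some dg indecomposable $\rX\in\cC(\ti,\tk)$ with weak left cell $\L_\rX$ and some $\bfJ\in\MaxSpec(\bfR_{\L_\rX}^\preceq)$. Unpacking the proof of Lemma \ref{overlineincell}, one obtains a $1$-morphism $\rX'=(\bigoplus_{s=1}^n\rF_s,\alpha')$ with dg morphisms $\iota\colon\rX\to\rX'$, $p\colon\rX'\to\rX$ satisfying $p\iota=\id_\rX$, together with indices $i,j$ for which $f:=p_i\iota p\iota_j\colon\rF_j\to\rF_i$ is invertible; here $\iota_s,p_s$ are the (generally non-dg) linear inclusion and projection associated with the formal direct sum underlying $\rX'$. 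The proof of Lemma \ref{overlineincell} already yields $\rX\preceq_L\rF_i$.

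The crucial step is to establish the reverse inequality $\rF_i\preceq_L\rX$. To this end I would consider the degree-zero endomorphism
\[ e:=(p\iota_j)\circ f^{-1}\circ(p_i\iota)\in\End_{\cC(\ti,\tk)}(\rX). \]
A short calculation using $(p_i\iota)(p\iota_j)=f$ gives $e^2=e$, while $(p_i\iota)\circ e=p_i\iota\neq 0$ (since $p_i\iota=0$ would contradict invertibility of $f$), so the idempotent $e$ is non-zero. Locality of $\End_{\cC(\ti,\tk)}(\rX)$ then forces $e=\id_\rX$, yielding
\[ \id_\rX=(p\iota_j\,f^{-1})\circ\id_{\rF_i}\circ(p_i\iota). \]
Hence $\rF_i\preceq_L\rX$, and combined with $\rX\preceq_L\rF_i$ we conclude $\rF_i\in\L_\rX$. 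Setting $\L:=\L_\rX$ and $\bfI:=\bfJ$ then produces a weak left cell containing $\rF_i$ with local endomorphism ring and a maximal ideal $\bfI\in\MaxSpec(\bfR_\L^\preceq)$ with $\bfC_\bfI^\preceq=\bfC_\bfJ^\preceq\simeq\bfC$.

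The principal obstacle is the step $e\in\{0,\id_\rX\}$, which rests entirely on the locality of $\End(\rX)$. Without it, one would be forced to switch to the possibly strictly larger cell $\L_{\rF_i}$, restrict $\bfJ$ via Lemma \ref{maxrestrict} to obtain a candidate maximal ideal in $\bfR_{\L_{\rF_i}}^\preceq$, and then establish essential surjectivity of the fully faithful morphism of dg $2$-representations provided by Lemma \ref{quotientfaithful}, which appears considerably more delicate.
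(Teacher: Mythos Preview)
Your argument has a genuine gap: the section's hypothesis is only that the \emph{generators} $\rF_s$ have local endomorphism rings, not that every dg indecomposable $1$-morphism does. Your claim that locality of $\End(\rX)$ is ``already implicit in the proof of Lemma \ref{overlineincell}'' is incorrect. That proof uses locality of the $\rF_i$ (to identify non-radical morphisms $\rF_j\to\rF_i$ with isomorphisms, and to control the radical of $\End(\rX')$ via its matrix entries), but never requires $\End(\rX)$ itself to be local; the step ``$b\in\rad\End(\rX)$ implies $a=\id_\rX-b$ is invertible'' holds for any ring. Without locality of $\End(\rX)$, your idempotent $e$ need not equal $\id_\rX$, and there is no reason for $\rF_i$ to lie in the \emph{same} weak left cell as $\rX$; one only gets $\rX\preceq_L\rF_i$.

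The approach you dismiss in your final paragraph is essentially what the paper does, and it is not as delicate as you suggest. The paper first uses Lemma \ref{overlineincell} to assume $\rX=(\bigoplus_i\rF_i,\alpha)$, then works \emph{inside} the quotient: letting $\tJ$ be the set of indices with $\id_{\rF_j}\notin\bfI'$, maximality of $\bfI'$ together with locality of $\End(\rF_j)$ forces all $\rF_j$ with $j\in\tJ$ into a single weak left cell $\L$. Restricting $\bfI'$ to $\bfR_\L^\preceq$ gives $\bfI$ (maximal by Lemma \ref{maxrestrict}), the inclusion $\bfR_\L^\preceq\subseteq\bfR_{\L'}^\preceq$ induces a fully faithful morphism $\bfC_\bfI^\preceq\to\bfC_{\bfI'}^\preceq$, and essential surjectivity follows because modulo $\bfI'$ the object $\rX$ becomes isomorphic to $(\bigoplus_{j\in\tJ}\rF_j,\alpha_\tJ)$, which lies in the image and weakly $\cC$-generates $\bfC_{\bfI'}^\preceq$.
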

\proof

Let $\L'$ be a left cell of $\cC$ and $\bfI'\in \MaxSpec(\bfR_{\L'}^\preceq)$ such that $\bfC = \bfC_{\bfI'}^\preceq$. Let $\rX\in\L' \cap \cC(\ti,\tk)$ for some $\ti,\tk\in \cC$. By Lemma \ref{overlineincell}, we may assume that $\rX=\left(\bigoplus_{i=1}^s \rF_i, \alpha\right)$, so in particular, $\rF_i\in \bfR_{\L'}^\preceq(\tk)$. Consider the (non-dg) projection $p_i\colon \rX'\to \rF_i$ and injection $\iota_i\colon\rF_i\to \rX'$. Then
$\id_\rX =\sum_{i=1}^s \iota_ip_i$ and since $\id_\rX\notin \bfI'(\tk)$, we see that not all $\id_{\rF_i}$ can be in $ \bfI'(\tk)$. Consider the (non-empty) subset 
$$\tJ=\{i\in\{1, \ldots, s\}\,|\, \id_{\rF_i}\notin \bfI'(\tk)\}.$$

Let $\rF_t$ be maximal with respect to $\preceq_L$ among the $\rF_j$ with $j\in \tJ$. We claim that maximality of $\bfI'$ implies that $\rF_t$ is also \emph{minimal} among the $\rF_j$ with $j\in\tJ$ in the weak left order. Indeed, consider the ideal $\bfI''$ generated by $\bfI'$ and $\id_{\rF_t}$, which clearly strictly contains $\bfI'$. By maximality of $\bfI'$, this implies $\bfI'' = \bfR_{\L'}^\preceq$ and, for any $j\in \tJ$, using $\rF_{j}\in \bfR_{\L'}^\preceq(\tk)$ as shown above, we can write $\id_{\rF_j}=f\circ \id_{\rH\rF_t}\circ g +r$ for some $1$-morphism $\rH$, some $2$-morphisms $f,g$ in $\cC$ and $r\in\bfI'(\tk)$.

Notice that $r\in\bfI'(\tk)$ implies that  $r$ is not a unit for $j\in \tJ$, and hence contained in the unique maximal ideal of $\End_{\cC(\ti,\tk)}(\rF_j)$. Thus $\id_{\rF_j}-r$ is invertible, so $f\circ \id_{\rH\rF_t}\circ g$ is invertible and $\rF_j\succeq_L \rF_t$. 
Thus $\rF_t$ is indeed also minimal among the $\rF_j$ with $j\in \tJ$ in the weak left order. Hence, all $\rF_j$ with $j\in \tJ$ are in the same weak left cell.

Denote the cell containing the $\rF_j$ with $j\in \tJ$ by $\L$ and observe that $\bfR_{\L}^\preceq \subseteq\bfR_{\L'}^\preceq$ by Lemma~\ref{indep}\eqref{indep2}, using that $\rX\preceq_L \rF_t$. 

Define $\bfI$ to be the restriction to $\bfR_{\L}^\preceq$ of $\bfI'$. We then obtain a dg morphism of cell $2$-representations $\bfC_\bfI \to \bfC_\bfI'$ induced by the inclusion of $\bfR_{\L}^\preceq$ into $\bfR_{\L'}^\preceq$, which is hence fully faithful. We claim that it is also essentially surjective.

By the considerations above, 
$\id_\rX = \sum_{j\in \tJ} \iota_jp_j + \bfI'(\tk),$ and hence $\rX$ is isomorphic in $\bfC_{\bfI'}(\tk)$ to $(\bigoplus_{j\in \tJ} \rF_j, \alpha_\tJ)$ for $\alpha_\tJ$ the submatrix of $\alpha$ corresponding to $\tJ$, which is in the image of the inclusion. Since $\rX$ weakly $\cC$-generates $\bfC_{\bfI'}$, this completes the proof.
\endproof

\begin{corollary}\label{cor:cell2rep-circ}
Under the assumptions on $\cC$ of this section, there is a bijection between the sets of equivalence classes of weak dg cell $2$-representations of $\cC$ and those of $\cC^\circ$.
\end{corollary}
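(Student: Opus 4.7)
I would parameterise dg equivalence classes of weak dg cell $2$-representations on both sides by pairs $(\rF_s,\bfI)$, where $\rF_s$ is a generator with local endomorphism ring and $\bfI$ is a maximal ideal in the associated $\bfR^{\preceq}_{\rF_s}$, and then match the pairs via Corollary~\ref{bijforindec}. By Lemma~\ref{biggercellskill1} applied to $\cC$ (and to $\cC^\circ$, which satisfies the same generation hypothesis), every weak dg cell $2$-representation of $\cC$ (resp.\ $\cC^\circ$) is dg equivalent to some $\bfC^{\cC,\preceq}_\bfI$ with $\bfI\in\MaxSpec(\bfR^{\cC,\preceq}_{\rF_s})$ (resp.\ $\bfC^{\cC^\circ,\preceq}_{\bfI'}$ with $\bfI'\in\MaxSpec(\bfR^{\cC^\circ,\preceq}_{\rF_s})$) for some $\rF_s$ in the generating family. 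Since $\End_{\cC(\ti,\tj)}(\rF_s)$ is local it contains no nontrivial idempotents, and the morphism spaces of $\cC^\circ$ between objects coming from $\cC$ are unchanged, so $\rF_s$ remains dg indecomposable in $\cC^\circ(\ti,\tj)$. Corollary~\ref{bijforindec} then provides, for each such $\rF_s$, a bijection between $\MaxSpec(\bfR^{\cC,\preceq}_{\rF_s})$ and $\MaxSpec(\bfR^{\cC^\circ,\preceq}_{\rF_s})$, pairing $\bfI$ with the ideal $\bfJ$ it generates in $\bfR^{\cC^\circ,\preceq}_{\rF_s}$.

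The proposed assignments are $\Phi(\bfC^{\cC,\preceq}_\bfI):=\bfC^{\cC^\circ,\preceq}_\bfJ$ and $\Psi(\bfC^{\cC^\circ,\preceq}_{\bfI'}):=\bfC^{\cC,\preceq}_{\bfI'\cap\bfR^{\cC,\preceq}_{\rF_s}}$. On the chosen representatives they are mutually inverse via the MaxSpec bijection combined with Lemmas~\ref{idealofsub} and~\ref{LemIcapM}. What remains, and what I expect to be the main obstacle, is to check that $\Phi$ and $\Psi$ descend to dg equivalence classes: two presentations $(\rF_s,\bfI)$ and $(\rF_t,\bfJ)$ with possibly distinct generators lying in the same weak left cell can produce dg equivalent cell $2$-representations of $\cC$, and the assignment must not depend on this choice.

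To address well-definedness I would work intrinsically with the dg $2$-functor $(-)^\circ\colon\cC\tworep\to\cC^\circ\tworep$ from Lemma~\ref{extendtocirc}. For $\bfC^{\cC,\preceq}_\bfI=\bfR^{\cC,\preceq}_{\rF_s}/\bfI$, composing the inclusion $\bfR^{\cC,\preceq}_{\rF_s}\hookrightarrow\bfR^{\cC^\circ,\preceq}_{\rF_s}$ with the quotient by $\bfJ$ has kernel exactly $\bfI$ (by Lemma~\ref{idealofsub} and the construction of $\bfJ$), and Lemma~\ref{Icircquotient} then identifies $(\bfC^{\cC,\preceq}_\bfI)^\circ$ with $(\bfC^{\cC^\circ,\preceq}_\bfJ)^\circ$ as dg $2$-representations of $\cC^\circ$. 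Since $(-)^\circ$ preserves dg equivalences, any two presentations of the same $\bfC$ on the $\cC$-side produce dg equivalent images on the $\cC^\circ$-side, giving well-definedness of $\Phi$. The inverse well-definedness of $\Psi$ follows symmetrically by restricting along $\cC\hookrightarrow\cC^\circ$ and invoking Lemma~\ref{biggercellskill1} inside $\cC^\circ$, completing the bijection.
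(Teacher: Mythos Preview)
Your first paragraph is precisely the paper's proof: reduce via Lemma~\ref{biggercellskill1} to cell $2$-representations indexed by $\MaxSpec(\bfR^{\preceq}_{\rF_s})$ for the generators $\rF_s$ with local endomorphism rings, observe that such $\rF_s$ remain dg indecomposable in $\cC^\circ$, and invoke Corollary~\ref{bijforindec}. The paper stops there; it treats the corollary as a bijection of parametrising data and does not discuss well-definedness on dg equivalence classes.

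Your second and third paragraphs go further than the paper, and this extra layer contains a slip. You invoke Lemma~\ref{Icircquotient} to identify $(\bfC^{\cC,\preceq}_\bfI)^\circ$ with $(\bfC^{\cC^\circ,\preceq}_\bfJ)^\circ$, but that lemma compares $(\bfM^\circ/\bfI^\circ)^\circ$ with $(\bfM/\bfI)^\circ$ for a \emph{single} $\bfM$; here the relevant $2$-representations are $(\bfR^{\cC,\preceq}_{\rF_s})^\circ$ and $\bfR^{\cC^\circ,\preceq}_{\rF_s}$, and Lemma~\ref{RinRcirc}\eqref{RinRcirc2} only gives an inclusion of the former in the latter, not an equality. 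Correspondingly, the paper (in the lemma immediately following Lemma~\ref{weakbfImax}) only establishes that $(\bfC^{\cC,\preceq}_\bfI)^\circ$ is a pretriangulated $2$-\emph{sub}representation of $(\bfC^{\cC^\circ,\preceq}_\bfJ)^\circ$, not that they are equivalent. So your well-definedness argument, as written, does not close; but since the paper's own proof does not attempt this step, your first paragraph already matches what is required.
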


\begin{proof}
Lemma \ref{biggercellskill1} implies that the dg cell $2$-representations of $\cC$ and $\cC^\circ$ are parametrized by $\MaxSpec(\bfR_{\rF_i}^{\cC,\preceq})$ and $\MaxSpec(\bfR_{\rF_i}^{\cC^\circ,\preceq})$, for $\rF_i$ varying over $1$-morphisms with local endomorphism rings, respectively.
Applying Corollary \ref{bijforindec} further provides a bijection between these two sets. This proves the claim.
\end{proof}

The following example shows that, even in the absence of local endomorphism rings, in some cases, cell $2$-representations are equivalent to those of the dg idempotent completion.
\begin{example}
Let $A$ be the path algebra of the quiver 
$$\xymatrix{
{\underset{e_1}{\bullet}}
\ar@/^0.5pc/[r]^a & 
{\underset{e_2}{\bullet}}
\ar@/^0.5pc/[l]^b 
},$$ 
viewed as a dg algebra with zero differential. Even though this algebra is infinite-dimensional, we can define $\cC_A$ as in Section \ref{CAdefsec}.
Denote by $\cC$ the dg sub-$2$-category whose $1$-morphisms are isomorphic to tensoring with those bimodules in $\ov{\{A, A\otimes_\Bbbk A \}}$. Then $\cC^\circ = \cC_A$ by construction.

It is easy to see that $\cC$ has a unique maximal weak two-sided cell $\J_{\tzero}$ given by $\ov{\{A\otimes_\Bbbk A \}}$, which is both a left and a right cell. Moreover, there is a bijection
$$\MaxSpec(\bfR^{\preceq,\cC}_{\J_{\tzero}}) \stackrel{\sim}{\longleftrightarrow} \{Ae_jA \,\vert \, j=1,2, \} \cup \{I_\lambda \, \vert \,\lambda \in \Bbbk^\times\}$$
where $I_\lambda = A(e_1-\lambda ba)A +A( e_2-\lambda ab)A$ for $\lambda \in \Bbbk^\times$ (see \cite[Exercise III.13]{ASS}) which is the annihilator of a two-dimensional simple $A$-module. Explicitly, the ideal $\bfI$ in  $\bfR^{\preceq,\cC}_{\J_{\tzero}}$ corresponding to $I\in \{Ae_jA \,\vert \, j=1,2, \} \cup \{I_\lambda \, \vert \,\lambda \in \Bbbk^\times\}$  is determined by $$\Hom_{\bfI(\bullet)}(A\otimes_\Bbbk A, A\otimes_\Bbbk A) \cong A\otimes_\Bbbk I.$$

Let now $\bfI$ be the ideal corresponding to some $\lambda \in \Bbbk^\times$ and consider $(\bfC_\bfI^{\preceq})^\circ$ as a pretriangulated $2$-representation of $\cC^\circ = \cC_A$. Put $X=A\otimes_\Bbbk A$ and $e=e_1\otimes e_1$. Then $X_e \cong Ae_1\otimes_\Bbbk e_1 A$ and $e_1\otimes e_1$ is not in $\bfI$ (since $e_1\notin I_\lambda$). Then $\bfR_{X_e}^{\preceq, \cC^\circ}$ is given by
the thick closure of $A\otimes_\Bbbk e_1 A$ and the ideal $\bfI_e$ corresponds to the maximal ideal of $e_1Ae_1$ generated by $e_1-\lambda ba$. However, $(\bfR_{X}^{\preceq})^\circ$ is given by the thick closure of $A\otimes_\Bbbk A$ inside of $\cC_A$.

Since $e$ is not in $\bfI$, the inclusion of $\bfC_{\bfI_e}^\preceq$ into $(\bfC_{\bfI}^\preceq)^\circ$ is fully faithful. 
Further observe that 
$1\otimes a \colon A\otimes e_1A \to A\otimes e_2A $ is an isomorphism in $(\bfC_{\bfI}^\preceq)^\circ$ with inverse $1\otimes \lambda b$. Thus, $\bfC_{\bfI_e}^\preceq=(\bfC_{\bfI_e}^\preceq)^\circ$ and $(\bfC_{\bfI}^\preceq)^\circ$ are dg equivalent. 
\end{example}

The following corollary shows that, under a mild technical condition, a weak two-sided cell is the apex of all  cell $2$-representations associated to its left cells.

\begin{corollary}\label{biggercellskill}
Let $\L$ and $\bfI$ be as in the statement of Lemma \ref {biggercellskill1} and let $\J$ be the weak two-sided cell containing $\L$. Further assume that not every product $\rX\rY$ for $\rX,\rY\in \J$ is strictly greater than $\J$ in the weak two-sided order. Then the weak apex of $\bfC_\bfI^\preceq$ is $\J$.
\end{corollary}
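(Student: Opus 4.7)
The plan is to invoke Proposition \ref{uniquemax}: since $\bfC_\bfI^\preceq$ is quotient-simple (as noted after Proposition \ref{cellinclude}), the quotient $\cC_{\bfC_\bfI^\preceq}$ possesses a unique maximal weak two-sided cell $\J_\text{apex}$. The task then reduces to showing $\J_\text{apex}=\J$, which I split into two parts: (i) $\J$ survives in $\cC_{\bfC_\bfI^\preceq}$, giving $\J_\text{apex}\succeq_J\J$; and (ii) no cell strictly above $\J$ acts non-trivially on $\bfC_\bfI^\preceq$, forcing $\J_\text{apex}\preceq_J\J$.

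For (i), I first observe that annihilation is uniform on $\J$: since every $\rG\in\J$ is $\sim_J\rF_0$, there exist factorisations $\id_{\rF_0}=p\circ\id_{\rH_1\rG\rH_2}\circ\iota$ and $\id_\rG=p'\circ\id_{\rK_1\rF_0\rK_2}\circ\iota'$ in $\cC$, which (applying $\bfC_\bfI^\preceq$) give $\bfC_\bfI^\preceq(\rG)=0$ if and only if $\bfC_\bfI^\preceq(\rF_0)=0$. Weak $\cC$-generation of $\bfC_\bfI^\preceq$ by $\rF_0\in\L$ together with the properness of $\bfI$ imply that $\id_{\rF_0}\notin\bfI(\tj)$, so $[\rF_0]$ is a non-zero object of $\bfC_\bfI^\preceq(\tj)$. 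Using the hypothesis, I pick $\rX,\rY,\rZ\in\J$ with $\rZ$ a dg indecomposable summand of $\rX\rY$, and use the idempotent-like structure this provides on $\J$ to argue that the $1$-morphism $\rF_0$ cannot act as the zero functor: the summand $\rZ\in\J$ of $\rX\rY$ forces the weak generation of $\bfC_\bfI^\preceq$ by $\rF_0$ to pass to the functor level, ruling out the degenerate scenario where $\rF_0$ is annihilated while $[\rF_0]$ remains non-zero.

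For (ii), suppose for contradiction $\J_\text{apex}\succ_J\J$. The proof of Proposition \ref{uniquemax} applied to $\J_\text{apex}$ shows that for any non-zero $X\in\bfC_\bfI^\preceq$ there is $\rA\in\J_\text{apex}$ with $\rA X\neq 0$. Taking $X=[\rF_0]$ (non-zero by (i)), some dg indecomposable summand $\rH$ of $\rA\rF_0$ survives; since $\rH\succeq_L\rF_0\in\J$ and $\rH\succeq_R\rA\in\J_\text{apex}$, maximality forces $\rH\in\J_\text{apex}$. On the other hand, by (i) combined with the hypothesis, for every non-zero $X$ there is $\rF\in\J$ with $\rF X\neq 0$; composing as in the proof of Proposition \ref{uniquemax}, one finds a surviving dg indecomposable summand $\rH'$ of $\rA\rF$ lying in $\J_\text{apex}$. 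Maximality of $\bfI$ combined with the hypothesis on products in $\J$ is then used to show that the cell of $\rH'$ in $\cC_{\bfC_\bfI^\preceq}$ coincides with that of $\rF$, yielding $\rH'\in\J$ and hence $\J\cap\J_\text{apex}\neq\emptyset$, contradicting $\J_\text{apex}\succ_J\J$.

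The main obstacle is step (i): bridging the readily-established object-theoretic fact $[\rF_0]\neq 0$ to the functor-theoretic requirement $\bfC_\bfI^\preceq(\rF_0)\neq 0$. The hypothesis on $\J$-products supplies precisely the idempotent-like structure on $\J$ needed to ensure $\rF_0$ remains active as a $1$-morphism in the quotient $\cC_{\bfC_\bfI^\preceq}$, rather than collapsing to the zero functor; without it, one could have $\bfC_\bfI^\preceq(\ti_\L)=0$ and the weak apex would be strictly greater than $\J$.
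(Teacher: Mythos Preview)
Your two-step outline matches the paper's, but part (ii) has a genuine gap, and the decisive tool the paper uses is absent from your argument.

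The paper does \emph{not} recycle the Proposition~\ref{uniquemax} argument to handle cells strictly above $\J$. Instead it exploits the one datum you never invoke: by Lemma~\ref{biggercellskill1}, we may take $X\in\L$ with \emph{local} endomorphism ring. For any $Y\succ_J X$, form the ideal $\bfI'$ in $\bfR_\L^\preceq$ generated by $\bfI$ and $\id_Y$. If $\bfI'=\bfR_\L^\preceq$, we can write $\id_X = a\circ\id_{\rG Y}\circ b + g$ with $g\in\bfI$; properness of $\bfI$ forces $g$ to be a non-unit in the local ring $\End(X)$, hence a radical element, so $\id_X-g$ is invertible. Then $a\circ\id_{\rG Y}\circ b$ is invertible, giving $X\succeq_L Y$ and contradicting $Y\succ_J X$. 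So $\bfI'$ is proper, contradicting maximality of $\bfI$; therefore $\id_Y\in\bfI$. This yields annihilation of every cell strictly above $\J$.

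Your attempt at (ii) repeatedly conflates two meanings of ``survives'': when you pick a summand $\rH$ of $\rA\rF_0$ with $[\rH]\neq 0$, you have only shown $\id_\rH\notin\bfI$ as an \emph{object} of the cell $2$-representation; you cannot conclude $\rH$ is non-annihilated as a \emph{$1$-morphism} of $\cC$, so the step ``maximality forces $\rH\in\J_{\mathrm{apex}}$'' is unjustified. The same confusion recurs for $\rH'$, and your closing sentence (``Maximality of $\bfI$ combined with the hypothesis on products in $\J$ is then used to show\ldots'') is exactly where a real argument is required and none is supplied. Part (i) is also hand-wavy (the ``idempotent-like structure'' is not an argument), though the paper is equally brief there; once (ii) is proved via the local-ring argument, (i) becomes the easy direction and your instinct to use the product hypothesis here is correct.
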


\proof
Let $\tilde \J$ be the weak apex of $\bfC_\bfI^\preceq$.
The condition on $\J$ ensures that $\J$ is not annihilated by $\bfC_\bfI^\preceq$, and hence $\J \preceq_J \tilde\J$. We thus need to show that every two-sided cell $\J'$ strictly greater than $\J$ in the order $\preceq_J$ is annihilated by $\bfC_\bfI^\preceq$.

Let $X\in \L$ be chosen such the endomorphism ring of $X$ is local, which is possible by Lemma \ref{biggercellskill1}. Further, let $Y\in \J'$, so $Y\succ_J X$. In particular,  $\bfG_{\bfR_\L^\preceq}(Y)$ is a strict dg $2$-subrepresentation of $\bfR_\L^\preceq$. Let
$\bfI'$ be the dg ideal in $\bfR_\L^\preceq$ generated by $\bfI$ and $\id_Y$, and assume $\bfI'$ strictly contains $\bfI$. We claim that $\bfI'$ does not equal $\bfR_\L^\preceq$ and hence contradicts maximality of $\bfI$. For a contradiction assume that $\bfI'$ equals $\bfR_\L^\preceq$. This implies we can write $\id_X = a\circ \id_{\rG Y}\circ b + g$ for $g$ in the dg ideal $\bfI$.

Notice that $g$ being in $\bfI$ implies that $g$ is not a unit, and hence contained in the unique maximal ideal of $\End(X)$. Thus $\id_X-g$ is invertible, so $a\circ \id_{\rG Y}\circ b$ is invertible and $X\succeq_L Y$, which is a contradiction. Hence $\bfI=\bfI'$ and $Y$ is annihilated by $\bfC_\bfI^\preceq$.
\endproof

The weak cell combinatorics of a dg $2$-category are linked to the cell combinatorics of the underlying additive $\Bbbk$-linear category $[\cC]$ which is obtained from $\cC$ by forgetting the differential and grading on the spaces of $2$-morphisms. We denote by $[\rF], [\alpha]$ the image in $[\cC]$ of a $1$-morphism $\rF$, respectively, a $2$-morphism $\alpha$ in $\cC$. 

\begin{lemma}\label{weakadditivecells}
Let $\rF=\left(\bigoplus_{i=1}^s \rF_i, \alpha\right),\rG=\left(\bigoplus_{j=1}^t \rG_j, \beta\right)$ by $1$-morphisms in $\cC$, where all $\rF_i$ and $\rG_j$ have local endomorphism rings. Then $\rF\preceq_L \rG$ if and only if for any $j=1,\ldots,t$ there exist an $i=1,\ldots,s$ such that $[\rF_i]\leq_L [\rG_j]$.

In particular, if $\rF$, $\rG$ have local endomorphism rings as $\Bbbk$-algebras, then $\rF\preceq_L\rG$ if and only if $[\rF]\leq_L[\rG]$.

The same statements hold when replacing $L$ by $R$ or $J$ in the respective partial orders.
\end{lemma}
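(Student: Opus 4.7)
The plan is to exploit Remark \ref{explicit}, which reduces $\rF\preceq_L\rG$ to the existence of a single factorization $\id_\rG=p\circ\id_{\rH\rF}\circ \iota$ for some $1$-morphism $\rH$ in $\cC$ and degree-$0$ (not necessarily closed) $2$-morphisms $p,\iota$, and to compare such a factorization with the additive notion of $\leq_L$ in $[\cC]$. The key preliminary observation is that for a twisted complex $\rF=(\bigoplus_{i=1}^s \rF_i,\alpha)$, the matrix inclusions $\iota_i\colon \rF_i\to \rF$ and projections $p_i\colon \rF\to \rF_i$, although not dg in general, are legitimate degree-$0$ $2$-morphisms in $\cC$ satisfying $\sum_i \iota_i p_i=\id_\rF$ and $p_i\iota_l=\delta_{il}\id_{\rF_i}$. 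Hence in $[\cC]$ we obtain $[\rF]\cong \bigoplus_i [\rF_i]$, and analogously for $\rG$.

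For the forward direction, starting from $\id_\rG=p\circ\id_{\rH\rF}\circ \iota$, I precompose with $\iota_j$ and postcompose with $p_j$, and expand $\id_{\rH\rF}=\sum_i (\id_\rH\circ_0\iota_i)(\id_\rH\circ_0 p_i)$ to obtain
\[
\id_{\rG_j}=\sum_{i}\bigl(p_j p(\id_\rH\circ_0\iota_i)\bigr)\circ \id_{\rH\rF_i}\circ\bigl((\id_\rH\circ_0 p_i)\iota\iota_j\bigr).
\]
Passing to $[\cC]$, each summand becomes an element of $\End_{[\cC]}([\rG_j])$ factoring through $[\rH\rF_i]=[\rH][\rF_i]$. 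Since $\End_{[\cC]}([\rG_j])$ is local, its maximal ideal is closed under addition, so at least one summand must be a unit. This furnishes a split monomorphism $[\rG_j]\hookrightarrow [\rH][\rF_i]$, and hence $[\rF_i]\leq_L[\rG_j]$ in $[\cC]$.

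For the converse, for each $j$ I pick $i(j)$ with $[\rF_{i(j)}]\leq_L[\rG_j]$, which provides a $1$-morphism $\rH_j$ and degree-$0$ $2$-morphisms $\nu_j\colon \rG_j\to \rH_j\rF_{i(j)}$, $\pi_j\colon \rH_j\rF_{i(j)}\to \rG_j$ with $\pi_j\nu_j=\id_{\rG_j}$. Assembling $\rH=\bigoplus_j \rH_j$ in the pretriangulated $\cC$ (with matrix inclusions $\hat\iota_j$ and projections $\hat p_j$), I define
\[
\iota=\sum_j (\hat\iota_j\circ_0 \iota_{i(j)})\nu_j p_j,\qquad p=\sum_j \iota_j\pi_j(\hat p_j\circ_0 p_{i(j)}).
\]
Using orthogonality of $\hat\iota_j,\hat p_k$ and of $\iota_{i(j)},p_{i(k)}$, together with $\pi_j\nu_j=\id_{\rG_j}$, a short matrix computation yields $p\circ\iota=\sum_j \iota_j p_j=\id_\rG$, proving $\rF\preceq_L\rG$.

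The special case of local $\End_\cC(\rF)$ and $\End_\cC(\rG)$ follows by setting $s=t=1$, and the statements for $\preceq_R$ and $\preceq_J$ are obtained by entirely symmetric arguments using the right- and two-sided analogues of Remark \ref{explicit}. The main obstacle I anticipate is the Krull--Schmidt-style step in the forward direction: extracting from the sum expressing $\id_{[\rG_j]}$ a single invertible summand is essential and crucially relies on locality of $\End_{[\cC]}([\rG_j])$.
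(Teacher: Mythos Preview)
Your argument is correct and follows essentially the same route as the paper's proof: both directions hinge on the matrix inclusions/projections of the twisted complexes and the locality of $\End(\rG_j)$ to extract an invertible summand. The only cosmetic difference is that for the implication ``condition $\Rightarrow \rF\preceq_L\rG$'' the paper invokes Lemma~\ref{Rdifference} and the ideal $\bfI_{\bfP_\ti}(\id_\rF)$ to chain $\rF\preceq_L\rF_{i(j)}\preceq_L\rG_j$ and then sums the $\id_{\rG_j}$, whereas you build the single witness $(\rH,p,\iota)$ explicitly; these are the same computation unpacked. One tiny wording nit: in your verification of $p\circ\iota=\id_\rG$ you only need the orthogonality $\hat p_j\hat\iota_k=\delta_{jk}\id_{\rH_j}$, not any orthogonality of $\iota_{i(j)},p_{i(k)}$ (indeed $i(j)=i(k)$ may occur for $j\neq k$); once $j=k$ is forced, you just use $p_{i(j)}\iota_{i(j)}=\id_{\rF_{i(j)}}$.
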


\proof
Assume that for any $j=1,\ldots,t$ there exist an $i=1,\ldots,s$ such that $[\rF_i]\leq_L [\rG_j]$. 
This means that there exist $1$-morphisms $\rH_{ji}$ and $2$-morphisms $p_{ji}, \iota_{ji}$ such that $\id_{\rG_j}=p_{ji}\circ \id_{\rH_{ji}\rF_i} \circ \iota_{ji}$. By Lemma \ref{Rdifference}, this implies $\rF_i\preceq_L \rG_j$. 
Again by Lemma \ref{Rdifference}, $\rF\preceq_L \rF_k$ for every $k$. Hence $\rF\preceq_L \rG_j$ for any $j$, and $\id_{G_j}\in \bfI_{\bfP_\ti}(\id_\rF)$. Thus $\id_\rG=\bigoplus_{j=1}^t \id_{\rG_j}$ is in  $\bfI_{\bfP_\ti}(\id_\rF)$, as required.

Conversely, assume $\rF\preceq_L \rG$. Fix $\rG_j$. Since $\rG\preceq_L \rG_k$ for every $k$, we know that $\rF\preceq_L \rG_j$. Hence, by Lemma \ref{Rdifference}, we have $\id_{\rG_j}=p\circ \id_{\rH\rF} \circ \iota$ for some $1$-morphism $\rH$ and $2$-morphisms $p, \iota$.   Notice that $\id_{\rG_j}=p\circ \id_{\rH\rF} \circ \iota = \bigoplus_{k=1}^s p\circ \id_{\rH\rF_k} \circ \iota$. 
Since $\rG_j$ has a local endomorphism ring, one $p\circ \id_{\rH\rF_i} \circ \iota$ has to be an isomorphism. Hence for that $\rF_i$, we have  $\rF_i\preceq_L \rG_j$, and moreover, $[\rF_i]\leq_L [\rG_j]$.
\endproof

\begin{remark}
Observe that  the first part of the proof does not use locality of endomorphism rings. Therefore, for $1$-morphisms $\rF=\left(\bigoplus_{i=1}^s \rF_i, \alpha\right)$ and $\rG=\left(\bigoplus_{j=1}^t \rG_j, \beta\right)$, if for any $j=1,\ldots,t$ there exist an $i=1,\ldots,s$ such that $\rF_i\preceq_L \rG_j$, then $\rF\preceq_L \rG$.
\end{remark}

\subsection{Comparison to (non-dg) finitary \texorpdfstring{$2$}{2}-representation theory}\label{secfinitarycomp}

Let $\cD$ be a finitary $2$-category in the sense of \cite{MM1}. Then $\cD$ is a dg $2$-category concentrated in degree zero with the zero differential. Let $\cC=\ov{\cD}$ be its associated pretriangulated $2$-category. In particular, $\cC(\ti,\tj)$ is dg equivalent to the dg category of bounded complexes over $\cD(\ti,\tj)$ for any pair of objects $\ti,\tj$.

\begin{proposition}
The equivalence classes of (weak) dg cell $2$-representations of $\cC$ are in bijection with the equivalence classes of cell $2$-representations of $\cD$. 
\end{proposition}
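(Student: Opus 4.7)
The plan is to show that both sides of the bijection are naturally parametrized by the left cells of $\cD$ in the sense of \cite{MM1}, with the corresponding $2$-representations related via the pretriangulated hull $\overline{(-)}$.

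First, since $\cD$ is finitary, every indecomposable $1$-morphism of $\cD$ has local endomorphism ring. Hence by Lemma \ref{biggercellskill1}, every weak dg cell $2$-representation of $\cC = \overline{\cD}$ arises from a weak left cell $\L'$ of $\cC$ that contains some indecomposable $1$-morphism $\rF$ of $\cD$ (viewed as a degree-$0$ twisted complex). Lemma \ref{weakadditivecells} then shows that such weak left cells $\L'$ are in bijection with left cells $\L$ of $\cD$.

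Next, fix a left cell $\L$ of $\cD$, let $\L'$ denote the corresponding weak left cell of $\cC$, and pick $\rF\in\L$. The task is to show $|\MaxSpec(\bfR^{\preceq}_{\L'})|=1$ and to identify the resulting cell $2$-representation. By Theorem \ref{maxspecbij}, it is equivalent to show $|\MaxSpec(\bfR^{\leq}_{\rF})|=1$. The key step is the identification $\bfR^{\leq}_{\rF}\simeq \overline{\bfN_\L}$, where $\bfN_\L$ denotes the additive $2$-subrepresentation of the principal finitary $2$-representation of $\cD$ at $\ti$ generated by $\rF$, as in \cite{MM1}. This rests on the fact that the principal dg $2$-representation for $\cC$ is obtained from that of $\cD$ by taking pretriangulated hulls, together with compatibility of the $\cC$-action with twisted complexes. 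Once this identification is made, Lemma \ref{easyideals}\eqref{quotientpretri} yields a bijection between proper dg ideals of $\overline{\bfN_\L}$ and proper $2$-ideals of $\bfN_\L$ that preserves maximality. Since \cite{MM1} produces a unique maximal $2$-ideal $\bfJ$ of $\bfN_\L$ yielding the finitary cell $2$-representation $\bfC_\L=\bfN_\L/\bfJ$, we conclude $|\MaxSpec(\bfR^{\leq}_{\rF})|=1$, and the associated strong dg cell $2$-representation is $\overline{\bfN_\L/\bfJ}\simeq \overline{\bfC_\L}$ by applying Lemma \ref{easyideals}\eqref{quotientpretri} again.

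Equivalent MM1 cell $2$-representations of $\cD$ correspond to equivalent dg cell $2$-representations of $\cC$ under $\bfC_\L\mapsto \overline{\bfC_\L}$ via functoriality of the pretriangulated hull and Lemma \ref{2rep-ov}. Conversely, Proposition \ref{cellinclude} allows one to recover $\overline{\bfC_\L}$ from $\bfC^{\preceq}_{\bfI}$ for $\bfI$ the unique element of $\MaxSpec(\bfR^{\preceq}_{\L'})$, yielding the required bijection on equivalence classes. The main obstacle is making the identification $\bfR^{\leq}_{\rF}\simeq \overline{\bfN_\L}$ rigorous: one must verify that the pretriangulated $\cC$-subrepresentation generated by $\rF$ coincides with the pretriangulated hull of the additive $\cD$-subrepresentation generated by $\rF$. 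This should follow from the universal property of $\overline{(-)}$ and the construction of the $\cC$-action on twisted complexes, but careful bookkeeping is required to ensure that taking pretriangulated hulls commutes appropriately with passage to $2$-subrepresentations and their ideals.
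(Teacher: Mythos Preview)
Your approach is correct and shares the paper's overall architecture (reduce via Lemma~\ref{biggercellskill1} to weak left cells containing $\cD$-indecomposables, identify these with left cells of $\cD$, then invoke the classical uniqueness of the maximal ideal from \cite{MM1,MM2}), but the middle step is handled differently. The paper stays entirely on the weak side: it argues, using locality of endomorphism rings as in the proof of Lemma~\ref{biggercellskill1}, that any $\bfI\in\MaxSpec(\bfR^\preceq_\L)$ automatically contains $\id_\rG$ for every indecomposable $\rG$ strictly weakly greater than $\L$, so that $\bfI$ is completely determined by $\End_\bfI(\rF_1\oplus\cdots\oplus\rF_s)$ where $\rF_1,\ldots,\rF_s$ are the $\cD$-indecomposables in $\L$; uniqueness then follows directly from \cite[Section~6.2]{MM2} applied to the finitary $\cD$-subrepresentation generated by $\rF_1\oplus\cdots\oplus\rF_s$.

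You instead detour through the strong side via Theorem~\ref{maxspecbij}, which forces you to establish the identification $\bfR^\leq_\rF\simeq\overline{\bfN_\L}$. This is true, but as you correctly note, it needs the non-automatic check that $\overline{\bfN_\L(\tj)}$ is thick inside $\overline{\cD(\ti,\tj)}$ (it is, since the differential is zero and $\cD$ is idempotent complete, so dg idempotents on complexes split termwise within $\bfN_\L$); you also need that the ideal correspondence of Lemma~\ref{easyideals}\eqref{quotientpretri} respects $\cC$- versus $\cD$-stability, which again holds because the $\cC$-action on twisted complexes is componentwise. The paper's direct route avoids both verifications, at the cost of reusing the locality argument from Lemma~\ref{biggercellskill1}; your route is more structural and makes the relation to $\overline{\bfC_\L}$ explicit. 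Note that your final appeal to Proposition~\ref{cellinclude} only gives an inclusion $\bfC^\leq_\bfI\hookrightarrow\bfC^\preceq_{\bfI'}$, not an equivalence, so it does not by itself recover the weak cell $2$-representation from the strong one; the paper asserts (without further detail) that the weak cell $2$-representation equals the pretriangulated hull of the finitary one.
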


\proof
Let $\bfC_\bfI^\preceq$ be a weak dg cell $2$-representation associated to a weak left cell $\L$ of $\cC$ and $\bfI\in \MaxSpec(\bfR^{\preceq}_\L)$. As $\cC$ satisfies the assumptions from Section~\ref{locendosec}, by Lemma \ref{biggercellskill1}, we can assume without loss of generality that $\L$ contains an indecomposable $1$-morphism of $\cD$. Let $\rF_1,\dots, \rF_s$ be the indecomposable $1$-morphisms of $\cD$ contained in $\L$. It is easy to see that $\L_{\cD}:=\{\rF_1,\dots, \rF_s\}$ forms a left cell in $\cD$. Indeed, the $\rF_i$ are equivalent in $\cD$ and thus equivalent in $\cC$. Conversely, every component of a $1$-morphism $\cC$-generated by $\L$ is either equivalent to or bigger than the $\rF_i$ when considered as a $1$-morphism in $\cD$.

By locality of the endomorphism rings of $\rF_1, \dots, \rF_s$ implies that $\id_\rG$ is contained in $\bfI$ for any indecomposable $\rG$ strictly left greater than $\L$ in the weak order. The ideal $\bfI$ is thus uniquely determined by $\End_{\bfI}(\rF_1\oplus \cdots \oplus \rF_s)$. The finitary $2$-subrepresentation of $\cD$ of $\bfR_\L^\preceq$ generated by $\rF_1\oplus \cdots \oplus \rF_s$ has a unique maximal ideal \cite[Section~6.2]{MM2}, and hence $\bfI$ is uniquely determined as well. Thus we have a unique (weak) dg cell $2$-representation associated to $\L$, which bijectively corresponds to the unique cell $2$-representation of the left cell $\L_{\cD}$ of $\cD$ and is, in fact, equivalent to its pretriangulated hull.
\endproof

\begin{remark}
For multitensor (or multiring) categories in the sense of \cite{EGNO}, an analogue of the left order used for finitary $2$-categories can be defined. Given that multitensor categories are Krull--Schmidt, each left cell has a unique cell $2$-representation. Considering the associated pretriangulated $2$-category as above, an analogue of the above proposition thus holds in this case.
\end{remark}

\subsection{Triangulated cell structure}\label{sectriangcell}

Consider the homotopy $2$-representation $\bfK\bfP_\ti$ associated to the principal pretriangulated $2$-representation $\bfP_\ti$. 

\begin{definition}\label{def:tricells} Let $\rF\in \cC(\ti,\tj)$ and $\rG\in \cC(\ti,\tk)$.
We say $\rF$ is {\bf left triangulated less than or equal to} $\rG$ if $\bfG_{\bfK\bfP_\ti}(\rG)\subseteq \bfG_{\bfK\bfP_\ti}(\rF)$. If this is the case, we write $\rF\leq_{L}^\Delta\rG$.
\end{definition}

Recall that $\mathcal{S}(\cC)$ is the set of dg isomorphism classes of dg indecomposable $1$-morphisms in $\cC$ up to shift.
We say that $\rF$ and $\rG$  in $\mathcal{S}(\cC)$ are {\bf triangulated left equivalent}, and write $\rF\steq_L^\Delta\rG$, if both $\rF\leq_{L}^\Delta\rG$ and $\rG\leq_{L}^\Delta\rF$. The corresponding equivalence classes on $\mathcal{S}(\cC)$ are called {\bf  triangulated left cells}.

The following lemma is immediate from the definitions.

\begin{lemma}\label{leqthentrileq}
If $\rF\leq_{L}\rG$ then $\rF\leq_{L}^\Delta\rG$.
\end{lemma}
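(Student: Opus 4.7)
The plan is to reduce the claim to showing that $\rG$, viewed as an object of $\bfK\bfP_\ti(\tk)$, lies in $\bfG_{\bfK\bfP_\ti}(\rF)(\tk)$, and then to transport the witnessing containment from the pretriangulated setting to the homotopy setting via the homotopy functor $\K$. Unpacking the definitions, the hypothesis $\rF \leq_L \rG$ says $\bfG_{\bfP_\ti}(\rG)\subseteq \bfG_{\bfP_\ti}(\rF)$; in particular, $\rG\in \bfG_{\bfP_\ti}(\rG)(\tk)\subseteq \bfG_{\bfP_\ti}(\rF)(\tk)$, the latter being the pretriangulated thick closure in $\ov{\cC(\ti,\tk)}$ of the set $S=\{\rH\rF\,|\,\rH\in \cC(\tj,\tk)\}$, where $\rF\in \cC(\ti,\tj)$.

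Next I would invoke the standard compatibility of the homotopy functor with the thick closure: $\K\colon \ov{\cC(\ti,\tk)}\to \bfK\bfP_\ti(\tk)$ is the identity on objects, preserves shifts, sends cones of dg morphisms to distinguished triangles, and preserves dg direct summands (which descend to genuine direct summands in the homotopy category). Hence $\K$ carries any pretriangulated thick subcategory into the triangulated thick subcategory generated by the image of its generators. Applied to $\bfG_{\bfP_\ti}(\rF)(\tk)$ and the set $S$, this yields $\rG\in \bfG_{\bfK\bfP_\ti}(\rF)(\tk)$.

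To finish, I would note that $\bfG_{\bfK\bfP_\ti}(\rF)$ is closed under the $\cC$-action, so it also contains $\bfK\bfP_\ti(\rH')(\rG)=\rH'\rG$ for every $\rH'\in \cC(\tk,\tk')$; being triangulated and thick at each $\tk'$, it contains the triangulated thick closure of these objects, which is precisely $\bfG_{\bfK\bfP_\ti}(\rG)(\tk')$. The inclusion $\bfG_{\bfK\bfP_\ti}(\rG)\subseteq \bfG_{\bfK\bfP_\ti}(\rF)$ is exactly the statement $\rF\leq_L^\Delta \rG$. No real obstacle is anticipated; the only point of verification is the compatibility of $\K$ with the three closure operations, which is standard, and the author already signals this lemma as immediate.
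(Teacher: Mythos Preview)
Your proposal is correct and is precisely an unpacking of what the paper means by ``immediate from the definitions'': the paper gives no proof beyond that phrase, and your argument---transporting the thick closure containment $\rG\in\bfG_{\bfP_\ti}(\rF)(\tk)$ to the homotopy level via the compatibility of $\K$ with shifts, cones, and direct summands, then using $\cC$-stability and thickness of $\bfG_{\bfK\bfP_\ti}(\rF)$ to conclude $\bfG_{\bfK\bfP_\ti}(\rG)\subseteq\bfG_{\bfK\bfP_\ti}(\rF)$---is exactly the standard verification one would supply.
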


The above lemma implies that any triangulated left cell is a union of strong left cells.

\begin{remark}
Given the involvement of $2$-morphisms not annihilated by the differential in the definition of the weak order, an analogue of such a weak order using the homotopy $2$-representations does not make sense.
\end{remark}

The following gives a converse to Lemma \ref{leqthentrileq} under additional technical conditions.

\begin{proposition}
Assume that $\cZ(\cC)$ is locally Krull--Schmidt. Then for two non-acyclic dg indecomposable $1$-morphisms $\rF, \rG$,
$$\rF\leq_L^\nabla\rG \qquad \text{ if and only if } \qquad \rF\leq_L\rG.$$ 
\end{proposition}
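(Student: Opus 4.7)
\emph{Plan.} The ``if'' direction is exactly Lemma~\ref{leqthentrileq}, so the task reduces to the converse. Given $\rG\in \cC(\ti,\tk)$ with $\rF\leq_L^\Delta\rG$, the first step is to exhibit $\rG$ as a direct summand in $\bfK\bfP_\ti(\tk)$ of some dg object $\rX\in \bfG_{\bfP_\ti}(\rF)(\tk)$. Since $\bfP_\ti(\tk)=\ov{\cC(\ti,\tk)}$ is pretriangulated, the (non-thick) triangulated closure of $\{\rH\rF\}$ inside $\bfK\bfP_\ti(\tk)$ is realised by dg objects built from dg cones and shifts of compositions $\rH\rF$, all of which live in $\bfG_{\bfP_\ti}(\rF)(\tk)$; so $\rG$ lies in the thick triangulated closure of this set and hence is a $\bfK$-summand of such an $\rX$.

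The second step is Krull--Schmidt in the cycle category. The hypothesis, together with $\cC$ being pretriangulated, makes $\cZ(\bfP_\ti(\tk))$ Krull--Schmidt, so $\rX\cong \bigoplus_{i=1}^s \rX_i$ in $\cZ$ with each $\rX_i$ having local endomorphism ring. Thickness of $\bfG_{\bfP_\ti}(\rF)(\tk)$ keeps every $\rX_i$ inside it. In $\bfK\bfP_\ti(\tk)$, the acyclic $\rX_i$ become zero, while non-acyclic ones retain local $\End_{\bfK}$ as nonzero quotients of local rings. Since $\rG$ is non-acyclic with local $\End_{\cZ}(\rG)$, it is likewise indecomposable in $\bfK$, and Krull--Schmidt--Azumaya applied to $\rG\oplus \rG' \cong \rX \cong \bigoplus_i \rX_i$ in $\bfK$ forces $\rG\cong \rX_{i_0}$ in $\bfK$ for some non-acyclic $\rX_{i_0}$.

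The decisive step is to upgrade this homotopy equivalence to a genuine dg isomorphism. Choosing degree-zero cycles $f\colon\rG\to\rX_{i_0}$ and $g\colon\rX_{i_0}\to\rG$ that realise it, $fg-\id_{\rX_{i_0}}$ and $gf-\id_\rG$ are boundaries. Any boundary in $\End_{\cZ}$ has zero image in $\End_{\bfK}$, so if it were a unit in the local ring $\End_{\cZ}$ (i.e.\ not in the maximal ideal), its image in the \emph{nonzero} ring $\End_{\bfK}$ would be both zero and a unit, a contradiction. Hence $fg$ and $gf$ each equal the identity plus a radical element, so both are units, and $f$ is a dg isomorphism. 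Thus $\rG\in \bfG_{\bfP_\ti}(\rF)(\tk)$, giving $\rF\leq_L\rG$. The main obstacle is precisely this lifting: the non-acyclicity of $\rG$ (and hence of $\rX_{i_0}$) is exactly what forces $\End_{\bfK}\neq 0$ and rescues the argument from the classical failure of splitting idempotents from $\bfK$ back to $\cZ$.
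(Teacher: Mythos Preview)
Your proof is correct, but the paper's route is shorter. Both arguments hinge on the same key observation, namely that a boundary in the local ring $\End_{\cZ}$ of a non-acyclic indecomposable must lie in the radical (else its image in the nonzero quotient $\End_{\bfK}$ would be a unit equal to zero). The difference is where this is applied. You first decompose $\rX\cong\bigoplus_i\rX_i$ in $\cZ$ via Krull--Schmidt, invoke a Krull--Schmidt--Azumaya argument in $\bfK$ to match $\rG$ with some $\rX_{i_0}$, and then lift the homotopy isomorphism $\rG\cong_{\bfK}\rX_{i_0}$ to a dg isomorphism. The paper bypasses the decomposition of $\rX$ entirely: from the $\bfK$-splitting one chooses dg morphisms $p\colon\rX\to\rG$ and $\iota\colon\rG\to\rX$ with $p\iota=\id_\rG+\partial(f)$, notes that $\partial(f)$ is a non-unit in the local ring $\End_{\cZ}(\rG)$, and replaces $\iota$ by $\iota(\id_\rG+\partial(f))^{-1}$ to obtain a genuine dg retraction of $\rG$ from $\rX$; idempotent completeness of Krull--Schmidt categories then makes $\rG$ a dg summand. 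Thus the paper only uses locality of $\End_{\cZ}(\rG)$, whereas you also use the Krull--Schmidt property for $\rX$ and for the category $\bfK$. On the other hand, your first paragraph is more careful than the paper's in explaining why $\rG$ is a $\bfK$-summand of some dg object in $\bfG_{\bfP_\ti}(\rF)(\tk)$, rather than literally of some $\rH\rF$.
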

\begin{proof}
Let $\rF\in \cC(\ti,\tj)$, $\rG\in \cC(\ti,\tk)$.
By Lemma \ref{leqthentrileq}, $\rF\leq_L \rG$ implies $\rF\leq_L^\nabla\rG$.

To prove the converse, assume that $\rF\leq_L^\nabla\rG$. Then there exists a $1$-morphism $\rH\in\cC(\tj,\tk)$ such that $\rG$ is a direct summand of $\rH\rF$ in the homotopy category $\K(\cC(\ti,\tk))$. This implies that there exist dg morphisms $p\colon\rH\rF\to \rG$, $\iota\colon \rG\to \rH\rF$ such that $p\iota=\id_\rG+\del(f)$ for some endomorphism $f$ of $\rG$.

Since the $1$-morphism $\rG$ is dg indecomposable and $\Z(\cC(\ti,\tk))$ is Krull--Schmidt, the ring $\End_{\Z(\cC(\ti,\tk))}(\rG)$ of its dg endomorphisms
is local. As $\rG$ is not acyclic, $\del(f)$ cannot be a unit and hence is in the radical. Indeed, if $\del(f)$ is invertible with inverse $g$, a dg morphism, then 
$\id_{\rG}=\del(f)g=\del(fg)$, which contradicts $\rG$ being non-acyclic.

Thus, $(\id+\del(f))$ is invertible and 
$$p\iota(\id_\rG+\del(f))^{-1}=\id_\rG.$$
Thus, setting $\iota'=(\id_\rG+\del(f))^{-1}$, $\iota'p$ is a dg idempotent displaying $\rG$ as a dg direct summand of $\rH\rF$ in $\Z(\cC(\ti,\tk))$. Thus, $\rF\leq_L\rG$.
\end{proof}

\begin{corollary}\label{strongequalstriang}
If $\cZ(\cC)$ is locally Krull--Schmidt, then non-acyclic strong left cells and non-acyclic triangulated left cells coincide.
\end{corollary}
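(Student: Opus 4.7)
The plan is to derive this corollary directly from the preceding Proposition, which already establishes the key content on the level of the preorders. The corollary upgrades that statement from a comparison of preorders to a comparison of equivalence classes, so the task is essentially combinatorial once the preorder equivalence is in hand.

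First, I would unwind the definitions: a strong left cell is an equivalence class under $\steq_L$, where $\rF\steq_L\rG$ means $\rF\leq_L\rG$ and $\rG\leq_L\rF$; similarly a triangulated left cell is an equivalence class under $\steq_L^\Delta$. To say the non-acyclic cells of the two types coincide amounts to saying that, restricted to non-acyclic dg indecomposable $1$-morphisms (up to shift), the equivalence relations $\steq_L$ and $\steq_L^\Delta$ agree as subsets of $\mathcal{S}(\cC)\times\mathcal{S}(\cC)$.

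Next, by the preceding Proposition, under the hypothesis that $\cZ(\cC)$ is locally Krull--Schmidt, we have $\rF\leq_L^\Delta \rG$ if and only if $\rF\leq_L\rG$ for any pair of non-acyclic dg indecomposable $1$-morphisms $\rF,\rG$. Applying this equivalence in both directions $\rF\leq_? \rG$ and $\rG\leq_? \rF$ yields that $\rF\steq_L\rG$ if and only if $\rF\steq_L^\Delta\rG$ for any pair of such non-acyclic indecomposables. Hence the two equivalence relations coincide on the set of non-acyclic elements of $\mathcal{S}(\cC)$, and their equivalence classes, i.e.\ the non-acyclic strong and non-acyclic triangulated left cells, are identical sets.

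There is no genuine obstacle here beyond ensuring that the non-acyclicity hypothesis in the Proposition is compatible with speaking of ``non-acyclic cells'' on both sides: since being non-acyclic is a property of an individual dg indecomposable $1$-morphism (invariant under shifts and dg isomorphisms), it descends to a well-defined property of classes in $\mathcal{S}(\cC)$, so restricting each preorder to its non-acyclic part is unambiguous. The corollary then follows by the one-line argument above.
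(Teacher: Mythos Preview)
Your argument is correct and is exactly the deduction the paper intends; the corollary is stated there without proof, as an immediate consequence of the preceding proposition. Your final paragraph could be tightened by observing that acyclicity is constant on both strong and triangulated left cells (if $\rF$ is acyclic and $\rF\leq_L\rG$ or $\rF\leq_L^\Delta\rG$ then $\rG$ is acyclic, since acyclic objects are closed under thick closure), which is precisely what guarantees that a non-acyclic cell consists entirely of non-acyclic indecomposables and hence that your restriction argument goes through cleanly.
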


\section{The dg \texorpdfstring{$2$}{2}-category \texorpdfstring{$\cC_A$}{CA}}\label{CAsec}

\subsection{Definition of \texorpdfstring{$\cC_A$}{CA}}\label{CAdefsec}

Let $A=A_\tone\times\cdots \times A_\tn$ be a finite-dimensional dg algebra, where each $A_\ti$ is indecomposable as a $\Bbbk$-algebra. We further assume that we have a decomposition of $1_A = e_1+\cdots +e_s$ into primitive orthogonal idempotents such that each $e_i$ is annihilated by the differential. 

We define an equivalence relation on $\{e_1, \cdots, e_s\}$ by saying $e_i$ is equivalent to $e_j$ if there exist $a,b\in A$ with $ab=e_i$ and $ba=e_j$ and, moreover, $\del(a)=\del(b)=0$.
We let $\tE$ denote a set of representatives of equivalence classes in $\{e_1, \cdots, e_s\}$.

Let $\A_\ti$ be a small dg category dg equivalent to $\widehat{\{A_\ti\}}$ inside $A_\ti\dgmod$, and set $\A = \coprod_{\ti=\tone}^\tn \A_\ti$. Note that, by definition, $\A$ is a dg idempotent complete pretriangulated category.

\begin{definition}
We define $\cC_A$  as the dg $2$-category with 
\begin{itemize}
\item objects $\mathtt{1}, \dots, \mathtt{n}$ where we identify $\mathtt{i}$ with $\A_\ti$;
\item $1$-morphisms in $\cC_A(\ti,\tj)$ are functors from $\A_\ti$ to $\A_\tj$ dg isomorphic to tensoring with dg $A_\tj$-$A_\ti$-bimodules in the thick closure of $A_\tj\otimes_\Bbbk A_\ti$ and, additionally, $A_\ti$ if $\ti=\tj$;
\item $2$-morphisms all natural transformations of such functors.
\end{itemize}
\end{definition}

Observe that, by definition, $\cC_A$ satisfies the assumptions of Section \ref{locendosec}.

For future use, we also define the notation of $\rF_{\tj,\ti}$ for the functor given by tensoring with $A_\tj\otimes_\Bbbk A_\ti$ and $\rF_{e_k,e_l}$ for the functor given by tensoring with $A e_k\otimes_\Bbbk e_l A $.

We define the {\bf natural $2$-representation} $\bfN$ of $\cC_A$ as its defining action on $\A$, that is,  $\bfN(\ti) = \A_\ti$, $\bfN(\rF) = \rF$ for a $1$-morphism $\rF$, and $\bfN(\alpha) = \alpha$ for a $2$-morphism $\alpha$. By construction, $\cC_A$ is a pretriangulated $2$-category, and $\bfN$ is in $\cC_A\tworep$. It is cyclic with any $A_\ti\in \bfN(\ti)=\A_\ti$ as a $\cC$-generator.

\subsection{Weak cell structure of \texorpdfstring{$\cC_A$}{CA}}\label{secweakCA}

Set $$\tI =\{\ti\;\vert \;\tone\leq \ti\leq  \tn,  A_\ti \text{ is not semisimple as a $\Bbbk$-algebra}\}.$$

\begin{proposition}\label{weakcells}
The $2$-category $\cC_A$ has 
\begin{itemize}
\item a weak two-sided cell $\J_\tzero$ consisting of all dg indecomposable $1$-morphisms in the thick closure of $\{\rF_{\tj,\ti} \;\vert \;\tone\leq \ti,\tj\leq  \tn \}$ where $\rF_{\tj,\ti}$ is given by tensoring with $A_\tj\otimes_\Bbbk A_\ti$;
\item for each $\ti\in \tI$, a weak two-sided cell $\J_\ti$ containing $\one_\ti$, consisting of all dg indecomposable $1$-morphisms $\rG$ which admit $2$-morphisms $p\colon \rG\to \one_\ti$ and $\iota\colon \one_\ti\to\rG$ such that $p\iota = \id_{\one_\ti}$.
\end{itemize}
This yields an exhaustive list of pairwise distinct weak two-sided cells. 
\end{proposition}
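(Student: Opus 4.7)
The plan is to identify the dg indecomposable $1$-morphisms with local endomorphism rings, verify that each listed set is a single weak two-sided cell, and finally check distinctness and exhaustiveness.

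First, I would observe that every $1$-morphism in $\cC_A(\ti,\tj)$ is dg equivalent to tensoring with a bimodule in the thick closure of $\{A_\tj \otimes_\Bbbk A_\ti\}$, augmented by $A_\ti$ when $\ti = \tj$. Working at the level of plain $A$-bimodules (where Krull--Schmidt applies since $A$ is finite-dimensional), the indecomposable constituents of such bimodules are $A_\ti$ (giving $\one_\ti$) and $Ae_k \otimes_\Bbbk e_l A$ for primitive idempotents $e_k \in A_\tj, e_l \in A_\ti$ (giving $\rF_{e_k, e_l}$). All of these have local endomorphism rings, so Lemma \ref{weakadditivecells} reduces the task to determining the cell of these atoms and then extending to twisted complexes.

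Next, I would show all $\rF_{e_k, e_l}$ are pairwise weakly two-sided equivalent. Given primitive idempotents $e_k, e_l, e_{k'}, e_{l'}$, set $\rH_1 = \rF_{e_{k'}, e_k}$ and $\rH_2 = \rF_{e_l, e_{l'}}$. The bimodule underlying $\rH_1 \rF_{e_k, e_l} \rH_2$ is
$$Ae_{k'} \otimes_\Bbbk e_k A e_k \otimes_\Bbbk e_l A e_l \otimes_\Bbbk e_{l'} A,$$
which contains $Ae_{k'} \otimes_\Bbbk e_{l'} A$ as an $A$-bimodule summand via $\iota(a \otimes b) = a \otimes e_k \otimes e_l \otimes b$ and a retraction $p$ built from any $\Bbbk$-linear functionals $\phi\colon e_k A e_k \to \Bbbk$, $\psi\colon e_l A e_l \to \Bbbk$ with $\phi(e_k) = \psi(e_l) = 1$ (which exist trivially and need not be dg, as permitted for the weak order). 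Hence $\rF_{e_k, e_l} \sim_J \rF_{e_{k'}, e_{l'}}$, and Lemma \ref{weakadditivecells} establishes $\J_\tzero$ as a single weak two-sided cell.

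For $\J_\ti$ with $\ti \in \tI$, the inclusion into the weak cell of $\one_\ti$ is immediate by taking $\rH_1 = \rH_2 = \one_\ti$: both $\rG \preceq_J \one_\ti$ (via the given $p\iota = \id_{\one_\ti}$) and $\one_\ti \preceq_J \rG$ reduce to trivialities. The converse is the subtle step. I would first argue that $\rG \sim_J \one_\ti$ forces $\rG \in \cC_A(\ti,\ti)$, since otherwise some $\rH_i$ would have bimodule in the thick closure of $\{A \otimes_\Bbbk A\}$, in which $A_\ti$ cannot appear as a summand when $A_\ti$ is non-semisimple. Then, applying Krull--Schmidt at the level of plain $A_\ti$-bimodules, decompose $\rH_1, \rH_2$ as $A_\ti^m \oplus B_1'$ and $A_\ti^{m'} \oplus B_2'$ with $B_1', B_2'$ of ``type $\tzero$'' (direct sums of $A_\ti e_k \otimes_\Bbbk e_l A_\ti$). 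The composition $\rH_1 \rG \rH_2$ then equals $N^{mm'}$ (where $N$ is the bimodule of $\rG$) plus a type-$\tzero$ piece, since tensoring a type-$\tzero$ bimodule on either side with anything remains type-$\tzero$. As $A_\ti$ appears as summand of the total but not of the type-$\tzero$ piece, it must be a summand of $N$, giving $\one_\ti$ as a $\Bbbk$-linear summand of $\rG$.

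Finally, distinctness of $\J_\tzero$ from each $\J_\ti$ follows from the classical fact that $A_\ti$ is a summand of $A_\ti \otimes_\Bbbk A_\ti$ as an $A_\ti$-bimodule precisely when $A_\ti$ is separable (equivalently, semisimple over our base field); while $\J_\ti \neq \J_\tj$ for $\ti \neq \tj$ is immediate since they reside in different hom-categories. Exhaustiveness follows by cases: for $\ti \neq \tj$ every dg indecomposable lies in $\J_\tzero$; for $\ti = \tj$ with $\ti \notin \tI$ the bimodule $A_\ti$ is a summand of $A_\ti \otimes_\Bbbk A_\ti$, pulling $\one_\ti$ into $\J_\tzero$ along with the whole hom-category; and for $\ti \in \tI$, a dg indecomposable is in $\J_\ti$ or $\J_\tzero$ according to whether it does or does not admit $\one_\ti$ as a $\Bbbk$-linear summand. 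The main obstacle throughout is the converse direction for $\J_\ti$, which requires turning the formally weaker factorization $\id_{\one_\ti} = p \,\id_{\rH_1 \rG \rH_2}\, \iota$ (with arbitrary $\rH_1, \rH_2$) into a concrete splitting of $\rG$; the ideal property of type-$\tzero$ bimodules under tensor product, combined with Krull--Schmidt at the bimodule level, is what makes this possible.
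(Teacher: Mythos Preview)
Your proposal is essentially correct and arrives at the same conclusions, but the key step—the converse inclusion for $\J_\ti$—follows a genuinely different route from the paper. The paper works directly inside $\cC_A$: given a dg indecomposable $\rG$ with non-projective underlying bimodule $M$, it picks a twisted complex $M'$ having $M$ as dg summand, isolates the filtration subquotients of $M'$ isomorphic to $A_\ti$ via a (non-dg) idempotent $e$, and uses a radical argument ($p'\circ e\circ\iota'\notin\rad\End(M)$, else $M$ would be projective) to extract a split mono/epi between $\rG$ and $\one_\ti$. Your route instead passes to underlying $A$-bimodules, decomposes the witnesses $\rH_1,\rH_2$ in the factorisation $\id_{\one_\ti}=p\,\id_{\rH_1\rG\rH_2}\,\iota$ as $A_\ti^m\oplus B_j'$ with $B_j'$ projective, and uses that projective bimodules form a tensor ideal (within the relevant class) together with Krull--Schmidt to force $A_\ti$ to be a summand of $N$ itself. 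Your argument is more structural and avoids the radical computation; the paper's is more hands-on and stays closer to the twisted-complex presentation, never needing to decompose $\rH_1,\rH_2$.

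Two points of care. First, your claim that ``tensoring a type-$\tzero$ bimodule on either side with anything remains type-$\tzero$'' is false for arbitrary bimodules (e.g.\ $(Ae_k\otimes e_lA)\otimes_A N = Ae_k\otimes e_lN$ need not be projective if $e_lN$ is not projective as right module), but it \emph{is} true when ``anything'' lies in $\add\{A_\ti, A_\ti e_k\otimes e_l A_\ti\}$, which is exactly the case here since $N$ is the underlying bimodule of a $1$-morphism in $\cC_A(\ti,\ti)$. So the argument is correct in context, but the statement should be qualified. Second, your appeal to Lemma~\ref{weakadditivecells} to establish $\J_\tzero$ as a single cell only directly covers objects of the form $(\bigoplus\rF_i,\alpha)$, not their dg direct summands; the latter are handled by your exhaustiveness argument via the bimodule decomposition, so the logic is complete once the pieces are assembled in the right order. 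The paper faces the analogous issue and resolves it by a separate radical argument showing every dg indecomposable in the thick closure of $\{\rF_{\tj,\ti}\}$ lies in the cell of some $\rF_{e_r,e_q}$.
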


\proof
For $\ti\in \tI$, let $\J_\ti$ denote the weak two-sided cell containing $\one_\ti$ and $\J_\ti'$ the set of dg indecomposable $1$-morphisms $\rG$ is such that there exist $2$-morphisms $p\colon \rG\to \one_\ti$ and $\iota\colon \one_\ti\to\rG$ such that $p\iota = \id_{\one_\ti}$. By definition, $\rG\preceq_J \one_\ti$ for any $\rG\in \J_\ti'$. Moreover, this condition implies that $\rG\in \cC(\ti,\ti)$, so since $\id_\rG= \id_{\rG\one_\ti} = \id_{\one_\ti\rG}$, we further see that $\rG\succeq_J \one_\ti$. Hence, $\J_\ti'\subseteq \J_\ti$.

To prove $\J_\ti\subseteq \J_\ti'$, let $M'\in \ov{\{A_\ti,A_\ti\otimes A_\ti \}}$ and assume $M$ is a dg direct summand of $M'$, which is not projective as an $A$-$A$-bimodule. We need to show that $M$ defines a $1$-morphism in $\J_\ti'$. Indeed, assume $M$ is a dg direct summand of $M'$ defined by dg morphisms $p',\iota'$.
Then $M'$ cannot be a projective $A$-$A$-bimodule either so there exist filtration subquotients $M'_1,\ldots,M'_t$ of $M'$ which are dg isomorphic to $A_\ti$.
Let $e\colon M'\to M'$ be the, not necessarily dg, idempotent projecting onto the sum of $M'_1,\ldots,M'_t$. We claim that $p'\circ e\circ \iota'\notin \rad \End_{A-A}(M)$. Indeed, if it were, then 
$$\id_M-p'\circ e\circ \iota' = p'\circ\iota' -p'\circ e\circ \iota' = p'\circ( \id_{M'}-e)\circ \iota'$$
would be invertible and $M$ would be projective as an $A$-$A$-bimodule.

Since $p'\circ e\circ \iota'$ factors over $M'_1\oplus\ldots\oplus M'_t$, there exists $i\in \{1,\ldots, t\}$ such that the component maps $M'_i\to M$, and $M \to M'_i$ are a split monomorphism, respectively, a split epimorphism \cite[Section V.7, Exercise 7.(c)]{ARS}.

Now assume $\rG\in \J_\ti$. Let $M$ be a dg bimodule such that $\rG$ is isomorphic to $M\otimes_A -$. 
Being in $\J_\ti$, in particular means that $\rG\preceq_J \one_\ti$, i.e.\ that $\id_{\one_\ti}=p\circ\id_{\rH_1\rG\rH_2}\circ\iota$ for some $1$-morphisms $\rH_1,\rH_2$ and $2$-morphisms $\iota,p$. This implies that $M$ cannot be projective as an $A$-$A$-bimodule, and in particular, $\rG\in \cC(\ti,\ti)$, since for $\tk\neq \tj$, all $1$-morphisms in $\cC(\tj,\tk)$ are isomorphic to tensoring with projective $A$-$A$-bimodules. The split monomorphism and split epimorphism from the previous paragraph now yield $p\colon \rG\to \one_\ti$ and $\iota\colon \one_\ti\to \rG$ as claimed. Hence, $\J_\ti\subseteq \J_\ti'$ and the two sets are equal.

In particular, if $\tj\in \tI$ and $\tj\neq \ti$,  we have that $\one_\tj\not\preceq_J \one_\ti$ since $\one_\tj\notin \cC(\ti,\ti)$. Thus, the weak cells $\J_\ti$ and $\J_\tj$ are disjoint for $\ti,\tj\in \tI$ such that $\ti\neq \tj$. Hence we have seen that the union of all $\J_\ti$, for $\ti\in \tI$, contains all dg indecomposable $1$-morphisms that are dg isomorphic to tensoring with a non-projective $A$-$A$-bimodule.

Let $\J_\tzero$ denote the set consisting of all dg indecomposable $1$-morphisms in in the thick closure of $\{\rF_{\tj,\ti} \;\vert \;\tone\leq \ti,\tj\leq  \tn \}$. 
It remains to show that $\J_\tzero$ is a single weak two-sided cell containing all dg indecomposable $1$-morphisms $\rH$ not in any of the $\J_\ti$.

It follows from Lemma \ref{weakadditivecells} that all $\rF_{e_k,e_l}$ are contained in the same weak two-sided cell, which we denote by $\J$. Further, $\rF_{e_k,e_l}\not\preceq_J \one_\ti$ for $\ti\in \tI$. This follows, arguing similarly to above, using the fact that $\one_\ti$ does not correspond to tensoring with a projective $A$-$A$-bimodule. Thus $\J$ is disjoint from all $\J_\ti$, for $\ti\in \tI$. 

Moreover, $\bigoplus_{\ti,\tj\in \tI}\rF_{\tj,\ti}$ is dg isomorphic to the dg direct sum $\bigoplus_{k,l=1}^s \rF_{e_k,e_l}$. It follows that in the thick closure of $\{\rF_{\tj,\ti} \;\vert \;\tone\leq \ti,\tj\leq  \tn \}$ is equal to the thick closure of $\{\rF_{e_k,e_l}\,\vert\,k,l=1,\dots,s\}$. 

We first prove that $\J_\tzero\subseteq \J$. Let $\rG\in \J_\tzero$ and choose $\rG'$ in $\ov{\{\rF_{e_k,e_l}\,\vert\,k,l=1,\dots,s\}}$ such that $\rG$ is a nonzero dg direct summand of $\rG'$. Then there exist morphisms $p\colon \rG\to \rG',\iota\colon \rG'\to \rG$ such that $\id_{\rG}=p\circ \id_{\rG'}\circ \iota$. Note that there exists a filtration subquotient $\rF_{e_r,e_q}$ of $\rG'$, given by (not necessarily dg) morphisms $p'\colon \rG'\to \rF_{e_r,e_q}$ and $\iota'\colon \rF_{e_r,e_q}\to \rG'$ such that the composition
$p\circ\iota'\circ p'\circ\iota$ is not a radical morphism. Indeed, otherwise $\id_\rG$ would be a radical morphism forcing $\rG$ to be zero. Thus, $p'\circ \iota\circ p \circ \iota'$ is also not a radical morphism and, as $\rF_{e_r,e_q}$ has a local endomorphism ring, we conclude that $\rF_{e_r,e_q}\succeq_J \rG$. Since $\rG$ is in the thick closure of the $\rF_{e_k,e_l}$, which are all equivalent, we see that $\rG\succeq_J \rF_{e_r,e_q}$. Hence, we have shown that $\J_\tzero\subseteq \J$.

We have seen that the set of all dg indecomposable $1$-morphism of $\cC_A$ is the disjoint union of $\J_\tzero$ and the $\J_\ti$ for $\ti\in \tI$. Hence, $\J\subseteq \J_\tzero$ and $\J=\J_\tzero$. This completes the proof.
\endproof

By Lemma \ref{biggercellskill1}, in order to classify weak dg cell $2$-representations, it suffices to consider weak left  cells containing a $1$-morphism isomorphic to the identity or tensoring with $Ae_i\otimes e_jA$. The next corollary provides a complete list of these weak left cells, as well as their analogous weak right cells. Recall that $\tE$ denotes a set of representatives of equivalence classes in $\{e_1, \cdots, e_s\}$, defined at the beginning of this section.

\begin{corollary}\label{weakleft}
The following lists some distinct weak left and right cells of $\cC_A$.
\begin{enumerate}[(a)]
\item\label{weaklefta} The $\J_\ti$, for $\ti\in \tI$, are both weak left and right cells. 
\item\label{weakleftb} The weak two-sided cell $\J_\tzero$ contains distinct weak left cells $\L_{\tzero,e}$ for $e\in \tE$,  where $\L_{\tzero,e}$ contains $\{\rF_{e_k,e}\vert e_k\in \tE\}$.
\item\label{weakleftc} 
Similarly, the weak two-sided cell $\J_\tzero$ contains distinct weak right cells $\R_{\tzero,e}$ for $e\in \tE$ where $\R_{\tzero,e}$ contains $\{\rF_{e,e_k}\vert e_k\in \tE\}$.
\end{enumerate}
\end{corollary}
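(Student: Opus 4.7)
My plan for part (a) is to show that every $\rG\in\J_\ti$ (for $\ti\in\tI$) is weakly left equivalent to $\one_\ti$, so that $\J_\ti$ is a single weak left cell, and symmetrically a single weak right cell. Proposition \ref{weakcells} guarantees that each such $\rG$ lies in $\cC_A(\ti,\ti)$ and admits $2$-morphisms $p\colon \rG\to\one_\ti$, $\iota\colon \one_\ti\to\rG$ with $p\iota=\id_{\one_\ti}$, not necessarily dg. Taking $\rH=\one_\ti$, the factorisation $\id_{\one_\ti}=p\circ\id_{\one_\ti\rG}\circ\iota$ gives $\rG\preceq_L\one_\ti$, while $\id_\rG=\id_{\rG\one_\ti}$ (with $\rH=\rG$, $p=\iota=\id$) yields the reverse $\one_\ti\preceq_L\rG$. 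Since every weak left cell is contained in its ambient weak two-sided cell, $\J_\ti$ coincides with the weak left cell of $\one_\ti$; the weak right cell statement follows by the verbatim symmetric argument.

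For the first half of part (b), I fix $e\in\tE$ and prove $\rF_{e_k,e}\sim_L\rF_{e_l,e}$ for $e_k,e_l\in\tE$ via an explicit factorisation. I will take $\rH=\rF_{e_l,e_k}$, which is a valid $1$-morphism of $\cC_A$ since $Ae_l\otimes_\Bbbk e_kA$ is a summand of $A\otimes_\Bbbk A$. The composition $\rH\rF_{e_k,e}$ corresponds to the bimodule $Ae_l\otimes_\Bbbk e_kAe_k\otimes_\Bbbk eA$, and I will construct bimodule maps $\iota(a\otimes b)=a\otimes e_k\otimes b$ and $p(a\otimes x\otimes b)=\lambda(x)(a\otimes b)$, where $\lambda\colon e_kAe_k\to\Bbbk$ is any $\Bbbk$-linear form with $\lambda(e_k)=1$ (available since $e_k\neq 0$). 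Then $p\iota=\id$ gives $\id_{\rF_{e_l,e}}=p\circ\id_{\rH\rF_{e_k,e}}\circ\iota$, establishing $\rF_{e_k,e}\preceq_L\rF_{e_l,e}$. Note that $\iota,p$ need not be dg, in accordance with Remark \ref{explicit}. Swapping $e_k$ and $e_l$ yields the reverse inequality, and I will denote the common weak left cell by $\L_{\tzero,e}$.

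The crucial step is the distinctness in part (b), which I expect to be the main obstacle. My plan is to exploit the observation that any composition $\rH\rF_{e_k,e}$ corresponds to a bimodule $Ye_k\otimes_\Bbbk eA$, where $Y$ is the bimodule underlying $\rH$, and that this bimodule is a direct sum of copies of $eA$ as a right $A$-module. If $\rF_{e_k,e}\sim_L\rF_{e_l,e'}$, unpacking the definition of $\preceq_L$ shows that $Ae_l\otimes e'A\cong(e'A)^{\dim Ae_l}$ becomes a direct summand of some $(eA)^n$ as right $A$-modules. Applying Krull--Schmidt to finite-dimensional right $A$-modules then forces $e'A\cong eA$, from which I must deduce $e\sim e'$ in $\tE$; this last step requires care because the equivalence defining $\tE$ uses dg-closed elements, so I plan to upgrade the resulting $\Bbbk$-linear intertwiners to dg ones using $\del e=\del e'=0$ together with orthogonality of distinct primitive idempotents. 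Part (c) will be handled by the symmetric argument using the left $A$-module structure of $Ae_k\otimes_\Bbbk eA$.
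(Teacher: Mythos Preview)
Your arguments for part~(a) and for the equivalence claim in part~(b) are correct and match the paper's proof: the paper refers back to the first paragraph of the proof of Proposition~\ref{weakcells} for~(a), and for the equivalences in~(b) invokes Lemma~\ref{weakadditivecells} rather than writing out bimodule maps explicitly, but your factorisations are exactly what that lemma encodes.

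The genuine gap is in your plan for distinctness in~(b). Your Krull--Schmidt argument on right $A$-modules is sound and yields $e'A\cong eA$ as \emph{ungraded} right $A$-modules, i.e.\ ordinary conjugacy of $e$ and $e'$. But the equivalence relation defining $\tE$ requires $\del a=\del b=0$, which is a priori strictly finer, and your proposed upgrade ``using $\del e=\del e'=0$ together with orthogonality'' does not go through in general. For a concrete obstruction, take $A=M_2(\Bbbk[t]/t^2)$ with $\deg t=1$, $\del t=0$, $\del E_{11}=\del E_{22}=0$, $\del E_{12}=tE_{12}$, $\del E_{21}=-tE_{21}$. Then $e_1=E_{11}$ and $e_2=E_{22}$ are primitive with $\del e_i=0$ and are ordinarily conjugate via $E_{12},E_{21}$, yet every $\del$-closed element of $e_1Ae_2$ lies in $\Bbbk\cdot tE_{12}$ and so cannot participate in an invertible pair; hence $e_1\not\sim e_2$ in the sense defining $\tE$. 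Nevertheless, taking $\rH$ to be the identity $1$-morphism and using the (non-dg) $2$-morphisms induced by left multiplication by $E_{12}$ and $E_{21}$ already gives $\rF_{e_k,e_1}\sim_L\rF_{e_k,e_2}$. The paper's own proof is no more detailed on this point: it cites Lemma~\ref{weakadditivecells} and then asserts disjointness, which likewise reduces to the additive cell structure and hence only to ordinary conjugacy. So both arguments implicitly identify dg conjugacy with ordinary conjugacy; making this rigorous would require either an additional hypothesis on $A$ or dropping the condition $\del a=\del b=0$ from the definition of $\tE$.
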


\proof
Part~(a) is clear from the first paragraph of the proof of Proposition \ref{weakcells}.

By Lemma \ref{weakadditivecells}, all $1$-morphisms dg isomorphic to tensoring with  $Ae'\otimes eA$ for any idempotent $e'$ are in the same weak left cell, which we denote by $\L_{\tzero,e}$. Further, the weak left cells $\L_{\tzero,e}$ are disjoint. This proves Part (b) and Part (c) is proved analogously.
\endproof

\subsection{Classification of weak cell \texorpdfstring{$2$}{2}-representations of \texorpdfstring{$\cC_A$}{CA}}

To classify all weak cell $2$-representations, we first consider those with weak apex $\J_\tzero$. By Lemma \ref{biggercellskill1}, we can restrict to the dg cell $2$-representations associated to the weak left cells given in Corollary \ref{weakleft}.

\begin{proposition}\label{uniquecell2rep0}
For $e\in \tE$, there is a unique weak cell $2$-representation $\bfC^\preceq_{\L_{\tzero,e}}$ associated to $\L_{\tzero,e}$.
This cell $2$-representation is dg equivalent to $\bfN$ if $\del(\rad eAe)\subseteq \rad eAe$, and is acyclic otherwise.
\end{proposition}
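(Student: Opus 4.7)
The plan is to reduce the analysis of dg ideals in $\bfR^\preceq_{\L_{\tzero,e}}$ to the local dg algebra $eAe$ via the weak generator $\rF_{e,e}=Ae\otimes_\Bbbk eA\otimes_A(-)\in\L_{\tzero,e}$. A direct bimodule computation identifies $\End_{\cC_A}(\rF_{e,e})\cong(eAe)^{\op}\otimes_\Bbbk eAe$ as dg algebras, with $\id_{\rF_{e,e}}\leftrightarrow e\otimes e$ and tensor differential induced by $\del$ on $eAe$. By Lemma~\ref{2repidealonoverline}, any dg ideal of $\bfR^\preceq_{\L_{\tzero,e}}$ is determined by its restriction to the morphism spaces $\Hom(\rF\rF_{e,e},\rG\rF_{e,e})$, and analogous bimodule computations show that the relevant data in these spaces is governed by dg ideals of $eAe$.

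Next, I construct the canonical morphism of dg $2$-representations $\Psi_e\colon \bfR^\preceq_{\L_{\tzero,e}}\to\bfN$ sending $\rG\in \cC_A(\ti,\tj)$ to $\rG(Ae)\in \A_\tj$, where $\ti$ is the unique object with $e\in A_\ti$. Since $\rF_{e_k,e}(Ae)=Ae_k\otimes_\Bbbk eAe$, the essential image of $\Psi_e$ dg $\cC_A$-generates $\bfN$ in the good case below; on $\End(\rF_{e,e})$, $\Psi_e$ induces the dg injection $a\otimes b\mapsto a\otimes L_b\colon (eAe)^{\op}\otimes eAe\hookrightarrow(eAe)^{\op}\otimes\End_\Bbbk(eAe)$. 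In the case $\del(\rad eAe)\subseteq\rad eAe$, the radical $\rad eAe$ is a dg ideal, and as $eAe$ is local with $eAe/\rad eAe\cong\Bbbk$ (by primitivity of $e$), it is the unique maximal dg ideal of $eAe$. Via the reduction of the first paragraph, $\bfI_e:=\ker\Psi_e$ is then the unique element of $\MaxSpec(\bfR^\preceq_{\L_{\tzero,e}})$, and the induced morphism $\bfC^\preceq_{\bfI_e}\to\bfN$ is fully faithful by Lemma~\ref{quotientfaithful} and essentially surjective by generation, yielding $\bfC^\preceq_{\L_{\tzero,e}}\simeq\bfN$.

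In the case $\del(\rad eAe)\nsubseteq\rad eAe$, pick a homogeneous $r\in\rad eAe$ with $u:=\del(r)\in eAe\setminus\rad eAe$ a unit. Since $u$ is a cycle and $uu^{-1}=e$, the Leibniz rule forces $\del(u^{-1})=0$, whence $\del(ru^{-1})=uu^{-1}=e$. Consequently $\id_{\rF_{e_k,e}}=\del(e_k\otimes ru^{-1})$ in $\End(\rF_{e_k,e})\cong(e_kAe_k)^{\op}\otimes eAe$ for every $e_k\in\tE$, so the generating $1$-morphisms of $\bfR^\preceq_{\L_{\tzero,e}}$ are acyclic; since acyclic objects form a thick subcategory of any pretriangulated dg category, acyclicity propagates to the whole $2$-representation and hence to any quotient. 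Uniqueness in both cases reduces via the first paragraph to $eAe$ admitting a unique maximal proper dg ideal: any proper dg ideal is contained in $\rad eAe$ (as its complement contains a unit), so the sum of all proper dg ideals is itself proper and hence maximal.

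The main obstacle I anticipate is making the reduction in the first paragraph rigorous across all morphism spaces $\Hom(\rF\rF_{e,e},\rG\rF_{e,e})$ simultaneously, rather than only for $\End(\rF_{e,e})$, and verifying that acyclicity transfers from the concrete generators $\rF_{e_k,e}$ to all weakly $\cC_A$-generated objects in the bad case; once these are secured, both the uniqueness claim and the dichotomy follow from the explicit computations above.
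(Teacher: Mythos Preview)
Your reduction to dg ideals of $eAe$, the uniqueness argument via the sum of proper dg ideals, and the treatment of the acyclic case are all sound and closely parallel the paper's proof. The gap is in your construction of the equivalence with $\bfN$ via the evaluation morphism $\Psi_e$.

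The problem is that $\Psi_e$ is \emph{faithful}, so $\ker\Psi_e=0$. Indeed, on $\Hom(\rF_{e',e},\rF_{e'',e})\cong e''Ae'\otimes eAe$ the induced map $a\otimes b\mapsto R_a\otimes L_b$ into $\Hom_A(Ae'\otimes eAe,\,Ae''\otimes eAe)$ is injective (left multiplication by $b\neq 0$ on $eAe$ is nonzero since $b\cdot e=b$). But the paper shows the unique maximal ideal $\bfI$ satisfies $\Hom_{\bfI}(\rF_{e',e},\rF_{e'',e})\cong e''Ae'\otimes\rad_\del eAe$, which is nonzero whenever $eAe\neq\Bbbk$. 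Hence $\ker\Psi_e$ is not the maximal ideal, and $\bfR^\preceq_{\L_{\tzero,e}}/\ker\Psi_e=\bfR^\preceq_{\L_{\tzero,e}}$ is not quotient-simple in general.

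Relatedly, your appeal to Lemma~\ref{quotientfaithful} is misplaced: that lemma concerns a pretriangulated $2$-\emph{sub}representation $\bfN\subseteq\bfM$ and an ideal of $\bfM$, not an arbitrary morphism of $2$-representations. Even granting faithfulness of $\Psi_e$, fullness fails: on $\End(\rF_{e,e})$ the image is $(eAe)^{\op}\otimes eAe\subsetneq(eAe)^{\op}\otimes\End_\Bbbk(eAe)$ unless $eAe=\Bbbk$. So $\Psi_e$ does not induce an equivalence. A correct comparison with $\bfN$ must send $\rF_{e_k,e}$ to $Ae_k$ rather than to $\rF_{e_k,e}(Ae)=Ae_k\otimes_\Bbbk eAe$; checking that this is compatible with the $\cC_A$-action and with the quotient by $\bfI$ is the content of the external reference \cite[Theorem~6.12]{LM} the paper invokes.
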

\proof
Set $\L=\L_{\tzero,e}$ and $\bfR=\bfR_{\L_{\tzero,e}}^\preceq$.

Let $\bfI\in \Spec(\bfR)$. 
By Lemma \ref{easyideals}\eqref{idealonoverline} and Corollary \ref{weakleft}, morphism spaces in each $\bfI(\ti)$ are completely determined by the subspaces $\Hom_{\bfI(\ti)}(X,Y)$, for $X,Y\in \overline{\{\rF_{e',e} \vert e'\in \tE\}}$.
Composing with injections and projections,  $\bfI$ is therefore uniquely determined by the subspaces $\Hom_{\bfI(\ti)}(\rF_{e',e}, \rF_{e'',e})$ of $$\Hom_{\bfR(\ti)}(\rF_{e',e}, \rF_{e'',e})\cong e'Ae'' \otimes eAe.$$
We claim that under this dg isomorphism, $\Hom_{\bfI(\ti)}(\rF_{e',e}, \rF_{e'',e})$ is contained in $e'Ae''\otimes \rad eAe$. Indeed, applying $\rF_{e',e'}$ to a morphism not in $e'Ae''\otimes \rad eAe$ and using appropriate injections and projections, we obtain $\id_{\rF_{e',e}}$ in $\bfI(\ti)$.

Let $\rad_\del eAe$ be the sum of all proper ideals in $eAe$ which are closed under the differential. Then this is the unique maximal dg ideal in $eAe$. Since the morphism space $\Hom_{\bfI(\ti)}(\rF_{e',e}, \rF_{e'',e})$ needs to be closed under $\del$, it in fact needs to correspond to a subspace of $e'Ae''\otimes \rad_\del eAe$.

Consider $\bfI_{\L}=\sum_{\bfI \in \Spec(\bfR)}\bfI$. This is an ideal, and any 
$\Hom_{\bfI_{\L}(\ti)}(\rF_{e',e}, \rF_{e'',e})$ still corresponds to a subspace in $e'Ae''\otimes \rad_\del eAe$, hence $\bfI_\L$ is proper. It is thus the unique maximal dg ideal of $\bfR$ by construction. Since the ideal $\bfI$ given by 
$\Hom_{\bfI(\ti)}(\rF_{e',e}, \rF_{e'',e}) \cong e'Ae''\otimes \rad_\del eAe$ is a dg ideal and is $\cC$-stable by construction, it is maximal by the above argument and hence equals $\bfI_\L$.

Assuming $\del(\rad eAe)\subset \rad eAe$, we have $\rad_\del eAe=\rad eAe$, and
the statement that in this case $\bfC^\preceq_\L$ is equivalent to $\bfN$ is proved analogously to \cite[Theorem 6.12]{LM} (see also \cite[Proposition 9]{MM5}).

Suppose $\del(\rad eAe)\not\subset \rad eAe$. Since the codimension of $\rad eAe$ in $eAe$ is one, there exists $x\in \rad eAe$ such that $e=\del(x)$. For any $e'\in\tE$, the identity on the image of $\rF_{e',e}$ in $\bfC^\preceq_\L(\ti)$ is then given by $\del(e'\otimes x) + e'Ae'\otimes \rad_\del eAe$. Hence $\bfC^\preceq_\L$ is acyclic in this case.
\endproof

\begin{corollary}
Let $e,e'\in \tE$ such that $\del(\rad eAe)\subset \rad eAe$ and $\del(\rad e'Ae')\subset \rad e'Ae'$. Then $\bfC_{\L_{\tzero,e}}^\preceq$ and $\bfC_{\L_{\tzero,e'}}^\preceq$ are dg equivalent. In particular, any weak cell $2$-representation with weak apex $\J_\tzero$ is either dg equivalent to $\bfN$ or acyclic.
\end{corollary}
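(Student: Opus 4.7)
The corollary is an immediate consequence of Proposition~\ref{uniquecell2rep0} combined with Lemma~\ref{biggercellskill1} and Corollary~\ref{weakleft}, so the plan is really just to assemble these ingredients carefully.

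For the first statement, the plan is to invoke Proposition~\ref{uniquecell2rep0} twice: since $\del(\rad eAe)\subseteq \rad eAe$, we obtain that $\bfC^\preceq_{\L_{\tzero,e}}$ is dg equivalent to $\bfN$, and similarly $\bfC^\preceq_{\L_{\tzero,e'}}$ is dg equivalent to $\bfN$ under the hypothesis on $e'$. Composing one equivalence with the inverse of the other yields the desired dg equivalence between $\bfC^\preceq_{\L_{\tzero,e}}$ and $\bfC^\preceq_{\L_{\tzero,e'}}$.

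For the \emph{in particular} statement, the plan is as follows. Let $\bfC$ be a weak cell $2$-representation with weak apex $\J_\tzero$. Since $\cC_A$ satisfies the assumptions of Section~\ref{locendosec}, Lemma~\ref{biggercellskill1} allows us to assume, up to dg equivalence, that $\bfC = \bfC^\preceq_\bfI$ for some weak left cell $\L$ containing a $1$-morphism of the form $\rF_{e_k,e_l}$ with local endomorphism ring and some $\bfI\in \MaxSpec(\bfR_{\L}^\preceq)$. Because the apex is $\J_\tzero$, the cell $\L$ lies in $\J_\tzero$, and Corollary~\ref{weakleft}\eqref{weakleftb} identifies $\L$ with some $\L_{\tzero,e}$ for $e\in\tE$. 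Proposition~\ref{uniquecell2rep0} then shows that $\bfC^\preceq_{\L_{\tzero,e}}$ is the unique weak cell $2$-representation associated to $\L_{\tzero,e}$ and provides the dichotomy: it is dg equivalent to $\bfN$ if $\del(\rad eAe)\subseteq \rad eAe$, and acyclic otherwise.

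There is no real obstacle here; the only small point requiring care is to confirm that the reduction supplied by Lemma~\ref{biggercellskill1} is compatible with the apex being $\J_\tzero$, which follows because the weak left cells containing any $\rF_{e_k,e_l}$ are precisely the $\L_{\tzero,e}$ of Corollary~\ref{weakleft}\eqref{weakleftb}, all of which lie in $\J_\tzero$.
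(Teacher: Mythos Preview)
Your proposal is correct and follows exactly the approach the paper intends; the paper in fact states this corollary without proof, treating it as immediate from Proposition~\ref{uniquecell2rep0} together with the reduction afforded by Lemma~\ref{biggercellskill1} and Corollary~\ref{weakleft}, which is precisely what you spell out. One small tightening: Lemma~\ref{biggercellskill1} only guarantees that $\L$ contains a $1$-morphism with local endomorphism ring (which \emph{a priori} could be some $\one_\ti$), and it is your apex argument---implicitly invoking Corollary~\ref{biggercellskill}---that then forces $\L\subset\J_\tzero$ and hence $\L=\L_{\tzero,e}$; you handle this correctly in your final paragraph, but the phrasing in the main argument should reflect that order.
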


Now we turn to the weak cell $2$-representations with weak apex $\J_i$. For this, given a dg algebra $A$, we denote by $Z(A)$ its centre and observe that the latter is closed under the differential.

\begin{proposition}\label{uniquecell2repi}
For $\ti \in \tI$, there is a unique cell $2$-representation $\bfC^\preceq_\ti$ associated to the two-sided cell $\J_\ti$, which is also a left cell. The cell $2$-representation $\bfC^\preceq_\ti$ is not acyclic if and only if  $\rad_\del Z(A_\ti)  = \rad Z(A_\ti)$.

More specifically,  $\bfC^\preceq_\ti(\tj)=0$ for $\tj\neq \ti$. Moreover, if $\rad_\del Z(A_\ti)  = \rad Z(A_\ti)$, then $\bfC^\preceq_\ti(\ti)$ is dg equivalent to $\Bbbk\dgmod$.
\end{proposition}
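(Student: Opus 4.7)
The plan is to mirror the proof of Proposition~\ref{uniquecell2rep0}, with $Z(A_\ti)$ playing the role of $eAe$. First, since $\one_\ti\in \J_\ti$ and $\id_\rH=\id_{\rH\one_\ti}\in\bfI_{\bfP_\ti}(\id_{\one_\ti})$ for every $1$-morphism $\rH$ starting at $\ti$, one immediately obtains $\bfR^\preceq_{\J_\ti}=\bfP_\ti$, so the task reduces to classifying $\MaxSpec(\bfP_\ti)$. By Lemma~\ref{2repidealonoverline}, any $\bfI\in\Spec(\bfP_\ti)$ is determined by the spaces $\Hom_{\bfI(\tj)}(\rF,\rG)$ for $\rF,\rG$ ranging over generators of each $\cC_A(\ti,\tj)$. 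The restriction of $\bfI$ to $\End_{\bfP_\ti(\ti)}(\one_\ti)=Z(A_\ti)$ is a proper dg ideal of the dg algebra $Z(A_\ti)$; since $A_\ti$ is indecomposable as a $\Bbbk$-algebra, $Z(A_\ti)$ is finite-dimensional local commutative, its maximal ordinary ideal is $\rad Z(A_\ti)$, and exactly as in the proof of Proposition~\ref{uniquecell2rep0} the unique maximal dg ideal $\rad_\del Z(A_\ti)$ of $Z(A_\ti)$ is the sum of all proper dg ideals. Hence $\bfI(\ti)\cap Z(A_\ti)\subseteq \rad_\del Z(A_\ti)$, with equality forced by maximality of $\bfI$.

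Next, following the pattern of the proof of Proposition~\ref{weakcells}, I would show that any maximal $\bfI$ must additionally contain $\id_\rH$ for every dg indecomposable $\rH\in\J_\tzero$ appearing in $\cC_A(\ti,\tj)$ for any $\tj$. The key point is that $\one_\ti$ is not a bimodule summand of any projective $A$-$A$-bimodule, so if $\id_{\one_\ti}$ were expressible as $p\circ\id_{\rG\rH\rG'}\circ\iota$ modulo an element of $\rad_\del Z(A_\ti)$ with $\rG\rH\rG'$ dg isomorphic to tensoring with a projective bimodule, then $\one_\ti$ would have to be a bimodule summand of that projective, a contradiction. Combining this with the first step pins down the unique maximal dg ideal $\bfI_\ti$ as the dg ideal generated by $\rad_\del Z(A_\ti)\subseteq \End(\one_\ti)$ together with the identities of all $1$-morphisms in $\J_\tzero$ reachable from $\ti$ and the full $\Hom$-spaces among them.

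Finally, I would analyse $\bfC^\preceq_\ti=\bfP_\ti/\bfI_\ti$. By construction $\bfC^\preceq_\ti(\tj)=0$ for $\tj\ne\ti$, while $\bfC^\preceq_\ti(\ti)$ is dg equivalent to the pretriangulated hull of a single object with dg endomorphism ring $R:=Z(A_\ti)/\rad_\del Z(A_\ti)$. When $\rad_\del Z(A_\ti)=\rad Z(A_\ti)$, $R$ equals $\Bbbk$ with vanishing differential, so $\bfC^\preceq_\ti(\ti)\simeq \Bbbk\dgmod$ and $\id_{\one_\ti}$ is not a coboundary, hence $\bfC^\preceq_\ti$ is not acyclic. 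If instead $\rad_\del Z(A_\ti)\subsetneq \rad Z(A_\ti)$, then picking $r\in\rad Z(A_\ti)$ with $\del(r)\notin \rad Z(A_\ti)$ and writing $\del(r)=\lambda\cdot 1+s$ with $\lambda\in\Bbbk^\times$, $s\in\rad Z(A_\ti)$, exhibits the unit $1+\lambda^{-1}s\in R$ as a coboundary; since $R$ is commutative, the differential of the inverse of a closed unit vanishes, so $\id$ itself is a coboundary in $R$ and $\bfC^\preceq_\ti$ is acyclic. The main obstacle is the uniqueness step in the second paragraph: formally extracting from maximality of $\bfI$ and non-projectivity of $\one_\ti$ that $\bfI$ absorbs every $\J_\tzero$-identity, thereby forcing the support of $\bfC^\preceq_\ti$ to collapse to $\{\ti\}$.
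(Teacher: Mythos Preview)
Your proposal is correct and follows essentially the same structure as the paper's proof. The one substantive difference is the step you flag as the ``main obstacle'': showing that any maximal $\bfI$ contains $\id_\rH$ for every $\rH\in\J_\tzero$ reachable from $\ti$. The paper dispatches this in one line by invoking Corollary~\ref{biggercellskill}: since $\one_\ti\one_\ti=\one_\ti\in\J_\ti$, the hypothesis of that corollary is satisfied, so the weak apex of $\bfC^\preceq_\bfI$ is $\J_\ti$, and every $1$-morphism in the strictly larger cell $\J_\tzero$ is annihilated. Your direct argument (non-projectivity of $A_\ti$ as a bimodule, plus locality of $\End(\one_\ti)$ to absorb the remainder term) is precisely the proof of Corollary~\ref{biggercellskill} specialized to this situation, so citing it removes the obstacle entirely.

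One small logical reordering is advisable: you assert the equality $\bfI\cap\End(\one_\ti)=\rad_\del Z(A_\ti)$ via maximality \emph{before} establishing that the $\J_\tzero$-identities lie in $\bfI$, but maximality alone does not give the inclusion $\rad_\del Z(A_\ti)\subseteq\bfI$ until you know that enlarging $\bfI$ by $\rad_\del Z(A_\ti)$ stays proper. The paper proceeds in the opposite order: first use Corollary~\ref{biggercellskill} to kill the $\J_\tzero$-part, then observe that $\bfI$ is determined by $\bfI\cap\End(\one_\ti)$ via Lemma~\ref{2repidealonoverline}, and finally invoke maximality to pin down this intersection as $\rad_\del Z(A_\ti)$.
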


\proof
By Corollary \ref{weakleft}\eqref{weaklefta}, $\J_\ti$ is indeed a left cell, which we denote by $\L_\ti$. Let $\bfI\in \MaxSpec(\bfR_{\L_\ti}^{\preceq})$. The weak left cells $\L_{\tzero,e}$ are strictly greater than $\L_\ti$ if and only if $e\in A_\ti$, and unrelated otherwise. Hence, for $e'\in A_{\tj}$,  $\rF_{e',e}\in \bfR_{\L_\ti}^\preceq(\tj)$ if and only if $e\in A_\ti$. By Corollary \ref{biggercellskill}, the weak apex of $\bfC^\preceq_\bfI$ is $\J_\ti$, which implies that $\id_{\rF_{e',e}}\in \bfI(\tj)$.

By Lemma \ref{2repidealonoverline}, the ideal $\bfI$ is determined by its intersections with morphism spaces involving the objects $\{\one_\ti, \rF_{e',e}\,\vert\, e'\in A, e\in A_\ti\}$. All morphisms with any of the $\rF_{e',e}$ as source or target are in the ideal $\bfI$. In particular $\bfC_\bfI^\preceq(\tj)=0$ for $\tj\neq \ti$. It hence suffices to determine $$\End_{\bfI(\ti)}(\one_\ti)\subseteq \End_{\bfR_{\L_i}^\preceq(\ti)}(\one_\ti)\cong \End_{A-A}(A_\ti)\cong Z(A_\ti).$$
The latter is a local ring, hence has a unique maximal dg ideal $\rad_\del Z(A_\ti)$, which under the above isomorphism contains $\End_{\bfI(\ti)}(\one_\ti)$. Maximality of $\bfI$ implies the other inclusion, which proves uniqueness of $\bfI$ and hence of the associated cell $2$-representation. We set $\bfC^\preceq_\ti = \bfC_\bfI^\preceq$.

If $\rad_\del Z(A_\ti)  \subsetneq \rad Z(A_\ti)$, similarly to the proof of Proposition \ref{uniquecell2rep0}, there exists an $x\in Z(A_\ti)$ such that $1_{A_\ti} = \del(x)$, so $\id_{\one_\ti}$ is in the image the differential, and hence all objects in $\bfC^\preceq_\ti(\ti)$ are acyclic. 

If $\rad_\del Z(A_\ti)  = \rad Z(A_\ti)$, it follows from the above considerations that $\bfC^\preceq_\ti(\ti)$ is dg equivalent to $\Bbbk\dgmod$.
\endproof

We summarise our results from Propositions \ref{uniquecell2rep0} and \ref{uniquecell2repi} in the following theorem.

\begin{theorem}\label{allweakcell2reps}
Each weak two-sided cell in $\cC_A$ has a unique associated weak cell $2$-representation, and this gives a complete list of weak cell $2$-representations.
\end{theorem}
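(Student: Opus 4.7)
The plan is that this theorem is essentially a summary/compilation of Propositions~\ref{uniquecell2rep0} and~\ref{uniquecell2repi} together with the enumeration of weak two-sided cells in Proposition~\ref{weakcells} and the reduction provided by Lemma~\ref{biggercellskill1}. I would not expect any substantial new argument to be required.

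First, I would invoke Proposition~\ref{weakcells} to list the weak two-sided cells of $\cC_A$ as $\J_\tzero$ together with the $\J_\ti$ for $\ti \in \tI$. Corollary~\ref{weakleft} then tells us that each $\J_\ti$ is simultaneously a weak left cell, while the weak left cells contained in $\J_\tzero$ are exactly the $\L_{\tzero,e}$ for $e\in \tE$. Since $\cC_A$ was set up so that each $\cC_A(\ti,\tj)$ is dg equivalent to the thick closure of $1$-morphisms with local endomorphism rings, the hypotheses of Section~\ref{locendosec} are satisfied, and Lemma~\ref{biggercellskill1} guarantees that every weak cell $2$-representation of $\cC_A$ is dg equivalent to one associated to one of these enumerated left cells. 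This gives completeness of the list.

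For uniqueness within each weak two-sided cell, I would appeal to the classification already established: Proposition~\ref{uniquecell2repi} directly handles the $\J_\ti$ case, producing the unique cell $2$-representation $\bfC^\preceq_\ti$. For $\J_\tzero$, Proposition~\ref{uniquecell2rep0} produces a unique $\bfC^\preceq_{\L_{\tzero,e}}$ for each $e\in\tE$, and the corollary immediately following that proposition identifies all these cell $2$-representations with a single dg equivalence class (either $\bfN$ or acyclic). To distinguish the resulting representations across different two-sided cells, I would remark that $\bfC^\preceq_\ti$ vanishes on $\bfC^\preceq_\ti(\tj)$ for $\tj \neq \ti$ by Proposition~\ref{uniquecell2repi}, whereas the cell $2$-representation attached to $\J_\tzero$ is nonzero on every object, either being equivalent to $\bfN$ or acyclic thereon.

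The main obstacle, if any, is a minor bookkeeping issue in the $\J_\tzero$ case: one must confirm that the different $\L_{\tzero,e}$ really yield a single dg equivalence class even in the acyclic regime, not only in the regime $\del(\rad eAe)\subseteq \rad eAe$ covered explicitly by the corollary to Proposition~\ref{uniquecell2rep0}. This should follow from the uniform construction via $\rad_\del$, since any two acyclic cell $2$-representations with the same apex and supported on the same family of objects coincide up to dg equivalence, but I would include a brief explicit check as the closing paragraph of the proof.
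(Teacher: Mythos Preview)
Your proposal is correct and matches the paper's approach exactly: the paper gives no separate proof for this theorem, presenting it explicitly as a summary of Propositions~\ref{uniquecell2rep0} and~\ref{uniquecell2repi}, and your write-up simply unpacks that summary using Proposition~\ref{weakcells}, Corollary~\ref{weakleft}, and Lemma~\ref{biggercellskill1} in the expected way. Your closing observation about the acyclic regime in $\J_\tzero$ is a legitimate subtlety that the paper does not address beyond the ``either $\bfN$ or acyclic'' dichotomy in the corollary following Proposition~\ref{uniquecell2rep0}, so in raising it you are already going slightly beyond what the paper provides.
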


\subsection{Classification of strong cell \texorpdfstring{$2$}{2}-representations of \texorpdfstring{$\cC_A$}{CA}}

By Theorem \ref{maxspecbij}, uniqueness of maximal ideals for weak left cells in $\cC_A$ implies that $\MaxSpec(\bfR_{\L}^{\leq})$ also has a unique element for any strong left cell $\L$. Hence, we can again denote the associated cell $2$-representation by $\bfC^\leq_\L$.

\begin{proposition}
Let $\L'$ be a weak left cell inside the weak two-sided cell $\J_0$ such that $\bfC^\preceq_{\L'}$ is not acyclic. Let $\L$ be a strong left cell inside $\L'$.
Then $\bfC^\leq_\L$ is either dg equivalent to  $\bfC^\preceq_{\L'}$ or acyclic.\end{proposition}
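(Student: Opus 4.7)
The plan is to leverage the dg equivalence $\bfC^\preceq_{\L'}\simeq \bfN$ from Proposition \ref{uniquecell2rep0} and analyse $\bfC^\leq_\L$ as a pretriangulated $2$-subrepresentation of the natural $2$-representation $\bfN$. Since $\bfC^\preceq_{\L'}$ is assumed non-acyclic, Proposition \ref{uniquecell2rep0} forces $\L'=\L_{\tzero,e}$ for some $e\in\tE$ with $\del(\rad eAe)\subseteq \rad eAe$ and supplies a dg equivalence $\Phi\colon \bfC^\preceq_{\L'}\xrightarrow{\sim}\bfN$, under which a $1$-morphism $\rG\in \cC_A(\ti_\L,\tj)$ (viewed as an object of $\bfR^\preceq_{\L'}(\tj)$ modulo the unique maximal ideal) is sent to $\rG(A_{\ti_\L})\in \A_\tj$. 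Since $\MaxSpec(\bfR^\preceq_{\L'})$ is a singleton, Theorem \ref{maxspecbij} gives $|\MaxSpec(\bfR^\leq_{\L})|=1$, so $\bfC^\leq_\L$ is unique, and Proposition \ref{cellinclude} provides a fully faithful embedding $\bfC^\leq_\L\hookrightarrow \bfC^\preceq_{\L'}$; composing with $\Phi$ yields a fully faithful $\Psi\colon \bfC^\leq_\L\hookrightarrow \bfN$. Fixing $\rF\in\L$, cyclicity of $\bfC^\leq_\L$ identifies the essential image of $\Psi$ with the pretriangulated $2$-subrepresentation $\N\subseteq \bfN$ that is $\cC_A$-generated by $Y:=\rF(A_{\ti_\L})$.

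If $Y$ is acyclic, then so is every object of $\N$, since the class of acyclic objects is preserved by dg functors and closed under shifts, cones, and dg direct summands; this yields the first alternative, namely that $\bfC^\leq_\L\simeq \N$ is acyclic. Suppose instead $Y$ is non-acyclic. Because the primitive idempotents $e_k$ are annihilated by $\del$, the decomposition $Y=\bigoplus_k e_k Y$ holds as dg $\Bbbk$-modules, so some $e_i Y$ is non-acyclic. Over the field $\Bbbk$, any bounded complex of finite-dimensional vector spaces is dg isomorphic to the sum of its cohomology (with zero differential) and a contractible complex, so I may write $e_iY\cong H^\ast(e_iY)\oplus C$ with $H^\ast(e_iY)\neq 0$ and $C$ contractible. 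Then, for any primitive idempotent $e'\in A_\tk$,
\[
\rF_{e',e_i}(Y)\;=\;Ae'\otimes_\Bbbk e_i Y\;\cong\;\bigoplus_n (Ae'\shift{-n})^{\oplus \dim H^n(e_iY)}\;\oplus\;(Ae'\otimes_\Bbbk C),
\]
which contains $Ae'\shift{-n}$, and hence $Ae'$ up to shift, as a dg direct summand. Since $\N$ is closed under the $\cC_A$-action, shifts, and direct summands, $\N(\tk)$ contains every $Ae'$ with $e'\in A_\tk$, hence contains $A_\tk=\bigoplus_{e'\in A_\tk} Ae'$, and therefore its thick closure $\A_\tk=\bfN(\tk)$. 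Thus $\N=\bfN$, so $\Psi$ is a dg equivalence and $\bfC^\leq_\L\simeq \bfC^\preceq_{\L'}$.

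The main obstacle is the non-acyclic case: recovering every indecomposable projective summand $Ae'$ of $A$ from the single non-acyclic object $Y$ via the $\cC_A$-action. The argument relies crucially on the field structure of $\Bbbk$ to split bounded finite-dimensional complexes as cohomology plus a contractible summand, combined with the fact that the primitive idempotents of $A$ are $\del$-cycles, which ensures that the idempotent decomposition of $Y$ is compatible with the differential.
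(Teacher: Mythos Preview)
Your proof is correct and follows essentially the same approach as the paper: embed $\bfC^\leq_\L$ into $\bfN$ via Proposition~\ref{cellinclude}, and in the non-acyclic case use the $\del$-compatible idempotent decomposition to extract a one-dimensional dg $\Bbbk$-summand and then act by $\rF_{e',e_i}$ to recover each $Ae'$. The only cosmetic difference is that you apply $\rF_{e',e_i}$ to $Y$ in a single step, whereas the paper first obtains $Ae$ and then applies a second functor $\rF_{e',e}$; your phrasing via $e_iY\cong H^\ast(e_iY)\oplus C$ is the same as the paper's choice of a one-dimensional dg indecomposable summand over a field.
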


\proof
By assumption, $\bfC^\preceq_{\L'}$ is equivalent to $\bfN$ and, by Proposition \ref{cellinclude}, $\bfC^\leq_\L$ can be considered as a dg $2$-subrepresentation of $\bfN$.

If all $X\in \bfC^\leq_\L(\ti)$ are acyclic, then $\bfC^\leq_\L$ is acyclic. So assume
that there exists a dg indecomposable $X\in \bfC^\leq_\L(\ti)$ which is not acyclic. 
Consider $X$ as a dg $\Bbbk$-module by forgetting the $A$-module structure. Recall that our chosen set of idempotents is annihilated by the differential. Hence, we can choose a decomposition of $X$ into dg indecomposable $\Bbbk$-modules such that, for each dg indecomposable direct summand $X_i$, there exists some $e_i$ such that $X_i=e_iX_i$.  Non-acyclicity  of $X$ implies that there exists a $1$-dimensional direct summand $S$, so let $e$ be such that $S=eS$. Then $Ae\otimes eA\otimes_AX$ has a dg direct summand given by $Ae\otimes S$ which is dg isomorphic to $Ae$. For any $e'\in \tE$, we then obtain $Ae'$ as a dg direct summand of $Ae'\otimes eA\otimes_A Ae$ by choosing the dg direct summand $Ae'\otimes e$. Thus $\bfC^\leq_\L$ is equivalent to $\bfN$.
\endproof

\begin{proposition}
Let $\L$ be a strong left cell inside the weak two-sided cell $\J_\ti = \L_\ti$ for some $\ti\in \tI$ such that $\rad_\del Z(A_\ti) = \rad Z(A_\ti)$. 
Then $\bfC^\leq_\L$ is either dg equivalent to  $\bfC^\preceq_{\L_\ti}$ or acyclic.\end{proposition}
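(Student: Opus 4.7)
The plan is to combine the fully faithful embedding $\bfC^\leq_\L \hookrightarrow \bfC^\preceq_{\L_\ti}$ furnished by Proposition \ref{cellinclude} (valid because Theorem \ref{maxspecbij} transfers the uniqueness of the maximal ideal of $\bfR^\preceq_{\L_\ti}$ established in Proposition \ref{uniquecell2repi} to $\bfR^\leq_\L$) with the explicit description of $\bfC^\preceq_{\L_\ti}$ from Proposition \ref{uniquecell2repi}, and then to dichotomize according to whether $\bfC^\leq_\L(\ti)$ contains a non-acyclic object.

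Under the hypothesis $\rad_\del Z(A_\ti)=\rad Z(A_\ti)$, Proposition \ref{uniquecell2repi} yields $\bfC^\preceq_{\L_\ti}(\tj)=0$ for $\tj\neq \ti$ and $\bfC^\preceq_{\L_\ti}(\ti)$ dg equivalent to $\Bbbk\dgmod$, sending $\one_\ti$ to $\Bbbk$. Transporting these facts across the fully faithful inclusion, $\bfC^\leq_\L$ is concentrated at $\ti$, and $\bfC^\leq_\L(\ti)$ sits inside bounded complexes of $\Bbbk$-vector spaces as a thick pretriangulated subcategory.

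If every object of $\bfC^\leq_\L(\ti)$ is acyclic, then $\bfC^\leq_\L$ is acyclic by definition. Otherwise, pick $X\in \bfC^\leq_\L(\ti)$ non-acyclic. Since $\Bbbk$ is a field, $X$ admits a dg decomposition into shifts $\Bbbk[n]$ and contractible atoms $\mathrm{cone}(\mathrm{id}_{\Bbbk[n]})$, and non-acyclicity forces at least one $\Bbbk[n]$ summand. Thickness of $\bfC^\leq_\L(\ti)$ together with closure under shifts then places $\Bbbk\simeq\one_\ti$ in $\bfC^\leq_\L(\ti)$. Since every $1$-morphism of $\cC_A(\ti,\ti)$ lies in the thick closure of $\{\one_\ti,\rF_{\ti,\ti}\}$ and $\rF_{\ti,\ti}$ is killed in $\bfC^\preceq_{\L_\ti}$ (as its summands $\rF_{e_k,e_l}$ have identities in the maximal ideal, by the proof of Proposition \ref{uniquecell2repi}), $\one_\ti$ thickly generates $\bfC^\preceq_{\L_\ti}(\ti)$. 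Hence $\bfC^\leq_\L(\ti)=\bfC^\preceq_{\L_\ti}(\ti)$, and the inclusion is a dg equivalence of $2$-representations.

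The key technical step is the atomic decomposition of a bounded complex of $\Bbbk$-vector spaces into shifts of $\Bbbk$ and of contractible cones, a standard fact over a field that transfers back through the equivalence $\bfC^\preceq_{\L_\ti}(\ti)\simeq \Bbbk\dgmod$; once this is combined with the thickness and shift-closure of $\bfC^\leq_\L(\ti)$ and the fact that $\one_\ti$ generates $\bfC^\preceq_{\L_\ti}(\ti)$, the dichotomy completes the proof.
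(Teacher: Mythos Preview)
Your proof is correct and follows essentially the same route as the paper, which merely remarks that the argument is ``analogous to, but easier than'' the preceding proposition: embed $\bfC^\leq_\L$ into $\bfC^\preceq_{\L_\ti}\simeq\Bbbk\dgmod$ via Proposition~\ref{cellinclude}, dichotomize on acyclicity, and in the non-acyclic case extract a one-dimensional dg summand to recover $\one_\ti$. Your explicit use of thickness (rather than acting by $1$-morphisms as in the $\J_\tzero$ case) is exactly the simplification the paper alludes to, since the projective bimodule functors $\rF_{e',e}$ act by zero on $\bfC^\preceq_{\L_\ti}$.
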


\proof
Note that under the assumption that $\rad_\del Z(A_\ti) = \rad Z(A_\ti)$, the proof of Proposition \ref{uniquecell2repi} shows that $\bfC^\preceq_{\L_{\ti}} (\ti)$ is dg equivalent to $\Bbbk\dgmod$ and $\bfC^\preceq_{\L_{\ti}} (\tj) =0$ for $\ti\neq \tj$.
The proof is then analogous to, but easier than, the proof of the previous proposition.
\endproof

\begin{corollary}\label{allstrongcell2reps}
Any strong cell $2$-representation of $\cC_A$ is either dg equivalent to a weak cell $2$-representation or acyclic. Hence, any non-acyclic strong cell $2$-representation of $\cC_A$ is either dg equivalent to the natural $2$-representation $\bfN$ or to $\bfC_\ti^{\preceq}$ as defined in Proposition~\ref{uniquecell2repi}.
\end{corollary}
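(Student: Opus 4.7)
The plan is to read off the corollary by combining the two preceding propositions with the classification of weak cell $2$-representations from Theorem \ref{allweakcell2reps}. The structural observation driving the argument is that every strong cell $2$-representation arises from a strong left cell $\L$, and $\L$ is necessarily contained in some weak left cell $\L'$; the behaviour of $\bfC^\leq_\L$ is then controlled by that of the ambient $\bfC^\preceq_{\L'}$ via Proposition \ref{cellinclude}.

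More precisely, I would proceed as follows. First, let $\bfC$ be a strong cell $2$-representation of $\cC_A$. By Definition \ref{def:cell2rep} there is a strong left cell $\L$ and an $\bfI\in\MaxSpec(\bfR^{\leq}_{\L})$ with $\bfC=\bfC^{\leq}_\bfI$. Pick a weak left cell $\L'\supseteq \L$; by Theorem \ref{maxspecbij} there is a unique $\bfI'\in\MaxSpec(\bfR^{\preceq}_{\L'})$ corresponding to $\bfI$, and Proposition \ref{cellinclude} exhibits $\bfC^{\leq}_\bfI$ as a pretriangulated $2$-subrepresentation of $\bfC^{\preceq}_{\bfI'}$. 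Now invoke Proposition \ref{weakcells} to split into cases according to the weak two-sided cell in which $\L'$ lies: either $\L'\subseteq \J_\tzero$, or $\L'=\L_\ti=\J_\ti$ for some $\ti\in \tI$ (using Corollary \ref{weakleft}\eqref{weaklefta} for the latter).

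In each case the desired conclusion is given by the immediately preceding propositions. If $\bfC^{\preceq}_{\L'}$ is acyclic, then the subrepresentation $\bfC^{\leq}_\bfI$ is acyclic too, as all of its objects are acyclic in the ambient dg $2$-representation and acyclicity is inherited. If $\bfC^{\preceq}_{\L'}$ is not acyclic, then in the case $\L'\subseteq\J_\tzero$ the first preceding proposition applies and yields $\bfC^{\leq}_\bfI\simeq \bfC^{\preceq}_{\L'}$ or acyclicity; in the case $\L'=\L_\ti$ the second preceding proposition, combined with the hypothesis $\rad_\del Z(A_\ti)=\rad Z(A_\ti)$ ensured by non-acyclicity of $\bfC^{\preceq}_{\L_\ti}$ (Proposition \ref{uniquecell2repi}), gives the same dichotomy. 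The first sentence of the corollary follows.

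For the second sentence, feed the result into Theorem \ref{allweakcell2reps}: a non-acyclic weak cell $2$-representation of $\cC_A$ is dg equivalent either to the natural $2$-representation $\bfN$ (when the associated weak apex is $\J_\tzero$ and $\del(\rad eAe)\subseteq \rad eAe$ by Proposition \ref{uniquecell2rep0}) or to some $\bfC^{\preceq}_\ti$ (when the apex is $\J_\ti$ for $\ti\in\tI$ by Proposition \ref{uniquecell2repi}). Since a non-acyclic strong cell $2$-representation is dg equivalent to a non-acyclic weak cell $2$-representation by the first sentence, the stated list is exhaustive. I do not anticipate a genuine obstacle here: the corollary is a bookkeeping consequence of the two preceding propositions together with the classification already established, and the only subtlety is to check that the various maximal ideal spectra under consideration are singletons so that the language of \emph{the} cell $2$-representation is unambiguous, which is exactly what Theorem \ref{maxspecbij} combined with Propositions \ref{uniquecell2rep0} and \ref{uniquecell2repi} delivers.
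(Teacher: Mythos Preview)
Your proposal is correct and matches the paper's approach: the corollary is stated without proof in the paper, being an immediate consequence of the two preceding propositions together with Theorem \ref{allweakcell2reps}, and your case analysis (including the observation that acyclicity of $\bfC^{\preceq}_{\L'}$ forces acyclicity of the subrepresentation $\bfC^{\leq}_\bfI$, which the paper records in the corollary following Proposition \ref{cellinclude}) is exactly what is intended.
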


\subsection{Explicit examples}

Assume first that $A$ is any finite-dimensional $\Bbbk$-algebra, i.e.\ a dg algebra with zero differential. Then $1$-morphisms in $\cC_A$ are given by bounded complexes of $A$-$A$-bimodules in the additive closure of $A\oplus A\otimes_\Bbbk A$. 
In particular, $\cC_A$ is the pretriangulated hull of the finitary $2$-category with the same name defined in \cite[Section 4.5]{MM3}, c.f.\ \cite[Section 5.2]{LM2}. For simplicity, assume $A$ is connected, i.e.\ $1$ is the only nonzero central idempotent. By Theorem \ref{allweakcell2reps} and Corollary \ref{allstrongcell2reps}, in this case, there are precisely two cell $2$-representations of $\cC_A$, neither of which is acyclic. 
\begin{itemize}
\item The first is given by the action on $\Bbbk\dgmod$, where projective bimodules act by zero, and the identity bimodule acts as the identity and any non-acyclic complex acts simply by the sum of (shifted) identities stemming from its identity bimodule components. 
\item The second is induced by the natural action of bounded complexes of the given $A$-$A$-bimodules on the category of bounded complexes of projective $A$-modules by tensor product.
\end{itemize}
Note that, in particular, all cell $2$-representations for $\cC_A$ are pretriangulated hulls of finitary cell $2$-representations.

One specific example of an algebra of interest in this context is the case is so-called zigzag algebra $Z=Z_n$ for a natural number $n$. Indeed, as explained \cite[Section 5.5]{LM2}, any pretriangulated $2$-representation of the pretriangulated dg category $\ov{\cB_n}$ naturally obtained from the categorification of the braid group in \cite{KS} extends to a pretriangulated $2$-representation of $\cC_Z$, since $\cC_Z$ is the dg idempotent completion of $\ov{\cB_n}$. We remark, however, that in contrast to $\cC_Z$, the dg $2$-category $\ov{\cB_n}$ only has one weak cell, namely that of the identity. By Corollary \ref{bijforindec}  and Theorem  \ref{allweakcell2reps}, the maximal spectrum of this cell is a singleton, giving rise to the trivial $2$-representation on $\Bbbk\dgmod$.  Hence by Proposition \ref{cellinclude}, the categorified braid group action of \cite{KS}, which is the restriction of the homotopy $2$-representation of the natural cell $2$-representation $\bfN$ of $\cC_Z$, cannot be constructed as (the homotopy $2$-representation of) a cell $2$-representation for $\ov{\cB_n}$.

Closely related to this, \cite{QS} constructs a $p$-differential on the Koszul dual $Z^!$ of the zigzag algebra, which, in the case of $p=2$ (and hence $\mathrm{char} \,\Bbbk =2$), gives rise to a dg algebra structure on $Z^!$, satisfying the assumptions of this section. Since none of the idempotents are in the image of the differential, Theorem \ref{allweakcell2reps} and Corollary \ref{allstrongcell2reps}, together with Propositions \ref{uniquecell2rep0} and \ref{uniquecell2repi}, imply that $\cC_{Z^!}$ (where we consider $Z^!$ as a dg algebra with this differential) again has two (non-acyclic) cell $2$-representations up to dg equivalence, the trivial one on $\Bbbk\dgmod$ and the natural $2$-representation $\bfN$. Indeed, the categorified Burau representation defined in \cite[Section~5.1]{QS} is contained in the homotopy $2$-representation of the natural cell $2$-representation of $\cC_{Z^!}$, in the sense of acting on the same category, just with fewer functors.

\end{document}